\def\be{\beta}
\def\ga{\gamma}
\def\de{\delta}
\def\Om{\Omega}    \def\om{\omega}
\newtheorem{claim}{Claim}
\newtheorem{theorem}{Theorem}
\newtheorem{maintheorem}{Theorem}
\newtheorem{corollary}[theorem]{Corollary}
\newtheorem{definition}[theorem]{Definition}
\newtheorem{example}[theorem]{Example}
\newtheorem{lemma}[theorem]{Lemma}
\newtheorem{proposition}[theorem]{Proposition}
\newtheorem{remark}[theorem]{Remark}
\theoremstyle{definition}
\newcommand{\comment}[1]{}
\newcommand\JM{\color{blue}}
\newcommand\RL{\color{red}}
\newdimen\AAdi%
\newbox\AAbo%
\def\AArm{\fam0 }
\def\AAk#1#2{\setbox\AAbo=\hbox{#2}\AAdi=\wd\AAbo\kern#1\AAdi{}}%
\def\AAr#1#2#3{\setbox\AAbo=\hbox{#2}\AAdi=\ht\AAbo\raise#1\AAdi\hbox{#3}}%
\def\BBone{{\AArm 1\AAk{-.8}{I}I}}%
\newcommand {\CA}{{\mathcal A}}
\newcommand {\CC}{{\mathcal C}}
\newcommand {\CM}{{\mathcal M}}
\newcommand{\disp}{\displaystyle}
\newcommand{\eps}{\varepsilon}
\newcommand{\8}{\infty}
\def\m1{{-1}}
\newcommand{\ol}{\overline}
\def\S{\Sigma}
\def\s{\sigma}
\def\l{\lambda}
\def\supp{\mbox{supp}\,}
\newcommand{\wt}{\widetilde}
\def\ie{{\em i.e.,\ }}
\def\ga{\gamma}
\newcommand{\R}{\mathbb{R}}
\title{On the selection of subaction and measure for perturbed potentials}
\author{R. Leplaideur\thanks{ISEA \& LMBA UMR6205, Université de la Nouvelle Calédonie} \,\, and\,\, J. K. Mengue\thanks{Departamento Interdisciplinar, Universidade Federal do Rio Grande do Sul} }
\begin{document}

\maketitle

\begin{abstract}
 We prove that when the Aubry set for a Lipschitz continuous potential is a subshift of finite type, then the pressure function converges exponentially fast to its asymptote as the temperature goes to 0. The speed of convergence turns out to be the unique eigenvalue for the matrix whose entries are the \emph{costs} between the different irreducible pieces of the Aubry set. \\
 For a special case of Walter potential we show that pertubation of that potential that go faster to zero than the pressure do not change the selection, nor for the subaction, neither for the limit measure a zero temperature.
\end{abstract}

Keywords : Ergodic optimization, selection, zero temperature. 

MSC2020: 37D35, 37A60


\section{Introduction}
\subsection{Background}

In this paper we study the question of \emph{selection in ergodic optimization}. Given a dynamical system $(X,T)$, ergodic theory describes \emph{almost all} orbits, with respect to some/any \emph{invariant probability}. The thermodynamic formalism is a way to select particular invariant measure(s): one fixes some potential $A:X\to \R$ and one considers the measure(s) maximizing  the free energy 
$$P(\be\cdot A):=\sup_{\mu\ T-inv}\left\{h_{\mu}(T)+\be\cdot\int A\,d\mu\right\},$$
where $h_{\mu}(T)$ is the Kolmogorov entropy and $\be$ is a real parameter, corresponding to the inverse of the temperature in statistical mechanics. Such a measure is called an equilibrium state (for $\be A$).
This formalism has been introduced in the Dynamical Systems settings in the 70's, mainly by Sinai, Ruelle and Bowen. 

Ergodic optimization is another way to select particular invariant measures. Instead of maximizing the free energy, one simply maximizes the integral of the potential: an invariant (probability) measure $\mu$ is $A$-maximizing if it satisfies{
$$m(A):=\sup_{\nu\ T-inv}\int A\,d\nu=\int A\,d\mu.$$}
Existence of equilibrium states usually follows from upper-semi-continuity for the entropy, whereas existence of maximizing measures simply follows from continuity on the compact convex set of invariant probabilities. 

Furthermore, there  is a relation between equilibrium states and maximizing measures: any accumulation point for the equilibrium state $\mu_{\be A}$ as $\be$ goes to $+\8$ is an $A$-maximizing measure. It is also known (see \cite{Contreras2016}) that if $(X,T)$ is uniformly hyperbolic, then generically in the Lipschitz topology for the potential, the family of equilibrium states $\mu_{\be A}$ converges to the unique $A$-maximizing measure which is supported on a periodic orbit. 

The question of \emph{selection in ergodic theory} deals with the residual case. In the case of existence of several maximizing measures, is there convergence for $\mu_{\be A}$ (as $\be\to+$) and/or what makes that some accumulation point\footnote{usually called a \emph{ground state} in statistical mechanics.} is selected instead of another one ? 

Despite residual sets may be considered as small, we emphasize that it is usually extremely easy to find $A$ which admits several maximizing measures: pick any 2 disjoints invariant compact subsets $K_{1}$ and $K_{2}$ and consider the potential $A:=-d(.\,, K_{1}\cup K_{2})$. 
The question of selection is thus meaningful. 

About selection, few is known. If $(X,T)$ is a subshift of finite type and if $A$ is locally constant, then convergence to a ground state as temperature goes to zero holds (see \cite{Bremont, Leplaideur}). { On the other hand, there are Lipschitz potentials where $\mu_{\beta A}$ does not converge (see \cite{Chazottes, BGT, CL}).}  In \cite{Leplaideur2} it is proved that {(under certain hyphothesis)} flatness of the potential closed to the \emph{Aubry set} determines which pieces may support ground states. { On the other hand, in \cite{BLM} it was proved that the knowledge of the potential in a neighborhood of the Aubry set is not sufficient to determine the result of selection.}
 In \cite{BLL-selc}, all the difficulty of the selection appeared: equilibrium states are usually obtained via an operator\footnote{called the Transfer or the Ruelle-Perron-Frobenius Operator.} and are constructed from its \emph{eigen-function} and  \emph{eigen-measure} (see below). Therein, it is shown that the eigen-measure selects one piece, whereas the eigen-function selects the other piece of the Auby set. Then, considering the value of the potential  ``in the middle'' as a parameter, the ground state may change as this parameter changes. A complete description of the limit depending on the parameter was given and the main point was that this selection process was strongly discontinuous. The main reason for that is that the phase diagram results from \emph{Max-Plus} formalism, explaining the discontinuities. 

Several properties and relations between equilibrium states and maximizing measures may be understood from convex analysis. The pressure function $\be\mapsto P(\be A)$ is convex, hence its slope is increasing. This is related to the fact that accumulation points for $\mu_{\be A}$ are $A$-maximizing. Furthermore, the graph of the pressure admits an asymptote as $\be\to+\8$ of the form $h_{max}+\be m(A)$, where {$h_{max}$ is the maximal entropy among $A$-maximizing measures.}
Some of these properties are re-explained below. We mention \cite{BKL} for further descriptions of how thermodynamic formalism, including freezing phase transition (that is as $\be\to+\8$) is related to convex analysis. 

\bigskip
Our first result (see Theorem \ref{theoA}) shows that if the Aubry set is  a subshift of finite type, then the pressure converges exponentially fast to its asymptote. In \cite{CG} an upper-bound in $O(1/\be^{2})$ was proved. Moreover, we show that the exponential rate is the unique eigen-value of a transition matrix within the Max-Plus formalism, where entries are related to the Peierl's barrier of the potential between the different pieces of the Aubry set.

\bigskip
Our second result (see Theorem \ref{theoB}) studies, in the case of Walters potentials, the stability of the selection under small perturbation of the potential. In that case Theorem \ref{theoA} holds, and we show that if the perturbation goes faster to zero than how the pressure goes to its asymptote, then the selection does not change.


\subsection{Settings and results}
In the following  $X\subseteq \{0,1,...,d\}^{\mathbb{N}}$ is a subshift of finite type given by an aperiodic matrix. $X$ equipped with the metric $d$ defined by 
\[d((x_{0},x_1,x_2,x_3,...),(y_{0},y_1,y_2,y_3,...)) = \theta^{\min\{i\,|\,x_i\neq y_i\}},\,\theta \in (0,1)\]
is a compact space. 
The shift map $\sigma:X \to X$ is defined by
\[\sigma(x_{0},x_1,x_2,x_3,...) = (x_{1},x_2,x_3,x_4,...).\]

In the special case $d=2$, the full-2-shift is denoted by $X_{2}$. 
Points in $X$ are also called \emph{infinite words}, hence $X_2$ is the set of all infinite words with 0 and 1. 
If $a_{0},\ldots, a_{n}$ are \emph{digits or letters} in $\{0,1,\dots, d\}$, the cylinder $[a_{0},\ldots ,a_{n}]$ is the set of points  $ x\in X$ such that $x_{i}=a_{i}$ for $0\le i\le n$. 

\bigskip
Given a Lipschitz continuous function $A:X \to \mathbb{R}$, we denote by $|A|_\infty=\sup_{x\in X} |A(x)|$ the supremum norm of $A$ and by $Lip(A) = \sup_{x\neq y} \frac{|A(x)-A(y)|}{d(x,y)}$ the Lipschitz constant of $A$.  The Lipschitz norm is $|\,.\, |_{\8}+Lip(.)$.

The transfer operator $L_A$ is defined by
\[L_{A}(\psi)(x) = \sum_{\sigma(z) =x}e^{A(z)}\psi(z).\]
We refer to \cite{Bowen} for complete study of this operator. It acts on $\CC(X)$ and on the space  $\CC^{0+1}(X)$ of Lipschitz continuous functions. Its spectral radius $\l_{A}$ (for $|\quad |_{\8}$-norm) is a single dominating eigenvalue on $\CC^{0+1}(X)$. It also turns out to be equal to $e^{P(A)}$. The associated 1-dimensional  eigen-space is $span(H_{A})$, where $H_{A}$ is Lipschitz continuous, positive and uniquely determined up to some  normalization. The dual operator $L_{A}^{*}$ for the $|\quad |_{\8}$-norm acts on the set of measures and $\nu_{A}$ is the unique probability satisfying $L_{A}^{*}(\nu_{A})=\l_{A}\nu_{A}$. It is referred to as the \emph{eigenmeasure} or the \emph{conformal measure}. 

The usual normalization for $H_{A}$ is $\disp\int H_{A}\,d\nu_{A}=1$, but for Theorem \ref{theoB} we shall choose another normalization.

\bigskip
We now recall some notion on ergodic optimization. We refer to \cite{BLL} for proofs and more details. 
The set of $\s$-invariant probabilities is denoted by $\CM(\s)$. We set 
$$m(A):=\max_{\mu\in\CM(\s)}\int A\,d\mu.$$
Any measure realizing this supremum is called $A$-maximizing or maximizing for $A$. 
We denote by $\CM_{max}(A)$ the set of $A$-maximizing measures. 

A   \emph{calibrated subaction} for $A$ is a Lipschitz continuous function $V:X\to\R$ satisfying for every $x\in X$ 
$$\max_{\sigma(z)=x} [A(z) + V(z) - V(x) - m(A)] = 0.$$
It is also known (see \cite{BLL} and Subsubsection \ref{sssec-calisuba1}) that any accumulation point for $\disp\dfrac1\be\log H_{\be A}$ (and for $|\quad|_{\8}$) is such a calibrated subaction. 
Setting $g(z):=A(z)+V(z)-V\circ \s(z)-m(A)$, we see that $g$ is a Lipschitz continuous function, cohomoulogous to $A$ (up to an additive constant) and non-positive. 
It thus satisfies that $m(g)=0$, $P(\beta g) = P(\beta A)-\beta m(A)$ and the unique equilibrium sate for $\be A$ is also the unique equilibrium state for $\be g$. Furthermore a $A$-maximizing measure is $g$-maximizing and conversely. 

Hence, without loss of generality one shall {\bf assume} in this paper that $A$ is non-positive and satisfies $m(A)=0$ (except when this assumption is not required for general results on ergodic optimization). 

We remind that $S_{n}(A)$ stands for the Birkhoff sum $A+\ldots +A\circ \s^{n-1}$.

\begin{definition}
\label{def-maneaubrymather}
With these notations and assumptions, the \textbf{Ma\~n\'e potential} associated to $A$ is defined by {
	$$S(x,y):=\lim_{\epsilon \rightarrow 0}\left[\sup\{S_n(A)(z),\ \s^{n}(z)=y,\ d(x,z)<\eps\}\right],$$
the \textbf{Aubry set} of $A$ is defined by
	$$\Omega:=\{x\in X\, |\, S(x,x) = 0\}$$ and} \textbf{Mather set}  is 
	$$\mathcal{M} = \cup_{\mu \in \mathcal{M}_{\max}(A)}\operatorname{supp}(\mu).$$
\end{definition}
We remind that $y\mapsto S(x,y)$ is Lipschitz continuous, whereas $x\mapsto S(x,y)$ is only upper semi-continuous. 
By definition it is clear that $S(x,y)$ is non-positive.

\begin{remark}
\label{rem-aubry}
The {Mather} set is non-empty (as it contains {the support of} any accumulation point for $\mu_{\be A}$) and is contained in the Aubry set (see \cite[Th 3.15]{BLL}). This shows that the Aubry set is not empty either. 
\end{remark}

\bigskip
With these settings our first main result is: 
\begin{maintheorem}\label{theoA}
Let $A$ be a non-positive Lipschitz continuous  function satisfying $m(A)=0$. Suppose that the Aubry set $\Omega$ is a subshift of finite type with topological entropy $h$. 
 Then  the limit 
\begin{equation}\label{gamma}
\gamma :=\lim_{\beta\to\infty}\frac{1}{\beta}\log(P(\beta A)-h)
\end{equation}
 exists. \end{maintheorem}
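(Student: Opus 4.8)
The strategy is to get matching upper and lower bounds on $P(\beta A) - h$ of the form $e^{\gamma \beta}$ up to sub-exponential factors, where $\gamma$ is identified via the Max-Plus (tropical) eigenvalue of a cost matrix between the irreducible pieces of $\Omega$. Since $\Omega$ is a SFT, it decomposes into finitely many irreducible components $\Omega_1,\dots,\Omega_k$ (plus transient behaviour between them), and the component of maximal topological entropy equals $h = h_{\mathrm{top}}(\Omega)$; call the maximal ones $\Omega_{i_1},\dots,\Omega_{i_r}$. Because $A$ is non-positive with $m(A)=0$ and $\Omega=\{S(x,x)=0\}$, the potential $A$ restricted to $\Omega$ is cohomologous to $0$, and all the "action" is paid when the orbit travels between pieces. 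The natural cost quantities are $c_{ij} = \sup\{S(x,y): x\in\Omega_i,\ y\in\Omega_j\}\le 0$ (the Peierls barrier between pieces $i$ and $j$), and $\gamma$ should be the Max-Plus eigenvalue $\max$ over cycles of average cost, restricted so the cycle passes through the entropy-maximal pieces.

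**Upper bound.** First I would bound $P(\beta A)$ from above. Using the variational principle, for any $\sigma$-invariant $\mu$, $h_\mu(\sigma) + \beta\int A\,d\mu$ must be estimated. The point is that if $\mu$ has entropy close to $h$, its support must spend "most" of its time near the entropy-maximal pieces of $\Omega$, but to have positive mass on an invariant measure it must also recur; one quantifies the entropy deficit of staying in a neighbourhood of $\Omega$ versus the $\beta$-weighted cost of the excursions. Concretely I would use the transfer operator: $e^{P(\beta A)} = \lambda_{\beta A}$, and estimate $\lambda_{\beta A}$ by iterating $L_{\beta A}$ on the constant function $1$, decomposing orbit segments of length $n$ according to which pieces $\Omega_{i}$ they shadow and for how long. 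Each excursion from $\Omega_i$ to $\Omega_j$ contributes a factor $\le e^{\beta c_{ij}} \cdot (\text{polynomial in length})$, while time spent in a maximal piece contributes $e^{nh}$ up to sub-exponential factors (Ruelle–Perron–Frobenius for the sub-SFT, with $A$ cohomologous to $0$ there). Counting configurations of excursions gives $\lambda_{\beta A} \le e^{h}\cdot e^{\gamma\beta}\cdot(\text{sub-exponential in }\beta)$, hence $\limsup \frac1\beta\log(P(\beta A)-h)\le\gamma$.

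**Lower bound.** For the matching lower bound I would construct, for each large $\beta$, an explicit invariant measure $\mu_\beta$ (or equivalently a lower bound on $L_{\beta A}^n 1$) concentrated on orbits that realize the optimal Max-Plus cycle: spend a long time $\sim N$ in the entropy-maximal piece(s), then make the cheapest excursion(s) realizing the cost-optimal cycle, and repeat periodically. Optimizing the fraction of time in the high-entropy part against the total cost paid per period yields $h_{\mu_\beta}(\sigma) + \beta\int A\,d\mu_\beta \ge h + e^{\gamma\beta}\cdot(\text{sub-exponential})$, giving $\liminf \frac1\beta\log(P(\beta A)-h)\ge\gamma$. Combining, the limit exists and equals $\gamma$.

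**Main obstacle.** The delicate point is the interplay between entropy and cost in a neighbourhood of $\Omega$: one must show that the only way to keep the free energy exponentially close to $h$ is to shadow the entropy-maximal pieces for a proportion of time tending to $1$, and that the correction term is governed precisely by the Peierls barriers $S(x,y)$ between pieces — not by some finer geometric feature of how the orbit approaches $\Omega$. This requires careful control of $S_n(A)$ for orbit segments with endpoints near $\Omega$ but not in $\Omega$, i.e. quantitative versions of the statements "$S(x,y)$ is Lipschitz in $y$" and "$x\mapsto S(x,y)$ is upper semi-continuous", uniformly enough to absorb errors into sub-exponential factors. A secondary technical nuisance is that $\Omega$ being a SFT does not immediately control the potential $A$ outside $\Omega$, so the reduction to a finite Max-Plus eigenvalue problem must be done with care, presumably using a finite cover of $X$ adapted to $\Omega$ and its complement, and letting the mesh shrink after $\beta\to\infty$.
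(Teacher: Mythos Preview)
Your identification of $\gamma$ as a Max-Plus eigenvalue is right in spirit, but the cost matrix is mis-specified. With $c_{ij}=\sup\{S(x,y):x\in\Omega_i,\ y\in\Omega_j\}$ you get $c_{ii}=0$ (take $x=y\in\Omega_i$; then $S(x,x)=0$), so the Max-Plus eigenvalue of $(c_{ij})$ equals $0$, whereas the correct $\gamma$ is strictly negative. The paper's entries are instead
\[
a_{ij}=\sup_{y\in\sigma^{-1}(\Sigma_i)\setminus\Sigma_i}\bigl[A(y)+S_j(y)\bigr],
\]
which force the path to actually \emph{leave} $\Sigma_i$ before landing in it; this is what gives $a_{ii}<0$ and hence a negative eigenvalue.

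More importantly, the paper's argument is not by matching combinatorial upper and lower bounds as you propose. Your lower bound sketch is where your plan would run into trouble: you need a measure $\mu_\beta$ with $h_{\mu_\beta}+\beta\int A\,d\mu_\beta\ge h+e^{\gamma\beta}\cdot(\text{sub-exp})$, i.e.\ an excess over $h$ that is exponentially small but positive, and a periodic-orbit-with-excursion construction does not obviously produce such a fine balance.

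The paper sidesteps this entirely with a spectral argument. For each maximal-entropy piece $\Sigma_i$, split the eigenfunction equation at $x\in\Sigma_i$ according to whether the preimage $y$ lies in $\Sigma_i$ (where $A=0$) or not, and integrate against the conformal measure $\nu_i$ for the zero potential on $\Sigma_i$. This yields the \emph{exact} identity
\[
(e^{P(\beta A)-h}-1)\,e^{h}\!\int H_{\beta A}\,d\nu_i=\int\!\!\sum_{\substack{\sigma(y)=x\\ y\notin\Sigma_i}}e^{\beta A(y)}H_{\beta A}(y)\,d\nu_i(x).
\]
Along any subsequence where $\frac1\beta\log(P(\beta A)-h)\to\gamma$ and $\frac1\beta\log H_{\beta A}\to V$, a Laplace-method lemma together with $V(y)=\max_j[V(\Sigma_j)+S_j(y)]$ converts this into the Max-Plus eigenvector equation $\gamma+V(\Sigma_i)=\max_j[V(\Sigma_j)+a_{ij}]$. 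Since the matrix $(a_{ij})$ is fixed and a real Max-Plus matrix has a \emph{unique} eigenvalue, every accumulation point of $\frac1\beta\log(P(\beta A)-h)$ equals that eigenvalue, so the limit exists. No separate upper and lower estimates are ever constructed.
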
 
 
 {In the proof of above theorem we show that $\ga$ is the unique eigenvalue in max-plus formalism of a matrix which is constructed from an analysis of the {Ma\~n\'e potential} between the irreducible components of $\Om$ with maximal entropy (see Prop. \ref{gamma equal entropies} below).}

\bigskip

{ We can now deal with the question of selection. The question we are interested in is to know how/if selection of subaction and measure are stable under perturbations. 
As we will see it is easy to exhibit examples where the selections for the family $\be A+B$ differ of the selections for the family $\be A$.  
Therefore, our question is{\JM:} which conditions on a family of functions $B_{\be}$ do ensure that $\mu_{\be A+B_{\be}}$ selects the same ground state than $\mu_{\be A}$ and $\frac{1}{\beta}\log(H_{\be A +B_{\be}})$ selects the same subaction than $\frac{1}{\beta}\log(H_{\be A})$  ?}

It turns out that we can give an answer in a special case of potentials for $X_{2}$. { For $i\in \{0,1\}$, let $f_i:[0,1]\to (-\infty, 0]$ be a Lipshitz function satisfying $f_i^{-1}(0)=\{0\}$ and let $A:\S_2\to (-\infty,0]$ be the potential defined by
	\[A(x)=\left\{\begin{array}{c} f_0( d(x,0^{\infty})),\, \text{if} \,x\in [0] \\ f_1( d(x,1^{\infty})),\, \text{if}\, x\in [1]\end{array}\right..\]
These potentials belong to the family which were introduced by  P. Walters in \cite{Walters}.} They represent a class of functions for which complete computations are possible, {which allows to make conjecture for more complicated cases}. We emphasize that in that case, the Aubry set is the union of two fixed points $\{0^{\8}, 1^{\8}\}$. It is thus a subshift of finite type with topological entropy equal to 0.  Hence, Theorem \ref{theoA} holds in that case. 

{We emphasize that our result gives another proof for the convergence of the Gibbs measures associated to these potentials (see \cite{BGT}) as the temperature goes to zero, for the case where convergence holds. }

\begin{maintheorem}\label{theoB} 
Consider a Lipschitz function $A:X_2 \to \mathbb{R}$ satisfying 
{\begin{equation}\label{W}
A(0^{\infty})=A(1^{\infty})=0,\,\,	A|_{[01]}=b,\,\,A|_{[10]}=d,\,\,  A|_{[0^n1]} =a_n,\,\,A|_{[1^n0]} =c_n,   
	\end{equation}}
for negative numbers $b,d,\{c_n\}_{n\geq 2}$ and $\{a_n\}_{n\geq 2}$. Assume $H_{\be A}$ is normalized by $H_{\be A}(0^{\8})=1$. 
Then, 

\begin{enumerate}
\item $\ga:=\lim_{\be\to+\8}P(\be A)$ exists and it is negative;
\item For any $\de<\ga$, for any family $(B_{\be})$ of Lipschitz functions such that $|B_\beta|_\infty<e^{\beta \delta}$ we have $$\lim_{\beta \to \infty} \mu_{\beta A+B_\beta} =\lim_{\beta \to \infty} \mu_{\beta A};$$ 

\item For any $\delta<\gamma$ and any $c\in\mathbb{R}$, for any family $(B_\beta)$ such that $|B_\beta|_\infty<e^{\beta \delta}$ and $Lip(B_\beta)<\beta c$ we have $$\lim_{\beta \to \infty}\frac{1}{\beta}\log(H_{\beta A+B_\beta})=\lim_{\beta \to \infty}\frac{1}{\beta}\log(H_{\beta A}).$$

\end{enumerate}
\end{maintheorem}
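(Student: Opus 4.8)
\medskip\noindent\textbf{Proof strategy.}
For item~1 I would invoke Theorem~\ref{theoA}: under \eqref{W} the Aubry set is $\Omega=\{0^\infty,1^\infty\}$, a trivial subshift of finite type with topological entropy $h=0$, so $\gamma:=\lim_{\beta\to\infty}\frac1\beta\log P(\beta A)$ exists. To see $\gamma<0$ I would use the description of $\gamma$ announced after Theorem~\ref{theoA} (Proposition~\ref{gamma equal entropies}): the irreducible components of $\Omega$ are the two fixed points, the off-diagonal entries of the associated cost matrix are $S(0^\infty,1^\infty)$ and $S(1^\infty,0^\infty)$, and \eqref{W} makes these explicit -- the cheapest excursion from $0^\infty$ to $1^\infty$ passes through $0^k1^\infty$ and costs $b+\sum_{n=2}^{k}a_n\to b+\sum_{n\ge2}a_n<0$, and symmetrically $S(1^\infty,0^\infty)=d+\sum_{n\ge2}c_n<0$ (the series converge because $A$ is Lipschitz). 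Since no cycle of positive length has cost $0$, the unique max-plus eigenvalue of the $2\times2$ cost matrix is the mean of its only cycle, $\gamma=\tfrac12\bigl(S(0^\infty,1^\infty)+S(1^\infty,0^\infty)\bigr)<0$.

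For items~2 and~3 I would exploit that for potentials of the form \eqref{W} the eigendata of $L_{\beta A}$ are semi-explicit. Set $\lambda_{\beta A}=e^{P(\beta A)}$ and normalize $H_{\beta A}(0^\infty)=1$. The equation $L_{\beta A}H_{\beta A}=\lambda_{\beta A}H_{\beta A}$ forces $H_{\beta A}$ to be constant on each cylinder $[0^n1]$ and $[1^n0]$ -- on such a cylinder the two $\sigma$-preimages stay in fixed cylinders where $A$ is constant -- so $H_{\beta A}$ is encoded by $g_n=H_{\beta A}|_{[0^n1]}$, $h_n=H_{\beta A}|_{[1^n0]}$ and $H_{\beta A}(1^\infty)$, which satisfy two coupled first-order linear recursions in $n$ together with the boundary relations $e^{\beta d}h_1=\lambda_{\beta A}-1$, $e^{\beta b}g_1=(\lambda_{\beta A}-1)H_{\beta A}(1^\infty)$ and the two limit conditions $g_n\to1$, $h_n\to H_{\beta A}(1^\infty)$; compatibility of this linear system is a characteristic equation pinning $\lambda_{\beta A}$, and solving the recursions (read from $n=\infty$, where the multipliers are $\le\lambda_{\beta A}^{-1}<1$) gives $(g_n),(h_n)$. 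The dual analysis for $L_{\beta A}^{*}$ gives $\nu_{\beta A}([0^n1]),\nu_{\beta A}([1^n0])$, hence $\mu_{\beta A}=H_{\beta A}\nu_{\beta A}/\!\int H_{\beta A}\,d\nu_{\beta A}$, all as explicit functions of $\lambda_{\beta A}$. The point to retain is that, by item~1, $\tfrac1\beta\log(\lambda_{\beta A}-1)\to\gamma$, and that $\lambda_{\beta A}-1$ is the only quantity smaller than $1$ occurring in these formulas (it sits in the denominators), so the outcome of the selection is governed by $\{\lambda_{\beta A}-1\}$ at the exponential scale $e^{\beta\gamma}$.

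Then I would run the perturbation. Since $|B_\beta|_\infty<e^{\beta\delta}\to0$, positivity gives $e^{-e^{\beta\delta}}L_{\beta A}\le L_{\beta A+B_\beta}\le e^{e^{\beta\delta}}L_{\beta A}$ on nonnegative functions, so $\lambda_{\beta A+B_\beta}=\lambda_{\beta A}\bigl(1+O(e^{\beta\delta})\bigr)$ and hence
\[
\lambda_{\beta A+B_\beta}-1=(\lambda_{\beta A}-1)\bigl(1+o(1)\bigr),
\]
the correction being $o(\lambda_{\beta A}-1)$ \emph{precisely because} $\delta<\gamma$, with the same leading behaviour as $\lambda_{\beta A}-1$. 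Writing $\phi_\beta=H_{\beta A+B_\beta}/H_{\beta A}$ (so $\phi_\beta(0^\infty)=1$), the eigenfunction equation says that $\phi_\beta$ is the dominant eigenfunction of the $O(e^{\beta\delta})$-size perturbation $\psi\mapsto\widehat L_{\beta A}(e^{B_\beta}\psi)$ of the normalized Markov operator $\widehat L_{\beta A}\psi=\frac1{\lambda_{\beta A}H_{\beta A}}L_{\beta A}(H_{\beta A}\psi)$, whose dominant eigendata is $(\mathbf 1,1)$ with invariant measure $\mu_{\beta A}$. Feeding $e^{B_\beta}=1+O(e^{\beta\delta})$ and the eigenvalue estimate into this -- and comparing $\nu_{\beta A+B_\beta}$ with $\nu_{\beta A}$ likewise -- I would obtain $\phi_\beta\to1$ and $d\nu_{\beta A+B_\beta}/d\nu_{\beta A}\to1$ at the points and cylinders controlling the selection, the gain $\delta<\gamma$ exactly offsetting the amplification by $(\lambda_{\beta A}-1)^{-1}$ (of order $e^{-\beta\gamma}$) coming from the exponentially small, metastable spectral gap. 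This gives $\mu_{\beta A+B_\beta}-\mu_{\beta A}\to0$ in the weak-$*$ topology, and since both families are tight with accumulation points in $\{0^\infty,1^\infty\}$, item~2 follows ($\lim\mu_{\beta A+B_\beta}=\lim\mu_{\beta A}$ whenever either limit exists). For item~3 I would additionally use $Lip(B_\beta)<\beta c$: the standard Ruelle/bounded-distortion bound gives $Lip(\log H_{\beta A+B_\beta})\le\mathrm{const}(\theta)\,Lip(\beta A+B_\beta)\le\mathrm{const}(\theta)\,\beta(Lip(A)+c)$, so the functions $\frac1\beta\log H_{\beta A+B_\beta}$ are equi-Lipschitz (hence, being normalized at $0^\infty$, uniformly bounded and precompact in $\CC(X_2)$, every accumulation point a calibrated subaction), while $\frac1\beta\log\phi_\beta\to0$ uniformly; together these force $\frac1\beta\log H_{\beta A+B_\beta}$ and $\frac1\beta\log H_{\beta A}$ to have the same accumulation points, i.e.\ item~3.

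The step I expect to be the main obstacle is making $\phi_\beta\to1$ uniform and quantitative. Although $B_\beta$ is small in the supremum norm, it may be \emph{large} in the Lipschitz norm ($Lip(B_\beta)<\beta c$ with $\beta c$ far exceeding the gap, which is of order $e^{\beta\gamma}$), so one cannot simply invoke analytic perturbation theory on $\CC^{0+1}(X_2)$ or a crude Birkhoff-cone estimate -- each loses a factor blowing up with $c$ -- and on $\CC(X_2)$ there is no gap to start from. The resolution is to stay inside the structure of \eqref{W}: the unperturbed eigendata are carried by the cylinder-constant profiles $(g_n),(h_n)$ and the two boundary relations, the perturbation enters them only through supremum-norm-size quantities ($e^{B_\beta}-1$ and $\lambda_{\beta A+B_\beta}-\lambda_{\beta A}$), and the sole amplification is division by $\lambda_{\beta A}-1$; keeping every comparison \emph{relative} (ratios, never differences) along the recursions, this contributes only the harmless $1+o(1)$ because $\delta<\gamma$. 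Carrying out this bookkeeping -- in particular controlling $H_{\beta A+B_\beta}$ near $1^\infty$ and on $[01],[10]$, where it is no longer cylinder-constant, and checking that the gap of $\widehat L_{\beta A}$ is indeed of order $e^{\beta\gamma}$ -- is where the real work lies; the rest is routine once the semi-explicit eigendata and $\tfrac1\beta\log(\lambda_{\beta A}-1)\to\gamma$ are in hand.
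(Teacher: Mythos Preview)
\medskip\noindent\textbf{On item 1.} Your invocation of Theorem~\ref{theoA} is correct, but your identification of $\gamma$ is not. The entries of the cost matrix are not $S(0^\infty,1^\infty)$ and $S(1^\infty,0^\infty)$: by the paper's definition $a_{ij}=\sup_{y\in\sigma^{-1}(\Sigma_i)\setminus\Sigma_i}\bigl(A(y)+S_j(y)\bigr)$, and here $\sigma^{-1}(0^\infty)\setminus\{0^\infty\}=\{10^\infty\}$, so for instance $a_{12}=A(10^\infty)+S(1^\infty,10^\infty)=d+\sum_{n\ge2}c_n$, not $b+\sum_{n\ge2}a_n$. More importantly, the diagonal entries $a_{11}=b+d+\sum a_n$ and $a_{22}=b+d+\sum c_n$ are finite (Lemma~\ref{lem-aijk}), so the max-plus eigenvalue is
\[
\gamma=\max\Bigl\{a+b+d,\ c+b+d,\ \tfrac{a+b+c+d}{2}\Bigr\},\qquad a:=\sum_{n\ge2}a_n,\ c:=\sum_{n\ge2}c_n,
\]
not merely the $2$-cycle mean $(a+b+c+d)/2$. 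Your conclusion $\gamma<0$ survives, but the specific value of $\gamma$ matters later (it governs which Dirac mass is selected in Proposition~\ref{prop:limits measure}).

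\medskip\noindent\textbf{On items 2 and 3.} Your route through spectral perturbation of $\widehat L_{\beta A}$ is genuinely different from the paper's, and the obstacle you flag is real: $B_\beta$ is small in $|\cdot|_\infty$ but may have $Lip(B_\beta)\sim\beta$, while the spectral gap of $\widehat L_{\beta A}$ on any Banach algebra where it has one is $\asymp e^{\beta\gamma}$, so standard perturbation theory loses. Your proposed fix (``stay inside the structure of~\eqref{W}, keep comparisons relative'') does not close the gap, because $H_{\beta A+B_\beta}$ is \emph{not} cylinder-constant for general $B_\beta$, so the recursion you set up for $H_{\beta A}$ simply does not apply to the perturbed eigenfunction; there is no bookkeeping to carry out. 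Likewise, the measures $\nu_{\beta A}$ and $\nu_{\beta A+B_\beta}$ are both concentrating on $\{0^\infty,1^\infty\}$, so a Radon--Nikodym ratio argument is delicate at best.

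The paper sidesteps both difficulties. For item~3 it never compares $H_{\beta A+B_\beta}$ with $H_{\beta A}$: it shows (Lemma~\ref{lemma2}) that \emph{every} accumulation point $\wt V$ of $\tfrac1\beta\log H_{\beta A+B_\beta}$ is a calibrated subaction satisfying the \emph{same} max-plus eigenvector equation $\gamma\otimes(\wt V(0^\infty),\wt V(1^\infty))^{\!\top}=M\otimes(\wt V(0^\infty),\wt V(1^\infty))^{\!\top}$ as $V$, and then uses that the eigenspace of $M$ is one-dimensional (Proposition~\ref{prop-autoespaco}) together with the normalization $\wt V(0^\infty)=V(0^\infty)=0$ and Lemma~\ref{mane} to conclude $\wt V=V$. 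For item~2 it first reduces (Lemma~\ref{cilinder}, a variational comparison between equilibrium states) to perturbations $B_\beta$ depending on a single coordinate, for which $\beta A+B_\beta$ is again of Walters type; then the explicit formula $\mu_{\beta A+B_\beta}([0])=S_0(\beta)/(S_0(\beta)+S_1(\beta))$ from \cite{BLM} is available, and a direct asymptotic analysis shows the limit is independent of $B_\beta$ and identifies it (Proposition~\ref{prop:limits measure}). No operator perturbation theory is needed in either case.
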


	\begin{remark} We emphasize that the perturbations $B_{\be}$ do not need  to satisfy similar hypothesis as $A$ in \eqref{W}.
		\end{remark}

{ \begin{remark}
\label{rem-nega}
Assumptions may be released. We let the reader check that we actually only need $A(0^{\infty})=A(1^{\infty})=0$ and:
\begin{enumerate}
\item $A$ non-positiv,
\item $b+d<0$, 
\item $a_{2}<0$, 
\item $c_{2}<0$.
\end{enumerate}

$\blacksquare$

%
%
\end{remark}}

For that special case of {potentials satisfying \eqref{W}}, existence of $\gamma$ follows from Theorem \ref{theoA} but has also been already proved in \cite{BLM, BGT}. Therein, existence of the limits $\lim_{\beta \to \infty} \mu_{\beta A}$ and $\lim_{\beta \to \infty}\frac{1}{\beta}\log(H_{\beta A})$ were also proved. 
Here,  one novelty is that we give another characterization for the limit {$V:=\lim_{\beta \to \infty} \frac{1}{\beta}\log(H_{\beta A})$ in terms} of max-plus formalism and the main novelty is that we give condition to get stability.

\subsection{Plan of the paper}

The paper is organized in the following way. 

In section \ref{sec: preliminary} we introduce a basic analysis of the problem of selection an some results that will be useful in the proof of Theorem \ref{theoA} and in the study of selection of subaction. 

In section \ref{secproofthA} we prove Theorem \ref{theoA}. The proof is divided in subsections in order to facilitate the reading. {Using the fact that a subaction is entirely determined by its values on the Aubry set, we obtain a relation between these values on the irreducible component of the Aubry set in the case it is a subshift of finite type. In this relation, the speed of convergence of the pressure turns out to be an eigenvalue for a special matrix.}

{In section \ref{sec: theoB} we prove Theorem \ref{theoB}. In the first subsection we use Theorem \ref{theoA} to identify $\ga$. Proposition \ref{prop-autoespaco} is the key point to get selection of the subaction.  In Proposition \ref{prop:limits measure} we show the convergences to limit measures for $\mu_{\beta A +B_\beta}$ and $\mu_{\beta A}$. 
In Section \ref{initial} we give a totally  computational example where $B_{\be}$ goes to zero slower than $e^{\be\cdot \ga}$ and then there is a change of selections. }

\subsection{Motivations}

 An initial motivation for the study of selection for perturbed potentials was given from the results in \cite{LMMS}. In such paper very similar characterizations concerning zero temperature limits for $\mu_{\beta A}$ and $H_{\beta A}$ was getting  when supposing $\tilde{X}=M^{\mathbb{N}}$, where $M$ is a compact metric space instead a finite set. In this case the formulation of the Ruelle operator uses a prior probability measure $m$ satisfying $\operatorname{supp}(m)=M$ and then
$L_{A,m}:C(\tilde{X})\to C(\tilde{X})$ is given by $L_{A,m}(\psi)(x)=\int_M e^{A(ax)}\psi(ax)\,dm(a)$. 

A natural question concerning this general setting is the dependence of the zero temperature limits with respect to the prior probability measure $m$ which is used to define the operator. The answer is obtained by considering this question in the particular case where $M=\{0,1,...,d\}$ and $\tilde{X}=\{0,1,...,d\}^{\mathbb{N}}$. For any prior probability $m=(p_0,...,p_d)$, where $p_i>0\,\forall i\in\{0,1,...,d\}$, the operator $L_{\beta A,m}$ can be written as
\[L_{\beta A,m}(\psi)(x)=\sum_{a=0}^{d}e^{\beta A(ax)}\psi(ax)p_{a} = \sum_{a=0}^de^{\beta A(ax)+\log(p_a)}\psi(ax),\]
for each $\beta>0$. So $L_{\beta A,m}$ is just the usual Ruelle Operator associated to the perturbed potential $\beta A(x)+\log(p_{x_1})$, where $x=(x_1,x_2,x_3,...)$. It is easy to construct examples (see Example \ref{example:initial} below) where such perturbation will change the results of selection of subaction and measure. Therefore, the conclusion is that selection of subaction and measure in the setting of \cite{LMMS} depends of the prior measure $m$. In the case of finite alphabet $\{0,1,...,d\}$ a natural next step is to study what are the conditions on the perturbation in order to get equal limits.  

\bigskip
{ A second motivation is to inquire how perturbation could enlarge the class of known potentials for which there is convergence. As it is said above, it is known that convergence occurs as soon as the potential is locally constant. In that case the Aubry set is a subshift of finite type. It is thus quite natural to inquire if, for a given potential, one can find some sequence of locally constant perturbations,, refining the initial potential,  with all the same Aubry set, and such that convergence for this family finally yields convergence for the initial potential. This is a research program, and one first step is to control how perturbations influence the selection.}

\section{Auxiliary  and preliminary results}\label{sec: preliminary}

\subsection{Convexity for the pressure and consequences}
{ 
Here we do not necessarily assume that $m(A)=0$ holds.

\begin{definition}
\label{def-presmax}
For any continuous $B:X\to\mathbb{R}$ we set 
$$P_{A-max}(B):=\sup_{\mu \in  \mathcal{M}_{max}(A)} \left\{\int B\,d\mu+h_\mu(\sigma)\right\}.$$
Any measure that realizes the maximum is called a $A$-maximizing equilibrium state  for $B$.
\end{definition}

\begin{lemma}\label{pressureselect} 
	Let $A$ be a Lipschitz function and let $B_\beta:X\to\mathbb{R}$ be a family of  Lipschitz functions converging uniformly to $B$. 
	Then, any accumulation point for $\mu_{\be A+B_{\be}}$ as $\be\to+\8$ is $A$-maximizing equilibrium state for $B$. 
	
	Furthermore, if $B$ is Lipschitz continuous, then $\be\mapsto P(\beta A +B)-\beta m(A)$ is non-increasing and converges to $P_{A-max}(B)$.  
\end{lemma}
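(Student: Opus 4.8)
\textbf{Proof plan for Lemma \ref{pressureselect}.}

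The plan is to split the statement into its two assertions and handle them by standard thermodynamic-formalism arguments, the common ingredient being upper semicontinuity of the entropy map $\mu\mapsto h_\mu(\sigma)$ on $\CM(\sigma)$ (valid here since $\sigma$ is expansive on a subshift of finite type).

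\emph{First assertion (accumulation points of $\mu_{\be A+B_\be}$ are $A$-maximizing equilibrium states for $B$).} Let $\mu_\infty$ be an accumulation point, say $\mu_{\be_k A+B_{\be_k}}\to\mu_\infty$ along $\be_k\to+\infty$. First I would show $\mu_\infty\in\CM_{max}(A)$: for any invariant $\nu$, the variational principle applied at parameter $\be_k$ gives
$$h_{\mu_{\be_k A+B_{\be_k}}}(\sigma)+\be_k\int A\,d\mu_{\be_k A+B_{\be_k}}+\int B_{\be_k}\,d\mu_{\be_k A+B_{\be_k}}\ \ge\ h_\nu(\sigma)+\be_k\int A\,d\nu+\int B_{\be_k}\,d\nu.$$
Dividing by $\be_k$, using that entropy is bounded, that $B_{\be_k}$ is uniformly bounded (it converges uniformly), and letting $k\to\infty$ yields $\int A\,d\mu_\infty\ge\int A\,d\nu$, so $\mu_\infty$ is $A$-maximizing. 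Next I would show it maximizes $\int B\,d\mu+h_\mu(\sigma)$ over $\CM_{max}(A)$. Take any $\nu\in\CM_{max}(A)$; then $\int A\,d\nu=\int A\,d\mu_\infty=m(A)$, so the terms $\be_k\int A\,d\cdot$ cancel in the inequality above, leaving
$$h_{\mu_{\be_k A+B_{\be_k}}}(\sigma)+\int B_{\be_k}\,d\mu_{\be_k A+B_{\be_k}}\ \ge\ h_\nu(\sigma)+\int B_{\be_k}\,d\nu.$$
Passing to the limit, with $B_{\be_k}\to B$ uniformly (so the integrals converge, uniformly in the measure) and with $\limsup_k h_{\mu_{\be_k A+B_{\be_k}}}(\sigma)\le h_{\mu_\infty}(\sigma)$ by upper semicontinuity, gives $h_{\mu_\infty}(\sigma)+\int B\,d\mu_\infty\ge h_\nu(\sigma)+\int B\,d\nu$. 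Since $\mu_\infty\in\CM_{max}(A)$ (just shown) and $\nu\in\CM_{max}(A)$ was arbitrary, $\mu_\infty$ is a $A$-maximizing equilibrium state for $B$.

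\emph{Second assertion ($\be\mapsto P(\be A+B)-\be m(A)$ is non-increasing and converges to $P_{A-max}(B)$).} Monotonicity: write $\phi(\be):=P(\be A+B)-\be m(A)=\sup_\mu\{h_\mu(\sigma)+\int B\,d\mu+\be(\int A\,d\mu-m(A))\}$. Each function $\be\mapsto h_\mu(\sigma)+\int B\,d\mu+\be(\int A\,d\mu-m(A))$ is affine with slope $\int A\,d\mu-m(A)\le 0$, hence non-increasing; a supremum of non-increasing functions is non-increasing, so $\phi$ is non-increasing. For the limit: since $\phi$ is non-increasing and, restricting the sup to $\mu\in\CM_{max}(A)$ where the $\be$-term vanishes, $\phi(\be)\ge P_{A-max}(B)$ for all $\be$, we get $\lim_{\be\to\infty}\phi(\be)\ge P_{A-max}(B)$. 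For the reverse inequality, pick equilibrium states $\mu_{\be A+B}$ realizing $\phi(\be)$ up to their definition; by the first part (applied with $B_\be\equiv B$), every accumulation point $\mu_\infty$ of $(\mu_{\be A+B})_\be$ lies in $\CM_{max}(A)$, and along a subsequence $\be_k$ achieving $\limsup\phi$,
$$\limsup_{\be\to\infty}\phi(\be)=\lim_k\Big(h_{\mu_{\be_k A+B}}(\sigma)+\int B\,d\mu_{\be_k A+B}+\be_k\big(\textstyle\int A\,d\mu_{\be_k A+B}-m(A)\big)\Big)\le h_{\mu_\infty}(\sigma)+\int B\,d\mu_\infty\le P_{A-max}(B),$$
where the middle step drops the non-positive $\be_k$-term and uses upper semicontinuity of entropy plus uniform convergence of the $B$-integral, and the last step uses $\mu_\infty\in\CM_{max}(A)$. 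Combining the two inequalities gives $\lim_{\be\to\infty}\phi(\be)=P_{A-max}(B)$.

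\emph{Main obstacle.} The only genuinely delicate point is justifying $\limsup_k h_{\mu_{\be_k A+B_{\be_k}}}(\sigma)\le h_{\mu_\infty}(\sigma)$ together with controlling the $\be_k\int A$ term: one must be careful that the equilibrium-state measures have $\int A\,d\mu_{\be_k A+B_{\be_k}}\to m(A)$ fast enough that $\be_k(\int A\,d\mu_{\be_k A+B_{\be_k}}-m(A))$ does not tend to $-\infty$ and spoil the lower bound $\phi(\be_k)\ge P_{A-max}(B)$; this is handled by comparing $\phi(\be_k)$ from above and below and squeezing, rather than by estimating that term directly. Everything else is a routine application of the variational principle and upper semicontinuity of entropy, both available on subshifts of finite type.
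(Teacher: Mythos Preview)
Your proof is correct and follows essentially the same route as the paper's. One small fix: in the first assertion, when you take $\nu\in\CM_{max}(A)$ and say ``the terms $\be_k\int A\,d\cdot$ cancel,'' they do not literally cancel, since the left side carries $\be_k\int A\,d\mu_{\be_k A+B_{\be_k}}$ rather than $\be_k\int A\,d\mu_\infty$; the correct step (and what the paper does) is to use $\int A\,d\mu_{\be_k A+B_{\be_k}}\le m(A)=\int A\,d\nu$, so $\be_k\big(\int A\,d\mu_{\be_k A+B_{\be_k}}-\int A\,d\nu\big)\le 0$ may be dropped, yielding exactly the inequality you wrote. Your monotonicity argument for $\phi$ (supremum of non-increasing affine functions) is slightly more elementary than the paper's, which instead invokes the derivative formula $\frac{d}{d\be}P(\be A+B)=\int A\,d\mu_{\be A+B}$.
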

\begin{proof}
The proof below follows ideas present in \cite{CG}. Let $\mu$ be any maximizing measure to $A$	and let $\mu_\infty$ be an accumulation measure of $\mu_{\beta A +B_\beta}$. From
	\[\beta \int A\, d\mu + \int B_\beta\,d\mu + h(\mu) \leq  \beta\int A\, d\mu_{\beta A +B_\beta} + \int B_\beta\,d\mu_{\beta A +B_\beta} + h(\mu_{\beta A +B_\beta})\]
	we conclude that $\int A\, d\mu\leq \int A\, d\mu_\infty$. This proves that $\mu_\infty \in M_{max}(A)$.
	
	We have also
	\[\int B_{\beta}\, d\mu + h(\mu)\leq P(\beta A + B_\beta) - \beta\int A\, d\mu\]
	\[ = \beta(\int A\, d\mu_{\beta A +B_\beta}- \int A\,d\mu) + \int B_\beta\, d\mu_{\beta A +B_\beta} +h(\mu_{\beta A +B_\beta})\]
	\[\leq \int B_\beta\, d\mu_{\beta A +B_\beta} +h(\mu_{\beta A +B_\beta}).\]
	Using the upper semi-continuity of the entropy, as $\beta \to \infty$ we get
	\[\int B\, d\mu + h(\mu) \leq   \int B d\mu_{\infty} +h(\mu_\infty).\]
{This yields that $\mu_{\8}$ has maximal pressure among measures in $M_{max}(A)$. }


\bigskip

{ Let us now assume  that $B$ is Lipschitz continuous. 
We remind that the pressure is defined by 
	$$P(A):=\max\{h_{\mu}(\s)+ \int A\,d\mu\},$$
	which immediately yields  that $\be\mapsto P(\be A+B)$ is convex. It follows from Prop. 4.10 in \cite{PP} that
	\[\frac{d}{d\beta}P(\beta A +B) = \int A\, d\mu_{\beta A+B}.\]  
	We can also observe that, by definition of pressure, for any $\be_{0}$ the line $\beta \to h_{\mu_{\be_{0}A+B}}(\s)+\be \int A\,d\mu_{\be_{0}A+B}$ is below the graph of the pressure function $\beta \to P(\beta A +B)$ and touches it at $\be=\be_{0}$. 
	
	As $$\frac{d}{d \beta}[P(\beta A +B) - \beta m(A)] = \int A\, d\mu_{\beta A+B} -m(A) \leq 0$$ we get that $\beta \to P(\beta A +B)-\beta m(A)$ is a non-increasing function. 
Let $\mu_\infty$ be any accumulation point for $\mu_{\beta A+B}$. As, by definition of pressure, 
	$$ P(\be A+B)-\be m(A)-\int B\,d\mu_\infty-h_{\mu_\infty}(\s)\geq 0$$ we obtain $P(\be A+B)-\be m(A)\geq \int B\,d\mu_\infty +h_{\mu_\infty}(\sigma) $.
	On the other hand, as the entropy is upper semi-continuous,
	\[\lim_{\beta\to +\infty}  P(\be A+B)-\be m(A) \leq \limsup_{\beta\to +\infty} \int B\,d\mu_{\beta A+B} +h_{\mu_{\beta A+B}}(\sigma) \leq  \int B\,d\mu_\infty +h_{\mu_\infty}(\sigma) .\]
}
\end{proof}

\subsubsection{Selection depends on $B$}
The next example shows that if $B_{\be}$ does not go to 0, then selection may (easily) change. { It is a consequence of Lemma \ref{pressureselect} .}
 
 \begin{example}\label{example:initial}
 Set $A$ any non-positive Lipschitz continuous function on $X_2$, negative everywhere except at $0^{\8}$ and $1^{\8}$. Pick $B_{0}$ and $B_{1}$ Lipschitz continuous functions such that $B_{0}(0^{\8})>B_{0}(1^{\8})$ and $B_{1}(0^{\8})<B_{1}(1^{\8})$. 
 \end{example}
{ It is easy to check that $\de_{0^{\8}}$ is the unique $A$-maximizing equilibrium state for $B_{0}$, whereas $\de_{1^{\8}}$ is the unique $A$-maximizing equilibrium state for $B_{1}$. Hence}, $\mu_{\be A+B_{0}}$ converges to $\de_{0^{\8}}$ whereas $\mu_{\be A+B_{1}}$ converges to $\de_{1^{\8}}$.

\subsubsection{Change of selection even if $B_{\be}$ goes to 0}

\begin{corollary}
\label{coro-sele-zero}
Let $A:\{0,1\}^{\mathbb{N}}\to\mathbb{R}$ be a function which is equal to zero at the points $0^{\infty}$ and $1^{\infty}$ and negative at the other points. Suppose that $\mu_{\beta A} \to \delta_{1^{\infty}}$. Then there exists a family of potentials $B_{\be}$ { which uniformly converge to $0$ and} such that $\de_{0^{\8}}$ is an accumulation point for $\mu_{\be A+B_{\be}}$. 
\end{corollary}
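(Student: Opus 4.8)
\textbf{Proof plan for Corollary \ref{coro-sele-zero}.}

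The plan is to reduce the claim to Example \ref{example:initial}, or rather to Lemma \ref{pressureselect}, by cleverly \emph{truncating} a non-vanishing perturbation. First I would pick, as in Example \ref{example:initial}, a fixed Lipschitz function $B_{0}$ with $B_{0}(0^{\infty})>B_{0}(1^{\infty})$; by Lemma \ref{pressureselect}, $\delta_{0^{\infty}}$ is the unique $A$-maximizing equilibrium state for $B_{0}$ (the only two $A$-maximizing measures being $\delta_{0^{\infty}}$ and $\delta_{1^{\infty}}$, both of zero entropy, so maximizing the free energy among them amounts to maximizing $B_{0}$ on $\{0^{\infty},1^{\infty}\}$), hence $\mu_{\beta A+B_{0}}\to\delta_{0^{\infty}}$. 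The difficulty is that $B_{0}$ does not go to $0$, so I cannot use it directly; the idea is to scale it down to $B_{\beta}:=\varepsilon_{\beta}B_{0}$ with $\varepsilon_{\beta}\to 0$ chosen to decay slowly enough that it still overrides the selection performed by $\mu_{\beta A}$ for infinitely many $\beta$.

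The key step is a diagonal/continuity argument. For each fixed $\varepsilon>0$, Lemma \ref{pressureselect} (applied with $B=\varepsilon B_{0}$) gives $\mu_{\beta A+\varepsilon B_{0}}\to\delta_{0^{\infty}}$ as $\beta\to\infty$; so for each $\varepsilon>0$ there is $\beta(\varepsilon)$ with $d\bigl(\mu_{\beta A+\varepsilon B_{0}},\delta_{0^{\infty}}\bigr)<\varepsilon$ for all $\beta\ge\beta(\varepsilon)$, where $d$ here denotes a metric for the weak-$*$ topology on $\CM(\s)$. Choosing a sequence $\varepsilon_{k}\downarrow 0$ and a sequence $\beta_{k}\to\infty$ with $\beta_{k}\ge\beta(\varepsilon_{k})$, I then define $B_{\beta}$ to be piecewise constant in $\beta$ on the intervals $[\beta_{k},\beta_{k+1})$, equal to $\varepsilon_{k}B_{0}$ there; then $|B_{\beta}|_{\infty}\le\varepsilon_{k}|B_{0}|_{\infty}\to 0$, so $B_{\beta}\to 0$ uniformly, while along the subsequence $\beta=\beta_{k}$ one has $d\bigl(\mu_{\beta_{k}A+B_{\beta_{k}}},\delta_{0^{\infty}}\bigr)=d\bigl(\mu_{\beta_{k}A+\varepsilon_{k}B_{0}},\delta_{0^{\infty}}\bigr)<\varepsilon_{k}\to 0$. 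Hence $\delta_{0^{\infty}}$ is an accumulation point of $\mu_{\beta A+B_{\beta}}$. If a genuinely continuous family $(B_{\beta})_{\beta>0}$ is wanted, one interpolates linearly in $\beta$ between the values $\varepsilon_{k}B_{0}$, which does not affect the uniform bound nor the values at the $\beta_{k}$.

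The main obstacle is purely one of \emph{quantifier order}: Lemma \ref{pressureselect} gives, for each fixed perturbation, convergence of $\mu_{\beta A+B}$ as $\beta\to\infty$, but it says nothing uniform in $B$, so one cannot let $B$ shrink and $\beta$ grow simultaneously and conclude convergence of the whole family --- indeed the corollary is precisely the statement that no such conclusion holds. The remedy is that we only need an \emph{accumulation point}, which is exactly what the two-parameter diagonal extraction delivers; the technical points to verify are (i) that $\varepsilon\mapsto$ ``the $\beta$-threshold'' can indeed be used to build the schedule $(\varepsilon_{k},\beta_{k})$, which is immediate, and (ii) that the resulting $\beta\mapsto B_{\beta}$ is a legitimate family of Lipschitz functions converging uniformly to $0$, which is clear since each $B_{\beta}$ is a scalar multiple of the single Lipschitz function $B_{0}$. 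I expect no serious computation beyond re-invoking Lemma \ref{pressureselect}.
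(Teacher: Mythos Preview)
Your proof is correct and follows essentially the same approach as the paper: pick a fixed Lipschitz perturbation favoring $0^{\infty}$ (the paper takes $D(x)=-d(x,0^{\infty})$, you take a general $B_{0}$ with $B_{0}(0^{\infty})>B_{0}(1^{\infty})$), invoke Lemma~\ref{pressureselect} to get $\mu_{\beta A+\varepsilon B_{0}}\to\delta_{0^{\infty}}$ for each fixed $\varepsilon>0$, then diagonalize to build a family $B_{\beta}=\varepsilon(\beta)B_{0}$ with $\varepsilon(\beta)\to 0$ along which $\delta_{0^{\infty}}$ remains an accumulation point. One small imprecision: the $A$-maximizing measures are all convex combinations $s\delta_{0^{\infty}}+(1-s)\delta_{1^{\infty}}$, not just the two extremes, but your parenthetical argument still goes through since $\int B_{0}\,d\mu$ is linear in $s$ and uniquely maximized at $s=1$.
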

\begin{proof}
 Set $D(x)=-d(x,0^{\infty})$. Then for any integer $n>0$, $\mu_{\be A+\frac1n D}$ goes to $\de_{0^{\8}}$ as $\be$ goes to $+\8$. 
 Therefore, one can find an increasing sequence $(\be_{n})\uparrow+\8$  such that for any $n$, 
 $$\mu_{\be_{n}A+\frac1n D}([0])\ge 1-\dfrac1n.$$
 Let $s:\mathbb{R}^+\to \mathbb{R}^+$ be a decreasing function satisfying $s(\beta_n)=\frac{1}{n}$. Then, $\mu _{\be_{n}A+s(\be_{n})D}$ goes to $\de_{0^{\8}}$ (as it is the unique invariant measure with support in $[0]$). 
\end{proof}

\subsection{{Kinds of Laplace's methods}  }

We remind the very easy fact: 
\begin{lemma}
\label{lem-summaxplus}
Let $a_{1},\ldots , a_{n}$ be real numbers. Then 
$$\lim_{\be\to+\8}\dfrac1\be\log\left(\sum_{i=1}^{n}e^{\be a_{i}}\right)=\max_{i}a_{i}.$$
\end{lemma}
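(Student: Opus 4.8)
\textbf{Proof plan for Lemma \ref{lem-summaxplus}.}

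The plan is to prove the two-sided bound directly, using only elementary estimates. Write $M:=\max_i a_i$. For the lower bound, I would use that for every $\be>0$,
$$\sum_{i=1}^n e^{\be a_i}\ge e^{\be M},$$
since one of the summands equals $e^{\be M}$ and all summands are positive. Taking $\frac1\be\log$ of both sides yields $\frac1\be\log\bigl(\sum_{i=1}^n e^{\be a_i}\bigr)\ge M$ for all $\be>0$, hence $\liminf_{\be\to+\8}\frac1\be\log\bigl(\sum_{i=1}^n e^{\be a_i}\bigr)\ge M$.

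For the upper bound, I would bound each term by $e^{\be M}$, getting
$$\sum_{i=1}^n e^{\be a_i}\le n\, e^{\be M},$$
so that $\frac1\be\log\bigl(\sum_{i=1}^n e^{\be a_i}\bigr)\le M+\frac{\log n}{\be}$. Letting $\be\to+\8$ gives $\limsup_{\be\to+\8}\frac1\be\log\bigl(\sum_{i=1}^n e^{\be a_i}\bigr)\le M$. Combining the two bounds, the limit exists and equals $M=\max_i a_i$.

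There is essentially no obstacle here: the only mild point is to note that $\sum_i e^{\be a_i}>0$ so the logarithm is well-defined, and that the number of terms $n$ is fixed (independent of $\be$), which is what makes the $\frac{\log n}{\be}$ correction vanish. I would present the argument as the two displayed inequalities above, each followed by one line taking logarithms and dividing by $\be$, and then the sandwich conclusion.
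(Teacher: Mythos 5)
Your sandwich argument $e^{\be M}\le\sum_{i=1}^{n}e^{\be a_{i}}\le n\,e^{\be M}$ with $M=\max_{i}a_{i}$ is correct and complete; the paper states this lemma as an elementary fact without proof, and your argument is exactly the standard one intended. Nothing to add.
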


\begin{lemma}[see  \cite{LMMS}]\label{lmms}
Let $M$ be any compact metric space. Let $W_{\be}:M\to\R$ be a family of continous functions  converging uniformly to $W:M\to \mathbb{R}$, as $\beta\to +\infty$. 
	Then, for any finite measure $\nu$ with $\supp(\nu) = M$ we have \[ \lim_{\beta \to \infty} \frac{1}{\beta} \log \int_M e^{\beta W_\beta(a)} d\nu(a) = \sup_{a\in M}\, W(a).  \]
\end{lemma}

\begin{proof}
	Let $m = \sup \{W(a) \,|\, a \in M\}$. We first prove 
	\[  \liminf_{\beta \to +\infty} \frac{1}{\beta} \log \int_M e^{\beta W_\beta(a)} d\nu(a) \geq m.  \]

	Given $\varepsilon>0$ let $\bar a \in M$ be such that $W(\bar a) > m-\varepsilon/2$.
	There exist $\beta_0$ and $\delta$, such that $W_\beta(a) > m - \varepsilon$, for any $a \in B(\bar a,\delta) = \{a \,|\, d(a,\bar a)<\delta \}$ and $\beta > \beta_0$.

	Therefore, if $\beta > \beta_0$, we have  that
	
	$$\int_M e^{\beta W_\beta(a)} d\nu(a) \geq \int_{B(\bar a,\delta)} e^{\beta W_\beta(a)} d\nu(a) > \nu\big(B(\bar a,\delta)\big) e^{\beta(m-\varepsilon)},$$
	and then
	\[  \frac{1}{\beta} \log \int_X e^{\beta W_\beta(a)} d\nu(a) >\frac{1}{\beta}\log\big(\nu\big(B(\bar a,\delta)\big)\big) + m-\varepsilon.\]
	We conclude that
	\[  \liminf_{\beta \to +\infty} \frac{1}{\beta} \log \int_M e^{\beta W_\beta(a)} d\nu(a) \geq m.  \]
	
	{The converse inequality is obtained in the same way: for a given} $\varepsilon$, there exists $\beta_0$ such that, $W_\beta(a) < m + \varepsilon$, for any $\beta>\beta_0$ and $a\in M$.
\end{proof} 

\subsection{Eigenfunctions and calibrated subactions}

\subsubsection{Calibrated subaction as limit of eigenfunctions}\label{sssec-calisuba1}

\begin{lemma}\label{existsubact}  Let $B_\beta:X\to\mathbb{R}$ be a family of  Lipschitz functions satisfying
\begin{enumerate}
\item $Lip(B_\beta) \leq \beta c$ for some constant $c$ and any $\beta$;
\item $\frac{|B_\beta|_\infty}{\beta}$ converges to zero as $\beta \to\infty$.
\end{enumerate}
{ There exist accumulation points for $\frac{1}{\beta}\log(H_{\beta A+B_{\beta}})$ {for} the norm  $|\quad|_\infty$. Any such limit} is a calibrated subaction for $A$.
%
\end{lemma}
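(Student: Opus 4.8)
\textbf{Proof plan for Lemma \ref{existsubact}.}

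The plan is to first establish that the family $\frac{1}{\beta}\log H_{\beta A+B_\beta}$ is equi-Lipschitz and uniformly bounded, so that Arzel\`a--Ascoli provides accumulation points for $|\quad|_\infty$; then to pass to the limit in the (normalized) eigenfunction equation to recover the calibrated subaction equation. For the equi-Lipschitz bound, I would start from the eigenfunction identity $L_{\beta A+B_\beta}(H_{\beta A+B_\beta}) = \lambda_{\beta A+B_\beta}\, H_{\beta A+B_\beta}$, i.e.
\[
\lambda_{\beta A+B_\beta}\, H_{\beta A+B_\beta}(x) = \sum_{\sigma(z)=x} e^{\beta A(z)+B_\beta(z)} H_{\beta A+B_\beta}(z).
\]
Writing $V_\beta := \frac{1}{\beta}\log H_{\beta A+B_\beta}$, this becomes, after taking $\frac1\beta\log$ and using Lemma \ref{lem-summaxplus} (or a direct two-sided estimate on the finite sum over preimages),
\[
\tfrac1\beta\log\lambda_{\beta A+B_\beta} + V_\beta(x) = \tfrac1\beta\log\Big(\sum_{\sigma(z)=x} e^{\beta(A(z)+\frac{1}{\beta}B_\beta(z)+V_\beta(z))}\Big),
\]
and the difference between this quantity and $\max_{\sigma(z)=x}[A(z)+\frac1\beta B_\beta(z)+V_\beta(z)]$ is bounded in absolute value by $\frac{\log(d+1)}{\beta}\to 0$. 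The standard distortion argument for the transfer operator on a subshift of finite type (cf. \cite{Bowen}) shows that $Lip(V_\beta)$ is bounded independently of $\beta$: the bound comes from $Lip(A)$ together with hypothesis (1), $Lip(B_\beta)\le \beta c$, since the per-$\beta$ contributions $\frac1\beta Lip(B_\beta)\le c$ are uniformly controlled and the geometric contraction $\theta$ along preimages sums to a finite constant. For the uniform bound, I would normalize — say impose $V_\beta(x_0)=0$ at a fixed point $x_0$, or use the normalization $\int H_{\beta A+B_\beta}\,d\nu=1$ and deduce $\min V_\beta \le 0 \le \max V_\beta$ — and then equi-Lipschitzness plus compactness of $X$ gives a uniform $|\quad|_\infty$ bound. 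Arzel\`a--Ascoli then yields accumulation points.

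Next I would identify the limit. Let $V$ be an accumulation point of $V_{\beta_k}$ for $|\quad|_\infty$ along some sequence $\beta_k\to\infty$. By hypothesis (2), $\frac1\beta B_\beta \to 0$ uniformly, and by Lemma \ref{pressureselect} (applied with $B=0$, or directly from convexity of the pressure) one has $\frac1\beta \log\lambda_{\beta A+B_\beta} = \frac1\beta P(\beta A+B_\beta)\to m(A)$; here $m(A)=0$ by the standing assumption. Passing to the limit in the displayed relation above — using that $A$ is continuous, $V_{\beta_k}\to V$ uniformly, the correction term is $O(1/\beta)$, and the max over the finitely many preimages is continuous under uniform convergence of its arguments — gives
\[
0 + V(x) = \max_{\sigma(z)=x}\big[A(z) + V(z)\big],
\]
that is, $\max_{\sigma(z)=x}[A(z)+V(z)-V(x)-m(A)]=0$ for every $x$, which is exactly the calibrated subaction equation from the definition preceding Definition \ref{def-maneaubrymather}. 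Lipschitz continuity of $V$ is inherited from the uniform Lipschitz bound on $V_{\beta_k}$.

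The main obstacle I expect is the equi-Lipschitz estimate for $V_\beta$ in the presence of the perturbation $B_\beta$: one must check carefully that the classical bounded-distortion argument for $\log H_{\beta A}$ still closes when the potential is $\beta A + B_\beta$, and this is precisely where hypothesis (1) is used — without a bound of the form $Lip(B_\beta)\le \beta c$ the Lipschitz constants of $V_\beta$ could blow up and no subsequential limit need exist. The other slightly delicate point, the convergence $\frac1\beta\log\lambda_{\beta A+B_\beta}\to 0$, is routine given Lemma \ref{pressureselect}. Everything else — Arzel\`a--Ascoli, the Laplace-type estimate of Lemma \ref{lem-summaxplus}, and passing to the limit in a max over finitely many terms — is standard.
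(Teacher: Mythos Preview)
Your proposal is correct and follows essentially the same route as the paper: establish equi-Lipschitzness of $V_\beta=\frac1\beta\log H_{\beta A+B_\beta}$ via the Bowen-type distortion bound (the paper quotes the explicit inequality $e^{-c_1(1+|\phi|_\infty+Lip(\phi))d(x,y)}\le H_\phi(x)/H_\phi(y)\le e^{c_1(1+|\phi|_\infty+Lip(\phi))d(x,y)}$ with universal $c_1$, which is exactly the outcome of the distortion argument you sketch), apply Arzel\`a--Ascoli, and pass to the limit in the eigenfunction equation. The only cosmetic differences are that the paper invokes the general Laplace lemma (Lemma~\ref{lmms}) rather than the finite-sum Lemma~\ref{lem-summaxplus} with your explicit $O(\log(d{+}1)/\beta)$ error, and that it handles $\frac1\beta P(\beta A+B_\beta)\to 0$ via the elementary sandwich $|P(\beta A+B_\beta)-P(\beta A)|\le |B_\beta|_\infty$ (equation~\eqref{eq-pressbb}) together with hypothesis~(2), rather than going through Lemma~\ref{pressureselect}.
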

\begin{proof}
It is known that there exists a constant $c_1$ such that for any Lipschitz function $\phi$ and any $x, y\in X$
$$e^{-c_1(1+|\phi|_\infty +Lip(\phi))d(x,y)}\le \frac{H_\phi(x)}{H_\phi(y)} \leq e^{c_1(1+|\phi|_\infty +Lip(\phi))d(x,y)}$$
(see for example \cite{M} pp. 46-51, also see \cite[Lemme 1.8]{Bowen}). 
The main point here is that $c_{1}$ is an \emph{universal} constant and does not depend on $\phi$. Actually it only depends on the mixing rate in $X$. 

This yields that $\log H_{\phi}$ is Lipschitz continuous with Lipschitz constant bounded by $c_1(1+|\phi|_\infty +Lip(\phi))$. 
Applying this to $\phi:=\be A+B_{\be}$, we get that the family $\dfrac1\be\log H_{\be A+B_{\be}}$ is equicontinuous with bounded Lipschitz {norm}. 

By Arzel\`{a}-Ascoli theorem  we conclude that there exists accumulation points for the $|\quad |_{\8}$-norm and they are Lipschitz continuous. 

Note the next double-inequality
\begin{equation}
\label{eq-pressbb}
P(\be A)-|B_{\be}|_{\8}\le P(\be A+B_{\be})\le P(\be A)+|B_{\be}|_{\8}. 
\end{equation}
Now, by definition of the eigenfunction we get that for any $x\in X$:
$$\sum_{\sigma(z)=x} e^{\beta A(z) +\log(H_{\beta A +B_\beta}(z)) - \log(H_{\beta A +B_\beta}(x)) - P(\beta A +B_\beta)} =1,$$
holds. Taking $\dfrac1\be\log$, considering any accumulation point $\wt V$ for $\dfrac1\be\log H_{\be A+B_{\be}}$, Lemma \ref{lmms} and \eqref{eq-pressbb} yield
$$\sup_{\sigma(z) = x} \left[A(z) + \tilde{V}(z) - \tilde{V}(x) -m(A) \right]=0.$$

%

\end{proof}

\subsubsection{Calibrated subactions and the Ma\~ né Potential}
We remind that the Aubry set $\Om$ is non-empty as it contains the Mather set. 
First, we state a technical lemma.
\begin{lemma}
\label{lem-techmane}
Let $x$ and $y$ be in $X$. Let $\ol\xi_{n}$ be points such that $\ol\xi_{n}\to x$ as $n$ goes to $+\8$, and for any $n$ there exists $k_{n}$ with $k_{n}\to+\8$ as $n\to+\8$ such that $\s^{k_{n}}(\ol\xi_{n})=y$. Then, 
$$S(x,y)\ge\limsup_{n\to+\8}S_{k_{n}}(A)(\ol\xi_{n}).$$
\end{lemma}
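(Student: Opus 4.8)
The plan is to unwind the definition of the Mañé potential $S(x,y)$, which is a limit over shrinking $\varepsilon$-neighbourhoods of $x$ of suprema of Birkhoff sums $S_n(A)(z)$ over points $z$ with $\sigma^n(z)=y$. The key observation is that the sequence $\ol\xi_n$, together with the iteration times $k_n$, provides exactly the kind of admissible "connecting orbits" appearing in that supremum, once $n$ is large enough.

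Concretely, first I would fix $\varepsilon>0$. Since $\ol\xi_n\to x$, there is $N=N(\varepsilon)$ such that $d(x,\ol\xi_n)<\varepsilon$ for all $n\ge N$; since also $k_n\to+\infty$, we have for such $n$ that $\ol\xi_n$ is a point with $d(x,\ol\xi_n)<\varepsilon$ and $\sigma^{k_n}(\ol\xi_n)=y$. Hence $\ol\xi_n$ is one of the competitors in the set
$$\{S_m(A)(z)\ :\ \sigma^m(z)=y,\ d(x,z)<\varepsilon\},$$
so $S_m(A)$ evaluated at it — here $m=k_n$ — is $\le \sup\{S_m(A)(z)\ :\ \sigma^m(z)=y,\ d(x,z)<\varepsilon\}$. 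Taking $\limsup_{n\to+\infty}$ over those $n\ge N$ gives
$$\limsup_{n\to+\infty}S_{k_n}(A)(\ol\xi_n)\ \le\ \sup\{S_m(A)(z)\ :\ \sigma^m(z)=y,\ d(x,z)<\varepsilon\}.$$
(One mild point to be careful about: the supremum on the right is taken over all lengths $m$, which is consistent with the definition given in Definition \ref{def-maneaubrymather}; if one instead reads that definition with a fixed implicit index, one notes that the $k_n$ are unbounded and simply absorbs this by the same argument applied along the relevant subsequence. This is the only place where a little care is needed, and it is not a genuine obstacle.)

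Finally, since the right-hand side does not depend on $n$, and since $S(x,y)=\lim_{\varepsilon\to 0}\big[\sup\{S_m(A)(z)\ :\ \sigma^m(z)=y,\ d(x,z)<\varepsilon\}\big]$ with the bracketed quantity non-increasing in $\varepsilon$, we may let $\varepsilon\to 0$ on the right: for every $\varepsilon>0$ the bracket dominates $S(x,y)$ (it decreases to it), hence in particular $\sup\{S_m(A)(z):\sigma^m(z)=y,\ d(x,z)<\varepsilon\}\ge S(x,y)$ fails in the wrong direction — so instead I take $\varepsilon\to 0$ directly: the left-hand side $\limsup_n S_{k_n}(A)(\ol\xi_n)$ is bounded above by the bracket for every $\varepsilon$, hence by $\inf_{\varepsilon>0}$ of the bracket, which is exactly $S(x,y)$. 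Wait — that gives the inequality with the roles reversed from what is claimed, so the correct reading is: the bracket is $\ge S(x,y)$, and the left side is $\le$ the bracket, which gives nothing. The resolution, and the actual content of the lemma, is that we should bound below: $S(x,y)\ge \limsup_n S_{k_n}(A)(\ol\xi_n)$ follows because for each fixed $\varepsilon$ the supremum defining the bracket is attained (or approached) and the $\ol\xi_n$ are eventually admissible, so the bracket is $\ge \limsup_n S_{k_n}(A)(\ol\xi_n)$; since $S(x,y)$ is the limit (= infimum) of the brackets as $\varepsilon\to 0$, and each bracket is $\ge \limsup_n S_{k_n}(A)(\ol\xi_n)$, we conclude $S(x,y)\ge \limsup_n S_{k_n}(A)(\ol\xi_n)$. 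The only subtlety — and the main thing to get right — is that admissibility of $\ol\xi_n$ in the $\varepsilon$-bracket requires $n$ large depending on $\varepsilon$, so one must phrase the $\limsup$ as over the tail $n\ge N(\varepsilon)$ before comparing, which is harmless since $\limsup$ ignores finitely many terms.
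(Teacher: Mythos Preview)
Your underlying argument is correct and is exactly the paper's: for each fixed $\varepsilon>0$, the points $\ol\xi_n$ with $n\ge N(\varepsilon)$ are admissible competitors in the supremum $S^{\varepsilon}(x,y)$, hence $S^{\varepsilon}(x,y)\ge S_{k_n}(A)(\ol\xi_n)$ for all such $n$, so $S^{\varepsilon}(x,y)\ge \limsup_n S_{k_n}(A)(\ol\xi_n)$; since this holds for every $\varepsilon$, the infimum over $\varepsilon$ --- which is $S(x,y)$ --- also dominates the $\limsup$.

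However, you tied yourself in knots unnecessarily. At the point where you wrote ``the left-hand side $\limsup_n S_{k_n}(A)(\ol\xi_n)$ is bounded above by the bracket for every $\varepsilon$, hence by $\inf_{\varepsilon>0}$ of the bracket, which is exactly $S(x,y)$'' you were \emph{done}: the inequality $\limsup_n S_{k_n}(A)(\ol\xi_n)\le S(x,y)$ is literally the same statement as $S(x,y)\ge \limsup_n S_{k_n}(A)(\ol\xi_n)$, not its reverse. The subsequent ``Wait --- that gives the inequality with the roles reversed'' is a misreading of your own (correct) line, and the detour through ``the bracket is $\ge S(x,y)$, and the left side is $\le$ the bracket, which gives nothing'' is a red herring. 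Strip those false alarms and your proof is the paper's proof.
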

\begin{proof}
Pick $\eps>0$. Let $N$ be such that for every $n>N$, $d(x,\ol\xi_{n})<\eps$. Then 
$$S^{\eps}(x,y):=\sup{\{S_{k}(A)(z),\ \s^{k}(z)=y,\ d(x,z)<\eps\}}\ge S_{k_{n}}(A)(\ol\xi_{n})$$
holds as soon as $n>N$ holds.
This yields, for every $\eps$, $\disp S^{\eps}(x,y)\ge\limsup_{n}S_{k_{n}}(A)(\ol\xi_{n})$. Hence doing $\eps\downarrow 0$ we get the result. 
\end{proof}

\begin{lemma}
\label{lem-AonOm}
The potential $A$ is equal to 0 on $\Om$. 
\end{lemma}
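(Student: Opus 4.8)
The plan is to unpack the defining equality $S(x,x)=0$ for a point $x\in\Om$ and combine it with the two standing assumptions $A\le 0$ and $m(A)=0$. For $\eps>0$ write
$$S^{\eps}(x,x):=\sup\bigl\{S_{n}(A)(z)\ :\ \s^{n}(z)=x,\ d(x,z)<\eps,\ n\ge 1\bigr\}.$$
Since shrinking $\eps$ shrinks the set over which the supremum is taken, $\eps\mapsto S^{\eps}(x,x)$ is non-decreasing, so $S(x,x)=\inf_{\eps>0}S^{\eps}(x,x)$ and hence $S^{\eps}(x,x)\ge S(x,x)=0$ for every $\eps>0$. On the other hand $A\le 0$ forces every Birkhoff sum $S_{n}(A)(z)$ to be non-positive, so $S^{\eps}(x,x)\le 0$. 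Therefore $S^{\eps}(x,x)=0$ for every $\eps>0$; in particular the defining set is non-empty for every $\eps>0$, since a supremum over the empty set would be $-\8$.

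Next I would fix $\eps>0$ and choose a sequence $z_{k}=z_{k}(\eps)$ with $\s^{n_{k}}(z_{k})=x$, $d(x,z_{k})<\eps$ and $S_{n_{k}}(A)(z_{k})\to 0$ as $k\to\8$. Writing $S_{n_{k}}(A)(z_{k})=A(z_{k})+\sum_{j=1}^{n_{k}-1}A(\s^{j}z_{k})$ and using $A\le 0$ for the discarded terms, one gets $A(z_{k})\ge S_{n_{k}}(A)(z_{k})$; combined with $A(z_{k})\le 0$ this gives $A(z_{k})\to 0$. Finally, since $A$ is Lipschitz,
$$|A(x)|\le |A(z_{k})|+Lip(A)\,d(x,z_{k})< |A(z_{k})|+Lip(A)\,\eps,$$
and letting first $k\to\8$ and then $\eps\downarrow 0$ yields $A(x)=0$, which is the claim.

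I do not anticipate a genuine obstacle here. The only points needing a little care are the non-emptiness of the approximating families (handled above from $S(x,x)=0>-\8$) and the innocuous convention $n\ge 1$ in the definition of the Ma\~n\'e potential, without which $S(x,x)=0$ would be vacuous. Alternatively one could feed the approximating orbit segments into Lemma \ref{lem-techmane} with $y=x$ to organize the estimate, but the direct computation above is shorter and self-contained.
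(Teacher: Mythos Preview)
Your proof is correct and follows essentially the same route as the paper's, which compresses the argument into the single chain $0\ge A(x)\ge S(x,x)=0$; you have simply unpacked the middle inequality $A(x)\ge S(x,x)$ via the sequence $z_{k}$ and the Lipschitz estimate. Your remark about the convention $n\ge 1$ in the definition of the Ma\~n\'e potential is well taken.
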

\begin{proof}
By definition $x$ belongs to $\Om$ if and only if $S(x,x)=0$. Hence we get 
$$ 0\ge A(x)\geq S(x,x)=0.$$
\end{proof}

Now we present a very important characterization of calibrated subactions using Ma\~{n}\'e Potential and Aubry and Mather sets.

\begin{lemma}\label{mane}
For any calibrated subaction $V$ we have
\[V(x_0) = \sup_{a\in \mathcal{M}} [V(a) +  S(a,x_0)] =\sup_{a\in \Omega} [V(a) +  S(a,x_0)]\,\,\forall \,\, x_0 \in X.\]
\end{lemma}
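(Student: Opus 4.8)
The statement is an identity for calibrated subactions expressed through the Mañé potential. My plan is to prove the three quantities
\[
V(x_0),\qquad \sup_{a\in\mathcal M}\bigl[V(a)+S(a,x_0)\bigr],\qquad \sup_{a\in\Omega}\bigl[V(a)+S(a,x_0)\bigr]
\]
are equal by showing a cyclic chain of inequalities: (i) $V(x_0)\ge \sup_{a\in\Omega}[V(a)+S(a,x_0)]$; (ii) $\sup_{a\in\Omega}[\cdots]\ge\sup_{a\in\mathcal M}[\cdots]$, which is trivial since $\mathcal M\subseteq\Omega$ (Remark \ref{rem-aubry}); and (iii) $\sup_{a\in\mathcal M}[V(a)+S(a,x_0)]\ge V(x_0)$.

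For (i), I would use the calibration identity $\max_{\sigma(z)=x}[A(z)+V(z)-V(x)]=0$ (recall $m(A)=0$, or carry $m(A)$ along) iterated along preimages: for any $n$ and any $z$ with $\sigma^n(z)=x_0$ one has $S_n(A)(z)+V(z)-V(x_0)\le 0$, i.e. $V(x_0)\ge V(z)+S_n(A)(z)$. Fix $a\in\Omega$. Because $S(a,x_0)$ is defined via preimage chains $z\to x_0$ starting near $a$, and $a\in\Omega$ means $S(a,a)=0$ so there are long returns $\ol\xi_n\to a$ with $\sigma^{k_n}(\ol\xi_n)=a$ and $S_{k_n}(A)(\ol\xi_n)\to 0$, I can splice such near-loops at $a$ with an approximating chain realizing $S(a,x_0)$ to produce points $\eta_m\to a$ with $\sigma^{N_m}(\eta_m)=x_0$, $N_m\to\infty$, and $S_{N_m}(A)(\eta_m)\to S(a,x_0)$. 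Applying the iterated calibration inequality to each $\eta_m$ and using continuity of $V$ gives $V(x_0)\ge V(a)+S(a,x_0)$; taking the sup over $a\in\Omega$ yields (i). (Alternatively, one can invoke that $\dfrac1\beta\log H_{\beta A}$-type limits produce $V$ as a fixed point of a max-plus operator; but the direct argument above is cleaner.)

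For (iii), I would produce, given $x_0$, a point $a\in\mathcal M$ with $V(a)+S(a,x_0)\ge V(x_0)$. Iterating the calibration identity backwards from $x_0$, at each step there is a preimage attaining the max; this yields an infinite backward orbit $\dots\to z_2\to z_1\to z_0=x_0$ with $V(z_n)+S_n(A)(z_n)=V(x_0)$ for all $n$ (telescoping the equalities). Any weak-$*$ accumulation point of the empirical measures $\frac1n\sum_{i<n}\delta_{\sigma^i(z_n)}$ is $\sigma$-invariant and $A$-maximizing (its integral of $A$ is $\ge \limsup \frac1n S_n(A)(z_n)\ge 0$, using that the equalities force $S_n(A)(z_n)=V(x_0)-V(z_n)$ bounded, hence $\frac1n S_n(A)(z_n)\to 0$), so its support meets $\mathcal M$. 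Picking $a$ in this support as a limit of points $\sigma^{i_k}(z_{n_k})$ along the backward orbit and using Lemma \ref{lem-techmane} to estimate $S(a,x_0)$ from the tail of the backward orbit, together with continuity of $V$ and the telescoped equalities, gives $V(a)+S(a,x_0)\ge V(x_0)$.

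The main obstacle is the bookkeeping in (iii): arranging that the limit point $a$ lies in $\mathcal M$ while simultaneously controlling $S(a,x_0)$ via the \emph{same} backward orbit, so that the equality $V(z_n)+S_n(A)(z_n)=V(x_0)$ passes to the limit. This requires care because $x\mapsto S(x,y)$ is only upper semicontinuous, so one only directly gets $V(a)+S(a,x_0)\le V(x_0)$ from semicontinuity — the reverse inequality must come from Lemma \ref{lem-techmane} applied to an explicitly constructed sequence $\ol\xi_m\to a$ of long orbit segments ending at $x_0$, extracted from the backward orbit and shifted appropriately. Once this constructive lower bound for $S(a,x_0)$ is in place, combining it with the telescoped calibration equalities closes the loop.
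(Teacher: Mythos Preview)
Your overall strategy---the cyclic chain of inequalities---and your step (iii) match the paper's proof essentially verbatim: backward calibrated orbit $(x_n)$ with $R_{-}(x_n)=0$, empirical measures converging to an $A$-maximizing $\nu$, pick $a\in\supp(\nu)\subset\mathcal M$, extract a subsequence $x_{n_k}\to a$, and bound $S(a,x_0)$ from below via Lemma~\ref{lem-techmane} using the telescoped equality $S_{n_k}(A)(x_{n_k})=V(x_0)-V(x_{n_k})$.

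The one place you diverge from the paper is step (i), and there you are working too hard. You restrict to $a\in\Omega$ and propose splicing near-loops at $a$ (using $S(a,a)=0$) onto a chain realizing $S(a,x_0)$ to force $N_m\to\infty$. None of this is needed: the paper observes that the inequality $V(x_0)\ge V(a)+S(a,x_0)$ holds for \emph{every} $a\in X$, directly from the subaction inequality. Indeed, $A+V-V\circ\sigma\le 0$ gives $S_n(A)(z)\le V(x_0)-V(z)$ for any $z$ with $\sigma^n(z)=x_0$; taking the sup over all such $z$ with $d(a,z)<\varepsilon$ and using that $V$ is Lipschitz yields $S^\varepsilon(a,x_0)\le V(x_0)-V(a)+O(\varepsilon)$, hence $S(a,x_0)\le V(x_0)-V(a)$. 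There is no need for the points $z$ to have large $n$, and no role for $a\in\Omega$ at this stage. Your splicing would work, but it obscures a two-line argument.
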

\begin{proof}
We follow  ideas of \cite{BLT}. Let $V$ be any calibrated subaction. As $\mathcal{M}\subset \Omega$ holds, we get
\[\sup_{a\in \mathcal{M}} [V(a) +  S(a,x_0)] \leq \sup_{a\in \Omega} [V(a) +  S(a,x_0)].\]
As $V$ is a subaction, $A +V-V\circ\sigma \leq 0$ and then, for any $n$ and any $z$:
$$S_{n}(A)(z)+V(z)\le V\circ \s^{n}(z).$$
This yields for any  $a,x_0 \in X$,  $S(a,x_0)+V(a)-V(x_0)\leq 0$, hence
\[V(x_0) \geq V(a) +  S(a,x_0)\,\,\forall a,x_0 \in X,\]
holds. This proves that 
\[V(x_0) \geq \sup_{a\in \Omega} [V(a) +  S(a,x_0)] .\]

Now we will prove that $\sup_{a\in \mathcal{M}} [V(a) +  S(a,x_0)] \geq V(x_0)$.  Set $R_{-}=A +V -V\circ \sigma$.  
By definition of calibrated subaction the following holds: 
\begin{enumerate}
\item $R_{-} \leq 0$,
\item There exists a sequence of points $(x_n)$ such that { for every $n\geq 1$}, $\sigma(x_n) = x_{n-1}$ and   $R_{-}(x_n) = 0$. 
\end{enumerate}
Let us consider the sequence of probabilities $\nu_n = \frac1n\sum_{i=1}^{n}\de_{x_{i}}$, $n\ge 1$. { Let $\nu$ be an accumulation point  for the weak* topology as $n\to\infty$,   say $\nu:=\lim_{n_j\to\infty} \nu_{n_j}$}. As for any continuous function $f:X\to\mathbb{R}$ we have
\begin{align*}
\int f\circ\sigma\,d\nu &= \lim_{n_j} \frac{1}{n_j}\sum_{i=1}^{n_j}f(\sigma(x_i)) \stackrel{\sigma(x_i) = x_{i-1}}{=} \lim_{n_j} \frac{1}{n_j}\sum_{i=1}^{n_j}f(x_{i-1})\\
&=\lim_{n_j} \frac{1}{n_j}[f(x_0)-f(x_{n_j})+\sum_{i=1}^{n_j}f(x_{i})] = \lim_{n_j} \frac{1}{n_j}[\sum_{i=1}^{n_j}f(x_{i})]=\int f\,d\nu,
\end{align*} 
then $\nu$ is $\sigma-$invariant. 

 By construction { of $R_{-}$ and $\nu_n$ we have} $\disp\int A\,d\nu_{n}=\int R_{-}\,d\nu_n=0$, thus $\disp\int A\,d\nu=0$. Therefore $\nu$ is $A$-maximizing.

Now we fix any point $a\in \operatorname{supp}(\nu)$. Note that $a$ belongs to $\CM\subset\Om$.  By definition of support we have $\nu( \{z|d(z,a)\leq \epsilon\})>0$ for any $\epsilon>0$. It follows that, for arbitrarily large $n$ we have also $\nu_n(\{z:d(z,a)\leq \epsilon\}) >0$, which proves that $a$ is an accumulation point of the sequence $(x_n)$. By considering a subsequence $(x_{n_k})$ satisfying  $x_{n_k}\to a$ we get
\begin{align*}
 S(a,x_0) &\geq \lim_{n_k\to\infty}[S_{n_k}(A)(x_{n_k}) ]\\
&=\lim_{n_k\to\infty} [S_{n_k}(R_{-})(x_{n_k}) -V(x_{n_k})+V(x_0)]\\
& =V(x_0)-V(a).
\end{align*}
This proves 
\[V(x_0) \leq \sup_{a\in \mathcal{M}} [V(a) +  S(a,x_0)].\]
\end{proof}

\section{Proof of Theorem \ref{theoA}}\label{secproofthA}
We remind that $A$ is assumed to be Lipschitz continuous, non-positive and satisfying $m(A)=0$. 
The Aubry set is defined by 
$$\Om:=\{x\in X,\ S(x,x)=0\},$$
where $S(.,.)$ is the Ma\~né potential (see Def. \ref{def-maneaubrymather}). 

\subsection{Technical results for the irreducible pieces of $\Omega$}


Let us consider a decomposition of the Aubry set in irreductible sub-shifts of finite type 
\[\Omega = \Sigma_1 \cup ...\cup \Sigma_k \cup\Sigma_{k+1}\cup...\cup \Sigma_{L}\]
where $\Sigma_i \cap \Sigma_j =\emptyset$ for $i\neq j$, $h_{top}(\Sigma_i) = h$ if $i\in\{1,...,k\}$ and $h_{top}(\Sigma_i)<h$ if $i\in\{k+1,...,L\}$.

It follows that $A=0$ over the sets $\Sigma_i$ where $i\in\{1,...,L\}$.

\begin{lemma}\label{lem-compact}
	For each $i\in\{1,...,L\},$ the set $\s^{-1}(\S_{i})\setminus\S_{i}$ is compact.
\end{lemma}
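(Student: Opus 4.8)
The plan is to establish the stronger fact that, for each $i$, the piece $\Sigma_i$ is a \emph{relatively clopen} subset of $\sigma^{-1}(\Sigma_i)$. Once this is known the lemma is immediate: since $\Sigma_i$ is closed in the compact space $X$ and $\sigma$ is continuous, $\sigma^{-1}(\Sigma_i)$ is closed in $X$, hence compact; and if $\Sigma_i$ is relatively closed in $\sigma^{-1}(\Sigma_i)$, then so is its relative complement $\sigma^{-1}(\Sigma_i)\setminus\Sigma_i$, which is therefore a closed subset of $X$ and hence compact.

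To prove that $\Sigma_i$ is relatively clopen in $\sigma^{-1}(\Sigma_i)$ I would use the finite-type hypothesis, as follows. Let $F_i$ be a finite set of words such that $\Sigma_i$ is exactly the set of $y\in X$ containing no word of $F_i$, and set $M:=\max\{|w|:w\in F_i\}$ (with $M:=1$ if $F_i=\emptyset$). Let $C\subseteq X$ be the set of points $x$ whose length-$M$ prefix $(x_0,\dots,x_{M-1})$ contains no word of $F_i$; since membership in $C$ depends only on the first $M$ coordinates, $C$ is a finite union of cylinders and is therefore clopen in $X$. The key claim is that
\[\Sigma_i=\sigma^{-1}(\Sigma_i)\cap C .\]
The inclusion $\subseteq$ is clear: a point avoiding $F_i$ keeps avoiding $F_i$ after one application of $\sigma$, so $x\in\Sigma_i$ forces $\sigma(x)\in\Sigma_i$, and $x\in C$ trivially. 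For $\supseteq$, suppose $\sigma(x)\in\Sigma_i$, $x\in C$, and that some $w\in F_i$ occurs in $x$ as $(x_j,\dots,x_{j+|w|-1})$; if $j\geq1$ then $w$ occurs in $\sigma(x)=(x_1,x_2,\dots)$, contradicting $\sigma(x)\in\Sigma_i$, while if $j=0$ then, as $|w|\leq M$, $w$ lies inside the prefix $(x_0,\dots,x_{M-1})$, contradicting $x\in C$; hence $x\in\Sigma_i$.

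With the claim in hand, the identity $\Sigma_i=\sigma^{-1}(\Sigma_i)\cap C$ with $C$ clopen exhibits $\Sigma_i$ as relatively clopen in $\sigma^{-1}(\Sigma_i)$, so that
\[\sigma^{-1}(\Sigma_i)\setminus\Sigma_i=\sigma^{-1}(\Sigma_i)\cap (X\setminus C)\]
is an intersection of two closed subsets of $X$, hence closed and hence compact, for every $i\in\{1,\dots,L\}$. I do not expect any serious obstacle here; the one point requiring care — and the only place the word ``finite'' in ``subshift of finite type'' is used — is the observation that, given $\sigma(x)\in\Sigma_i$, whether $x$ itself lies in $\Sigma_i$ is decided by the finitely many coordinates $x_0,\dots,x_{M-1}$, so that this condition cuts out a clopen (rather than merely closed) subset.
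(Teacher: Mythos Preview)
Your proof is correct and follows essentially the same approach as the paper: both arguments reduce to showing that $\sigma^{-1}(\Sigma_i)\setminus\Sigma_i$ is closed in $X$, and both use the finite-type hypothesis via the observation that, once $\sigma(x)\in\Sigma_i$, whether $x\in\Sigma_i$ is decided by the first $l$ (respectively $M$) coordinates of $x$. The only cosmetic difference is that the paper phrases this with \emph{allowed} $l$-cylinders and a sequential closure argument, whereas you phrase it with \emph{forbidden} words and name the clopen set $C$ explicitly, obtaining the slightly stronger (though not needed here) conclusion that $\Sigma_i$ is relatively clopen in $\sigma^{-1}(\Sigma_i)$.
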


	\begin{proof}
		As $X$ is compact, we only need to prove that 	$\s^{-1}(\S_{i})\setminus\S_{i}$ is closed. Let $(y_n)$ be a sequence in $\s^{-1}(\S_{i})\setminus\S_{i}$  converging to  $y$ in $X$. We will prove that $y\in  \s^{-1}(\S_{i})\setminus\S_{i}$.	
		
		As $\S_{i}$ is a sub-shift of finite type we can find some positive integer $l$ and a collection of cylinders of length $l$, say $\CA$, coinciding with the admissible words (of length $l$) for $\S_{i}$. That is that $x$ belongs to $\S_{i}$  if and only if for every $n\ge 0$, $\s^{n}(x)$ belongs to one of the cylinders in $\CA$. 
		
		By continuity, $\s(y_{n})$ converges to $\s(y)$. Furthermore, $\s(y_{n})$ belongs to the closed set $\S_{i}$, by hypothesis, and thus $\s(y)$ also belongs to $\S_{i}$. 
		
		As $y_n\to y$, we can assume that all the $y_{n}$ belong to the same $l$-cylinder than $y$. As $\s(y_n) \in \S_{i}$ and $y_n\notin \S_i$ this cylinder does not belong to the collection $\CA$  and so $y\notin \S_i$.  Therefore, $y\in 	\s^{-1}(\S_{i})\setminus\S_{i}$
	\end{proof}

%
%
%
%
%
%


\subsection{Another characterization for the $\S_{i}$'s}
We recall that the Aubry set $\Omega$ is defined as the set of points $x$ satisfying 
$S(x,x)=0,$  where the function $S$ is the Ma\~{n}\'{e} potential. 

We define an equivalence relation by 
$$x \sim y\iff S(x,y)+S(y,x)=0.$$

Our main result for this section (Prop. \ref{prop-componentenmatch}) is that equivalences classes for that relation coincide with the decomposition of $\Omega$ in irreducible components.

Let us first recall a classical result for the Ma\~né potential:
	
	\begin{lemma} \label{lema : Mane} Let $S$ be the Ma\~{n}\'{e} potential. We have:
	\begin{enumerate}
\item  $S(x,y)+S(y,z)\le S(x,z)$ for every $x$, $y$, $z$ in $X$;
\item for any fixed $x\in X$, the map $y\mapsto S(x,y)$ is Lipschitz continuous;
\item if $x\sim y$, then $S(x,.)=S(y,.)$;
\item if $\s(x)=y$, then $S(x,y)=A(x)$.
\end{enumerate}
\end{lemma}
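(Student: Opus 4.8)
The plan is to establish the four items in the order $(4)$, $(1)$, $(3)$, $(2)$, since $(3)$ is purely formal once $(1)$ is available and $(4)$ uses only that $A$ is non-positive. \textbf{Item $(4)$.} Taking $z=x$ and $n=1$ in the definition of the Ma\~n\'e potential gives $S^{\eps}(x,y)\ge S_1(A)(x)=A(x)$ for every $\eps>0$, hence $S(x,y)\ge A(x)$. Conversely, for any admissible $z$ with $\s^n(z)=y$ and $d(x,z)<\eps$ we have $S_n(A)(z)=A(z)+\sum_{j=1}^{n-1}A(\s^j z)\le A(z)\le A(x)+Lip(A)\,\eps$, because every term $A(\s^j z)$ is non-positive; letting $\eps\downarrow 0$ yields $S(x,y)\le A(x)$.

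\textbf{Item $(1)$.} Fix $x,y,z$. For small $\eps$ I would choose an admissible $\ol\xi$ with $d(x,\ol\xi)<\eps$, $\s^{k}(\ol\xi)=y$ and $S_k(A)(\ol\xi)$ within $\eps$ of $S(x,y)$, and an admissible $\ol\eta$ with $d(y,\ol\eta)<\eps$, $\s^{l}(\ol\eta)=z$ and $S_l(A)(\ol\eta)$ within $\eps$ of $S(y,z)$. Since $\s^{k}(\ol\xi)=y$, the word $\ol\xi$ has $y$ as its tail after position $k$; replacing that tail by $\ol\eta$ (which is admissible because $\ol\eta$ begins like $y$) produces an admissible $\ol\zeta$ with $\s^{k+l}(\ol\zeta)=z$ that coincides with $\ol\xi$ on its first $k$ symbols and with $\ol\eta$ beyond. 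One checks that $d(x,\ol\zeta)$ stays controlled by $\eps$ and, since $A$ is Lipschitz and $\ol\zeta$ agrees with $\ol\xi$ on more and more symbols as $\ol\eta\to y$, that $|S_{k+l}(A)(\ol\zeta)-S_k(A)(\ol\xi)-S_l(A)(\ol\eta)|$ is $o(1)$. Feeding the resulting sequences into Lemma~\ref{lem-techmane} gives $S(x,z)\ge S(x,y)+S(y,z)$; the case where the near-optimal orbits stay of bounded length is disposed of by continuity of $\s$ and of $A$. \textbf{Item $(3)$.} If $S(x,y)+S(y,x)=0$ then, since $S\le 0$ (noted just after Definition~\ref{def-maneaubrymather}), necessarily $S(x,y)=S(y,x)=0$. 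Using $(1)$ twice we get, for every $w$, $S(x,w)\ge S(x,y)+S(y,w)=S(y,w)$ and $S(y,w)\ge S(y,x)+S(x,w)=S(x,w)$, hence $S(x,\cdot)=S(y,\cdot)$.

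\textbf{Item $(2)$.} Fix $x$ and take $y,y'$ whose first disagreement occurs at position $q$, so that $d(y,y')=\th^{q}$. Given a near-optimal admissible $z$ for $S^{\eps}(x,y)$ with $\s^{n}(z)=y$, replace its tail $y$ by $y'$ to obtain an admissible $z'$ with $\s^{n}(z')=y'$ that agrees with $z$ on the first $n+q$ symbols. Then $d(\s^{j}z,\s^{j}z')\le\th^{\,n+q-j}$ for $0\le j<n$, so $|S_n(A)(z)-S_n(A)(z')|\le Lip(A)\sum_{j=0}^{n-1}\th^{\,n+q-j}\le\frac{Lip(A)\,\th}{1-\th}\,d(y,y')$. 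Checking that $z'$ remains in the prescribed neighbourhood of $x$ (immediate when $n$ is large; the degenerate case $z=x$ with small $n$ handled by hand as in $(1)$), optimizing over $z$ and letting $\eps\downarrow 0$ gives $S(x,y')\ge S(x,y)-\frac{Lip(A)\,\th}{1-\th}d(y,y')$; exchanging $y$ and $y'$ shows that $y\mapsto S(x,y)$ is Lipschitz with constant at most $\frac{Lip(A)\,\th}{1-\th}$.

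I expect the main obstacle to be the bookkeeping in $(1)$ and $(2)$: ensuring the concatenated and surgically modified words are genuinely admissible in the subshift of finite type and remain in the correct metric neighbourhoods of $x$, and disposing cleanly of the degenerate situation in which the optimizing orbits stay of bounded length (equivalently, $y$ already lies on the forward orbit of points arbitrarily close to $x$). Items $(3)$ and $(4)$, by contrast, are short once the non-positivity of $S$ and of $A$ is used.
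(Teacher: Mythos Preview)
Your arguments for items (3) and (4) are exactly the paper's: the paper proves (4) by the same two inequalities (using non-positivity of $A$ for the upper bound and $z=x$, $n=1$ for the lower bound) and proves (3) by the identical chain $S(x,z)\ge S(x,y)+S(y,z)=S(y,z)\ge S(y,x)+S(x,z)=S(x,z)$.

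For items (1) and (2) the paper does not give a proof at all: it simply writes ``For items 1 and 2 see \cite{BLL}.'' Your concatenation argument for (1) and tail-surgery argument for (2) are the standard ones found in that reference, and the obstacles you flag (admissibility of the spliced word in the subshift of finite type, control of $d(x,\ol\zeta)$, and the degenerate bounded-$n$ case) are precisely the bookkeeping that has to be done there. One small remark on (1): since you may take $\ol\eta$ arbitrarily close to $y$, the concatenated word $\ol\zeta$ agrees with $\ol\xi$ on as many symbols as you like beyond position $k$, so $d(x,\ol\zeta)$ is controlled regardless of whether $k$ is large; the bounded-length caveat is thus not really needed for (1). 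For (2) the situation is genuinely more delicate when $n$ is small (the modified point $z'$ may sit at distance $\th^{n+q}$ from $x$, which need not be below $\eps$), and the clean way out in \cite{BLL} is to treat that finite-$n$ contribution separately; your parenthetical acknowledges this without spelling it out.
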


\begin{proof} For items 1 and 2 see \cite{BLL}. 

In order to prove 3, just observe that $S(x,y)+S(y,x)=0$ and $S(.,.)\le 0$ yields 
$$S(x,y)=S(y,x)=0.$$ 
Hence, for any $z\in X$,
	\[S(x,z) \stackrel{1.}{\geq} S(x,y)+S(y,z) = S(y,z)  \stackrel{1.}{\geq} S(y,x)+S(x,z) = S(x,z).\]
	
The proof of item 4 has two steps. First, note that if $	z$ is such that $\s^{n}(z)=y$ and $d(x,z)<\eps$, then 
$$S_{n}A(z)\le A(z)\le A(x)+Cd(x,z) \le A(x)+C\eps.$$
{This yields for any positive $\eps$:
$$S^{\eps}(x,y)=\sup\{S_n(A)(z),\ \s^{n}(z)=y,\ d(x,z)<\eps\}\le { A(x)+C\eps}.$$
Doing $\eps\downarrow0$ we get $S(x,y)\le A(x)$. 

\medskip
On the other hand, as $\s(x)=y$, 
$$\sup\{S_n(A)(z),\ \s^{n}(z)=y,\ d(x,z)<\eps\}\ge A(x),$$
which yields the other inequality. }
\end{proof}

\begin{proposition}
	\label{prop-componentenmatch}
	The irreducible components {$\Sigma_i$, $i\in\{1,...,L\}$,} of the Aubry set as sub-shift of finite type coincide with the classes of  equivalence, which are defined by the equation 
	$$S(x,y)+S(y,x)=0.$$
\end{proposition}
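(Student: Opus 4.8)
The statement asserts an equality of two partitions of $\Omega$: the irreducible SFT components $\Sigma_i$, and the equivalence classes of the relation $x\sim y\iff S(x,y)+S(y,x)=0$. I would prove each class is contained in a single component, and each component is contained in a single class; since both are partitions of the same set $\Omega$, this forces them to coincide. Before anything else I would record that $\sim$ is genuinely an equivalence relation on $\Omega$: reflexivity is the definition of $\Omega$ ($S(x,x)=0$), symmetry is obvious, and transitivity follows from item 1 of Lemma \ref{lema : Mane} together with $S\le 0$ (if $S(x,y)+S(y,x)=0$ and $S(y,z)+S(z,y)=0$, then $S(x,z)\ge S(x,y)+S(y,z)$ and $S(z,x)\ge S(z,y)+S(y,x)$, and adding gives $S(x,z)+S(z,x)\ge 0$, hence $=0$).

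\textbf{Components are contained in classes.} Fix an irreducible component $\Sigma_i$ and two points $x,y\in\Sigma_i$. Since $\Sigma_i$ is an irreducible SFT on which $A\equiv 0$ (noted just before Lemma \ref{lem-compact}), I would use irreducibility to connect $x$ to $y$: there is an admissible finite word joining a prefix of $x$ to $y$, producing a point $z$ close to $x$ with $\sigma^n(z)=y$ whose orbit segment of length $n$ stays in $\Sigma_i$, so $S_n(A)(z)$ differs from $0$ only by the Lipschitz error coming from the fixed connecting word times the distance, which $\to 0$. Letting the prefix grow, Lemma \ref{lem-techmane} (with $\bar\xi_n\to x$, $\sigma^{k_n}(\bar\xi_n)=y$, $k_n\to\infty$) gives $S(x,y)\ge 0$, hence $S(x,y)=0$; symmetrically $S(y,x)=0$, so $x\sim y$. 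Thus each $\Sigma_i$ lies in one $\sim$-class.

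\textbf{Classes are contained in components.} Conversely, suppose $x\sim y$ with $x,y\in\Omega$; I must show they lie in the same $\Sigma_i$. The idea is that $x\sim y$ means $x$ and $y$ can be approximated by orbit segments back and forth at Ma\~n\'e-cost tending to $0$, which pins $x$ and $y$ to the same recurrence class. Concretely, I would use item 3 of Lemma \ref{lema : Mane}: $x\sim y$ implies $S(x,\cdot)=S(y,\cdot)$, and similarly $S(\cdot,x)=S(\cdot,y)$. In particular the set $\{z\in X: S(x,z)+S(z,x)=0\}$ equals $\{z: S(y,z)+S(z,y)=0\}$, i.e. $x$ and $y$ have the same $\sim$-class; since each $\Sigma_j$ we already showed lies in one class and the $\Sigma_j$ exhaust $\Omega$, the class containing $x$ is a union of some of the $\Sigma_j$. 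It remains to rule out a class meeting two distinct components $\Sigma_i,\Sigma_j$. If $x\in\Sigma_i$, $y\in\Sigma_j$ and $x\sim y$, then $S(x,y)=S(y,x)=0$; this says there are orbit segments from near $x$ to $y$ and from near $y$ to $x$ with Birkhoff sums $\to 0$. Concatenating and using $A=0$ on $\Omega$ plus the Lipschitz control, one builds, for arbitrarily large $N$, periodic-like orbit segments passing through both $\Sigma_i$ and $\Sigma_j$ with $S_N(A)$-value $\to 0$; this produces an $A$-maximizing measure (as an accumulation point of the associated empirical measures, exactly as in the proof of Lemma \ref{mane}) whose support meets both $\Sigma_i$ and $\Sigma_j$ and is connected by the shift, contradicting that $\Sigma_i\cap\Sigma_j=\emptyset$ and that each is an invariant irreducible SFT. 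Hence the class is a single $\Sigma_i$.

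\textbf{Main obstacle.} The routine direction is ``components $\subseteq$ classes''; the delicate point is the converse, specifically showing that a $\sim$-equivalence between points of two \emph{a priori} different components is impossible. The technical heart is turning the two vanishing Ma\~n\'e quantities $S(x,y)=S(y,x)=0$ into an actual $\sigma$-invariant measure (or a genuine orbit in $\Omega$) whose support is not contained in any single $\Sigma_i$ — i.e. transferring ``approximate'' recurrence at zero cost into ``exact'' recurrence. This is exactly the empirical-measure / upper-semicontinuity argument used in Lemma \ref{mane}, so I would model that step on it; the care needed is in the bookkeeping of the connecting words and the Lipschitz error terms so that the limiting measure is maximizing and its support genuinely straddles both components.
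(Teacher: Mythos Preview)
Your first direction (``components $\subseteq$ classes'') is essentially the paper's Step one, though you make it harder than necessary: if you build the approximating point $\bar\xi_n$ \emph{inside} $\Sigma_i$ (which irreducibility of $\Sigma_i$ as an SFT allows), then $S_{n+N(n)}(A)(\bar\xi_n)=0$ exactly, since $A\equiv 0$ on $\Omega$; no Lipschitz bookkeeping is needed.

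The genuine gap is in your converse. You propose to derive a contradiction from a maximizing measure ``whose support meets both $\Sigma_i$ and $\Sigma_j$ and is connected by the shift''. But a maximizing measure with support touching two different components is \emph{not} contradictory: any convex combination of maximizing measures is maximizing, so for instance $\tfrac12(\mu_1+\mu_2)$ with $\mu_k$ supported on $\Sigma_k$ already has this property. To get a contradiction you would need the support to contain a genuine \emph{transition point} outside all $\Sigma_k$ (which would contradict $\mathrm{supp}(\mu)\subset\Omega$), or to produce a single ergodic component straddling both $\Sigma_i$ and $\Sigma_j$; neither is justified by your sketch. There is a second, related gap: the argument in Lemma~\ref{mane} builds an invariant measure from an honest backward orbit $(x_n)$ with $\sigma(x_n)=x_{n-1}$, whereas your ``concatenation'' of segments from near $x$ to $y$ and from near $y$ to $x$ is only a pseudo-orbit, so the empirical-measure step does not transfer directly.

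The paper avoids both issues by a different, more local argument. Assuming $y\in\Sigma_i$, $x\notin\Sigma_i$ and $S(x,y)=0$, it takes the approximating orbit segments $\bar\xi_n\to x$ with $\sigma^{N(n)}(\bar\xi_n)=y$ and, along each, isolates the \emph{last-entry} point $\bar z_n:=\sigma^{N(n)-M(n)}(\bar\xi_n)\in\sigma^{-1}(\Sigma_i)\setminus\Sigma_i$. Compactness of $\sigma^{-1}(\Sigma_i)\setminus\Sigma_i$ (Lemma~\ref{lem-compact}) yields a limit $z\in\sigma^{-1}(\Sigma_i)\setminus\Sigma_i$; Lemma~\ref{lem-techmane} and Lipschitz continuity in the second variable give $S(z,y)=0$ and $S(x,z)=0$, whence $S(z,z)\ge S(z,y)+S(y,x)+S(x,z)=0$, so $z\in\Omega$. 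Then $z$ lies in some $\Sigma_k$, and since $\sigma(z)\in\Sigma_i$ and $\Sigma_k$ is $\sigma$-invariant, $k=i$, contradicting $z\notin\Sigma_i$. This pins down a \emph{single point} whose membership in $\Omega$ is forced, rather than relying on properties of a limiting measure; that is the missing idea in your proposal.
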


	\begin{proof} 	Let $x=(x_1,x_2,x_3,...)$ and $y=(y_1,y_2,y_3,...)$ be points of $X$.
		
		\textbf{Step one:} Suppose $x,y\in \Sigma_i$. { We  show that $S(x,y) = 0$.

	\medskip	
		As $\S_i$ is irreducible, for each $n$ there exists a word $z_1^n,...,z_{N(n)}^n$ such that 
		\[\ol\xi_{n} := (x_1,...,x_n,	z_1^n,...,z_{N(n)}^n, y_1,y_2,y_3,...) \in \S_i.\]
		As $\S_i\subset \Omega$ we get $S_{n+N(n)}(A)( \ol\xi_{n}) = 0$, $\lim_{n\to+\8}\ol\xi_{n}=x$, and $\s^{n+N_{n}}(\ol\xi_{n})=y$. Then Lemma \ref{lem-techmane} yields 
		$$0\ge S(x,y) \geq \lim_{n\to +\infty} S_{n+N(n)}(A)( \ol\xi_{n}) = 0.$$
		This shows that $S(x,y)=0$ holds. Similarly, exchanging the roles for $x$ and $y$ we get $S(y,x)=0$. 
		
\bigskip		
		\textbf{Step two:} Suppose $S(x,y)+S(y,x) = 0$. Item 1 in Lemma \ref{lema : Mane} yields $S(x,x)=S(y,y)=0$, hence $x,y\in \Omega$. 
		
		Let us suppose by contradiction that $y\in \S_i$ and $x\notin \S_i$.  As $S(x,y) = 0$ there exists a sequence of points $\ol\xi_{n}:=(x_{N(n)}^n,...,x_{1}^n,y_1,y_2,y_3,...), n\in \mathbb{N},$ which converges to $x$, such that $S_{N(n)}(A)(\ol\xi_{n})>-\frac{1}{n}.$ 
		
		As $\S_i$ is closed and $x\notin \S_i$ we have that $\ol\xi_{n} \notin \S_i$ for $n$ large enough. For simplicity we suppose that for every $n$, $\ol\xi_{n} \notin \S_i$. 
		
		As $y \in \S_i$ and $\S_{i}$ is invariant, we can consider $M(n)\le N(n)$ such that 
		$$\ol z_{n}:=\s^{N(n)-M(n)}(\ol\xi_{n})\notin \S_{i},\ \s(\ol z_{n}):=\s^{N(n)-M(n)+1}(\ol\xi_{n})\in \S_{i}.$$
		Then $\ol z_n \in \s^{-1}(\S_{i})\setminus\S_{i}$ which is compact, therefore there exists a converging subsequence for $(\ol z_n)$. Again, for simplicity we assume that $\ol z_{n}\to z$ holds. Note that $z \in  \s^{-1}(\S_{i})\setminus\S_{i}$ holds. Furthermore we have
		$$0\ge S_{M(n)}(A)(\ol z_{n})\ge S_{N(n)}(A)(\ol \xi_{n})>-\frac{1}{n}.$$
Then, Lemma \ref{lem-techmane} yields $0\ge S(z,y)\ge 0$, hence $S(z,y)=0$.

\medskip

As $\ol\xi_{n}$ converges to $x$ and $\s^{N(n)-M(n)}(\ol\xi_{n})=\ol z_{n}$ we get for any positive $\eps$ and for any sufficiently big $n$,  
$$S^{\eps}(x,\ol z_{n})\ge S_{N(n)-M(n)}(A)(\ol\xi_{n})>-\frac1n.$$
Then,   Lemma \ref{lem-techmane}  and Lipschitz continuity in the second variable for $S(x,.)$ yield
$$0\ge S(x,z)=\lim_{n}S(x,\ol z_{n})=0.$$

Finally, $S(z,z)\ge S(z,y)+S(y,x)+S(x,z)=0$ shows that $z$ belongs to some $\S_{k}$. So does $\s(z)$, hence $k=i$ and this produces a contradiction.

%
%
%
%
%
%
%
%
%
%
		
}\end{proof}

\begin{lemma}\label{V(sigma)}
If $V$ is a calibrated subaction for $A$ then $V$ is constant on each set $\Sigma_i$.	
\end{lemma}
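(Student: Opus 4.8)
The plan is to exploit Lemma \ref{mane}, which expresses any calibrated subaction $V$ via the Ma\~n\'e potential: $V(x_0)=\sup_{a\in\Omega}[V(a)+S(a,x_0)]$. The key observation is that within a single irreducible component $\Sigma_i$, the Ma\~n\'e potential vanishes: by Step one of the proof of Proposition \ref{prop-componentenmatch}, if $x,y\in\Sigma_i$ then $S(x,y)=S(y,x)=0$. So on $\Sigma_i$ the relation $\sim$ collapses everything to one equivalence class, and one expects $V$ to be forced constant there.

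Concretely, I would fix $i$ and two points $x,y\in\Sigma_i$. Since $V$ is a subaction, the inequality $V(y)\ge V(a)+S(a,y)$ for all $a$ (established in the proof of Lemma \ref{mane}) applies with $a=x$: using $S(x,y)=0$ we get $V(y)\ge V(x)+S(x,y)=V(x)$. Exchanging the roles of $x$ and $y$ and using $S(y,x)=0$ gives $V(x)\ge V(y)$. Hence $V(x)=V(y)$, so $V$ is constant on $\Sigma_i$. This holds for every $i\in\{1,\dots,L\}$, which is the claim.

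There is essentially no obstacle here beyond having the fact $S(x,y)=0$ for $x,y$ in the same irreducible component already available; since Proposition \ref{prop-componentenmatch} (and specifically its Step one) is proved just above, this is immediate. One small point to be careful about is that the subaction inequality $V(x_0)\ge V(a)+S(a,x_0)$ holds for \emph{all} $a,x_0\in X$, not merely for $a$ ranging over $\Omega$ — but this is exactly what was shown in the first half of the proof of Lemma \ref{mane} (from $S_n(A)(z)+V(z)\le V\circ\sigma^n(z)$, passing to the Ma\~n\'e potential). So the argument is a two-line consequence of material already in hand.

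\begin{proof}
Fix $i\in\{1,\dots,L\}$ and let $x,y\in\Sigma_i$. By Step one in the proof of Proposition \ref{prop-componentenmatch} we have $S(x,y)=S(y,x)=0$. On the other hand, as shown in the proof of Lemma \ref{mane}, since $V$ is a calibrated subaction it satisfies $V(v)\ge V(u)+S(u,v)$ for all $u,v\in X$. Applying this with $(u,v)=(x,y)$ gives $V(y)\ge V(x)+S(x,y)=V(x)$, and with $(u,v)=(y,x)$ gives $V(x)\ge V(y)+S(y,x)=V(y)$. Hence $V(x)=V(y)$. As $x,y\in\Sigma_i$ were arbitrary, $V$ is constant on $\Sigma_i$.
\end{proof}
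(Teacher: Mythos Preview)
Your proof is correct and essentially identical to the paper's: both fix $x,y\in\Sigma_i$, use $S(x,y)=S(y,x)=0$ from Proposition \ref{prop-componentenmatch}, and then apply the subaction inequality $V(\cdot)\ge V(a)+S(a,\cdot)$ (the paper phrases it via the sup-formula of Lemma \ref{mane}, you via the inequality established in its proof) in both directions to conclude $V(x)=V(y)$.
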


\begin{proof} 
	Let $x,y \in \S_i$ and $V$ be a calibrated subaction.
	As $S(x,y)=S(y,x) = 0$,  by applying Lemma \ref{mane} we have
	\[ V(x) =\sup_{a\in \Omega}[S(a,x)+V(a)] \geq S(y,x) +V(y) = V(y).\]
and	\[V(y) = \sup_{a\in \Omega}[S(a,y)+V(a)] \geq S(x,y) +V(x) = V(x).\]
\end{proof}

\subsection{A Max-Plus formula for the calibrated subaction $V$}

{ From now on, let us denote $S_{i}(.) := S(x,.),$ where $x$ is any point of $\S_i$ (see Lemma \ref{lema : Mane} item 3.). 
	If $\S_{i}$ and $\S_{j}$ are two irreducible components of $\Omega$ we set 
	$$a_{ij}:=\sup_{x\in \S_{i}}\sup_{y\in \s^{-1}(x),y\notin\S_{i}}A(y)+S_{j}(y)=\sup_{y\in \s^{-1}(\S_{i})\setminus\S_{i}}A(y)+S_{j}(y).$$}

%
%

We remember that we are supposing in this section that
\[\Omega = \Sigma_1 \cup ...\cup \Sigma_k \cup\Sigma_{k+1}\cup...\cup \Sigma_{L}\]
where $\Sigma_i \cap \Sigma_j =\emptyset$ for $i\neq j$, $h_{top}(\Sigma_i) = h$ if $i\in\{1,...,k\}$ and $h_{top}(\Sigma_i)<h$ if $i\in\{k+1,...,L\}$.

\begin{lemma}
	\label{lem-aijk}
	For every $i,j,l \in\{1,...,L\}$ the next holds:
	\begin{enumerate}
\item $-\8<a_{ij}\le 0$;
\item $a_{ii}<0$;
\item$a_{li}+a_{ij}\le a_{lj}$.
\end{enumerate}
\end{lemma}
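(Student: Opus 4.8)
The plan is to prove the three assertions about the quantities $a_{ij}=\sup_{y\in\s^{-1}(\S_i)\setminus\S_i}A(y)+S_j(y)$ by a mixture of compactness, the basic properties of the Ma\~n\'e potential collected in Lemma \ref{lema : Mane}, and the triangle-type inequality it satisfies. Throughout I will use that $A\le 0$, that $A=0$ and $S(\cdot,\cdot)=0$ on each $\S_i$ (Lemma \ref{lem-AonOm} and Proposition \ref{prop-componentenmatch}), that $S_j$ is well defined and Lipschitz continuous (Lemma \ref{lema : Mane} items 2 and 3), and that $\s^{-1}(\S_i)\setminus\S_i$ is compact (Lemma \ref{lem-compact}).

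For item 1, the upper bound $a_{ij}\le 0$ is immediate since $A\le 0$ and $S_j\le 0$ (the Ma\~n\'e potential is everywhere non-positive). For finiteness from below, I would argue that the function $y\mapsto A(y)+S_j(y)$ is continuous on $X$ — $A$ is Lipschitz and $S_j=S(x_j,\cdot)$ is Lipschitz for any fixed $x_j\in\S_j$ — hence it is bounded below on the compact set $\s^{-1}(\S_i)\setminus\S_i$; moreover this set is non-empty because $X$ is topologically mixing and $\S_i$ is a proper subshift, so the supremum is attained and is a finite real number. (If one wants to be careful about non-emptiness one notes that were $\s^{-1}(\S_i)\setminus\S_i$ empty, $\S_i$ would be all of $X$, contradicting $h_{top}(\S_i)\le h<h_{top}(X)$ or the aperiodicity hypothesis.)

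For item 2, I want $a_{ii}<0$, i.e. strict negativity. Take $y\in\s^{-1}(\S_i)\setminus\S_i$; then $\s(y)\in\S_i$ but $y\notin\S_i$. Since $\s(y)$ lies in $\S_i=\S_j$ here, $S_i(y)=S(\s(y),y)$ and by Lemma \ref{lema : Mane} item 4, $S(y,\s(y))=A(y)$; combining with the equivalence relation, if we had $A(y)+S_i(y)=0$ then $A(y)=0$ and $S(\s(y),y)=0$, which together with $S(y,\s(y))=A(y)=0$ forces $y\sim\s(y)$, hence $y\in\S_i$ by Proposition \ref{prop-componentenmatch}, a contradiction. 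Thus $A(y)+S_i(y)<0$ for every such $y$; since the supremum is attained on the compact set $\s^{-1}(\S_i)\setminus\S_i$, we get $a_{ii}<0$. The one point to be careful about is justifying that $A(y)+S_i(y)=0$ really does force both summands to vanish — this uses $A\le0$ and $S\le0$ separately.

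For item 3, the sub-additivity $a_{li}+a_{ij}\le a_{lj}$, I would pick $y\in\s^{-1}(\S_l)\setminus\S_l$ and $w\in\s^{-1}(\S_i)\setminus\S_i$ nearly realizing $a_{li}$ and $a_{ij}$ respectively. Here $S_i(y)=S(\s(y),y)$ with $\s(y)\in\S_i$, and $A(w)+S_j(w)$ is close to $a_{ij}$ with $\s(w)\in\S_i$. Since all points of $\S_i$ are equivalent, $S(\s(y),\cdot)=S_i(\cdot)$ and in particular $S(\s(y),w)=S_i(w)$, and then the triangle inequality of Lemma \ref{lema : Mane} item 1 gives
\[
A(y)+S_j(y)\ \ge\ A(y)+S(\s(y),y)+S(y,w)+S(w,\s(w))+\cdots
\]
— more precisely I would chain $S_l(y)+A(y)\le$ (something) using $S(y,w)\ge ?$; the cleaner route is: the concatenation that defines $a_{li}$ ends at a point of $\S_i$ and the one defining $a_{ij}$ starts at a point of $\S_i$, and since $S$ is constant ($=0$) on $\S_i\times\S_i$, gluing the two admissible constructions produces an admissible construction from near $\S_l$ to $\S_j$ whose cost is the sum, whence $a_{lj}\ge a_{li}+a_{ij}$. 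I expect this gluing step to be the main obstacle: one must check that concatenating the two near-optimal orbit segments, with a path inside $\S_i$ joining the endpoint of the first to the starting point of the second at zero cost, yields a point in $\s^{-1}(\S_l)\setminus\S_l$ whose Birkhoff sum to $\S_j$ is at least $a_{li}+a_{ij}-\eps$, using Lemma \ref{lem-techmane} to pass to the limit and Lemma \ref{lema : Mane} item 3 to identify $S$ restricted to $\S_i$. Letting $\eps\downarrow 0$ then gives item 3.
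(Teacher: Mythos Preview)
Your arguments for items 1 and 2 are essentially the paper's: compactness of $\s^{-1}(\S_i)\setminus\S_i$ plus continuity of $A+S_j$ gives finiteness, and for item 2 the key identity $A(y)=S(y,\s(y))$ together with Proposition \ref{prop-componentenmatch} gives strict negativity at the (attained) maximizer. The paper phrases item 2 slightly more directly --- it writes $a_{ii}=A(y)+S(x,y)=S(y,x)+S(x,y)<0$ in one line rather than arguing by contradiction --- but the content is the same.

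Item 3 is where your proposal has a genuine gap. First, there is an index slip: for $y\in\s^{-1}(\S_l)\setminus\S_l$ you have $\s(y)\in\S_l$, not $\S_i$, so the identification $S_i(y)=S(\s(y),y)$ is wrong. More importantly, the ``gluing'' strategy you sketch --- concatenating near-optimal orbit segments through $\S_i$ and invoking Lemma \ref{lem-techmane} --- is never carried out, and you yourself flag it as the main obstacle. The paper bypasses this obstacle entirely by working purely at the level of the Ma\~n\'e potential. Take exact maximizers $y_l\in\s^{-1}(\S_l)\setminus\S_l$ and $y_i\in\s^{-1}(\S_i)\setminus\S_i$ (these exist by compactness), set $x_i:=\s(y_i)\in\S_i$, and use Lemma \ref{lema : Mane} item 4 to rewrite $A(y_i)=S(y_i,x_i)$. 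Since $x_i\in\S_i$, item 3 of that lemma gives $S_i(y_l)=S(x_i,y_l)$. Then
\[
a_{li}+a_{ij}=A(y_l)+S(x_i,y_l)+S(y_i,x_i)+S_j(y_i)\le A(y_l)+S_j(y_l)\le a_{lj},
\]
where the middle inequality is two applications of the triangle inequality $S(u,v)+S(v,w)\le S(u,w)$. No orbit-gluing is needed: the triangle inequality for $S$ already encodes it.
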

\begin{proof} First, $A$ is non-positive, and thus so is $S$. Second, Lemma \ref{lema : Mane} yields each $S_{j}$ is Lipschitz continuous, hence bounded on the compact set  $\s^{-1}(\S_{i})\setminus\S_{i}$ {(see Lemma \ref{lem-compact})}. This also holds for $A$, and thus  each $a_{ij}$ is a real number. 
This finishes to prove 1. 	
	
	\bigskip
	
	\noindent
	Proof of 2. \newline 
	As $y\mapsto A({y})+S_{i}(y)$ is continuous on the compact set $\s^{-1}(\S_{i})\setminus\S_{i}$, it attains its maximum. 
	
	If $y\in \s^{-1}(\S_{i})\setminus\S_{i}$ is such that 
	$$a_{ii}=A(y)+S_{i}(y),$$
	then, setting  $x:=\sigma(y)$, we have $x\in \S_i$ and $a_{ii}=A(y)+S(x,y)$. 
	{By item 4 in Lemma \ref{lema : Mane},  $S(y,x)=A(y)$}. This yields, 
	$$a_{ii}={A(y)+S(x,y)= S(y,x)+S(x,y)}<0,$$
	since $y\notin\S_{i}$ (see Prop \ref{prop-componentenmatch}).  
	
	\bigskip
	
	\noindent
	Proof of 3. \newline 
	If $i=l$ we just need to apply item 2. Suppose then $i\neq l$ and let $y_{i}\in \s^{-1}(\S_{i})\setminus\S_{i}$ and $y_{l}\in \s^{-1}(\S_{l})\setminus\S_{l}$ be such that 
	$$a_{ij}=A(y_{i})+S_{j}(y_{i}),\quad a_{li}=A(y_{l})+S_{i}(y_{l}).$$ 
	Set $x_{i}:=\s(y_{i})\in \S_{i}$ and 
	note that $S_{i}(y_{l})=S(x_{i},y_{l})$ holds.  Again Lemma  \ref{lema : Mane} yields $S(y_{i},x_{i})=A(y_{i})$. Hence we get 
	\begin{eqnarray*}
		a_{li}+a_{ij}&=& A(y_{l})+S_{i}(y_{l})+A(y_{i})+S_{j}(y_{i})\\
		&= & A(y_{l})+S(x_{i},y_{l})+S(y_{i},x_{i})+S_{j}(y_{i})\\
		&\le & A(y_{l})+S_{j}(y_{l})\le a_{lj}. 
	\end{eqnarray*}
\end{proof}

\begin{remark}
	If $V$ is a calibrated subaction, then it is constant on each set $\S_{i}$ (by Lemma \ref{V(sigma)}). It thus makes sense to set $V(\S_{i})$ and we will use such notation from now on in the present section.
\end{remark}

{

\begin{proposition} 
Any accumulation point for $ \frac{1}{\beta}\log(P(\beta A) - h )$ belongs to $]-\8,0]$. Furthermore, if $\displaystyle{\lim_{\beta_j\to\infty} \frac{1}{\beta_j}\log(P(\beta_j A) - h )}=\ga$ and $\disp\lim_{\beta_j\to\infty} \frac{1}{\beta_{j}}\log(H_{\beta_{j}A})=V$, 
then, 
 \begin{equation}\label{eq - gamma and V} \gamma + V(\Sigma_i) = \max_{j\in\{1,...,L\}}V(\Sigma_j)+a_{ij},\,\,\,\forall\,i\in\{1,...,k\}\end{equation} 
 and
	\begin{equation}\label{eq - gamma and V - outra} V(\Sigma_i) = \max_{j\in\{1,...,L\}}V(\Sigma_j)+a_{ij},\,\,\,\forall\,i\in\{k+1,...,L\}.\end{equation} 
\end{proposition}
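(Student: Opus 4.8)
The plan is to analyze the eigenfunction equation for $H_{\beta A}$, restricted to points of the Aubry set, and pass to the limit $\frac1\beta\log$. First I would start from the identity that for any $x \in X$,
\[
\sum_{\sigma(z)=x} e^{\beta A(z) + \log H_{\beta A}(z) - \log H_{\beta A}(x) - P(\beta A)} = 1,
\]
and specialize $x$ to a point in an irreducible component $\Sigma_i$. Since $A \equiv 0$ on $\Omega$ (Lemma \ref{lem-AonOm}), the preimages $z$ of $x$ split into those lying in $\Sigma_i$ (where $A(z)=0$, and by Lemma \ref{V(sigma)} one expects $\frac1\beta \log H_{\beta A}(z) \to V(\Sigma_i)$) and those in $\sigma^{-1}(\Sigma_i)\setminus\Sigma_i$. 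For the latter, the key is to control $\frac1\beta \log H_{\beta A}(z)$ for $z$ near a given point $y \in \sigma^{-1}(\Sigma_i)\setminus\Sigma_i$ by iterating the eigenfunction inequality backward: $\log H_{\beta A}(z) \ge S_n(A)(w) + \log H_{\beta A}(w) - nP(\beta A)$ for $\sigma^n(w) = z$, and similarly a matching upper bound. Taking $\beta_j \to \infty$ along the chosen subsequence and using Lemma \ref{lmms} (with the conformal measure, whose support is all of $X$) plus the continuity of $S_j$, one should obtain that $\frac1\beta \log H_{\beta A}(y) \to V(\Sigma_j) + S_j(y)$ if $y$ is ``reached optimally'' from $\Sigma_j$, and in general that $\lim_j \frac1{\beta_j}\log H_{\beta_j A}(y) = \max_j [V(\Sigma_j) + S_j(y)]$ — this is just Lemma \ref{mane} read off along the subsequence, since $V$ is the limiting subaction.

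Next, I would insert these asymptotics into the eigenfunction identity at $x \in \Sigma_i$. Writing $P(\beta A) = h + e^{\beta\gamma_\beta}$ with $\frac1\beta\log(P(\beta A)-h) = \gamma_\beta \to \gamma$ along the subsequence (here I use that $P(\beta A) \downarrow h$, from Lemma \ref{pressureselect} applied with $B=0$, $m(A)=0$, and the fact that $h_{max}=h$ when $\Omega$ is an SFT containing all maximizing measures), the sum over preimages $z \in \Sigma_i$ contributes a term whose $\frac1\beta\log$ tends to $-\gamma$ (because the entropy-$h$ piece $\Sigma_i$ contributes $\sim$ the relevant spectral quantity cancelling against $e^{-\beta P(\beta A)}$, while the correction from $P(\beta A)-h = e^{\beta\gamma_\beta}$ dominates and produces the shift by $\gamma$ for components of maximal entropy, and produces $0$ in the limit for components $i > k$ of submaximal entropy, whose internal contribution already decays). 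The preimages $z \notin \Sigma_i$ contribute, after applying Lemma \ref{lem-summaxplus} / Lemma \ref{lmms},
\[
\max_{y \in \sigma^{-1}(\Sigma_i)\setminus\Sigma_i} \Big[ A(y) + \max_{j} (V(\Sigma_j)+S_j(y)) - V(\Sigma_i) \Big] = \max_j a_{ij} - V(\Sigma_i),
\]
using the definition $a_{ij} = \sup_{y\in\sigma^{-1}(\Sigma_i)\setminus\Sigma_i} A(y)+S_j(y)$. Balancing the two contributions in the max-plus (Laplace) limit of the identity ``$=1$'', i.e.\ the $\frac1\beta\log$ of a sum of two nonnegative terms being $0$, forces the maximum of the two limiting exponents to be $0$: for $i \le k$ this gives $\max(-\gamma, \max_j a_{ij} - V(\Sigma_i) - \gamma) = 0$, and since $-\gamma \ge 0$ would be needed but one can argue the off-diagonal term must match, one extracts \eqref{eq - gamma and V}; for $i > k$ the internal term is $\frac1\beta\log$-negligible, leaving $\max_j a_{ij} - V(\Sigma_i) = 0$, which is \eqref{eq - gamma and V - outra}. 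The statement that accumulation points of $\frac1\beta\log(P(\beta A)-h)$ lie in $]-\infty,0]$ follows because $P(\beta A)-h$ is bounded and decreasing to $0$, so $\gamma \le 0$, and $\gamma = -\infty$ is not excluded a priori at this stage but the finiteness on the maximal-entropy components via $a_{ij}$ finite (Lemma \ref{lem-aijk}) pins $\gamma > -\infty$ once one knows some $a_{ij}$ with $i \le k$ is attained by the max.

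The main obstacle I anticipate is making the backward-iteration estimate on $\frac1\beta\log H_{\beta A}(z)$ genuinely uniform and two-sided near a point $y \in \sigma^{-1}(\Sigma_i)\setminus\Sigma_i$, so that the interchange of limits (sum over preimages, Laplace asymptotics, $\beta \to \infty$) is legitimate — in particular, separating cleanly the contribution of the maximal-entropy ``bulk'' of $\Sigma_i$ (which behaves like the leading eigenvalue $e^{\beta\cdot 0 + h}$ of the transfer operator restricted to $\Sigma_i$, hence cancels $e^{-\beta P(\beta A)} \approx e^{-h}$ up to the $e^{\beta\gamma_\beta}$ correction) from the off-diagonal jumps. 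A secondary subtlety is justifying that $\lim_j \frac1{\beta_j}\log H_{\beta_j A} = V$ really does satisfy the Mañé representation $V(y) = \max_j[V(\Sigma_j)+S_j(y)]$ at the specific points $y$ we need, which is exactly Lemma \ref{mane} combined with Lemma \ref{existsubact} ensuring $V$ is a calibrated subaction, plus Lemma \ref{V(sigma)} to collapse the sup over $\Omega$ to a max over the finitely many components $\Sigma_j$ with the constant values $V(\Sigma_j)$; I would want to verify the $\limsup$/$\liminf$ match here rather than treat it as automatic. Once these asymptotics are in hand, the algebra producing \eqref{eq - gamma and V} and \eqref{eq - gamma and V - outra} is routine bookkeeping with Lemma \ref{lem-summaxplus}.
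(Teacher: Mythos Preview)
Your plan has a genuine gap: working pointwise with the eigenfunction identity at a single $x\in\Sigma_i$ and then passing to $\frac1\beta\log$ only recovers the calibrated-subaction equation, not the refined relation involving $\gamma$. Indeed, at leading order the inner sum $\sum_{z\in\sigma^{-1}(x)\cap\Sigma_i}H_{\beta A}(z)$ has $\frac1\beta\log$ tending to $\max_{z}V(z)=V(\Sigma_i)$, and $\frac1\beta\log e^{P(\beta A)}\to m(A)=0$. The quantity $\gamma$ is a \emph{second-order} correction (the rate at which $P(\beta A)-h\to0$), and it is invisible after a naive Laplace limit of a sum of finitely many exponentials. Your sentence ``the entropy-$h$ piece $\Sigma_i$ contributes $\sim$ the relevant spectral quantity cancelling against $e^{-\beta P(\beta A)}\approx e^{-h}$'' is where the argument breaks: there is no such cancellation pointwise, and the balancing equation you write, $\max(-\gamma,\ \max_j a_{ij}-V(\Sigma_i)-\gamma)=0$, has no justification --- the $-\gamma$ on the inner term never appears.

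The missing idea (and this is exactly what the paper does) is to integrate the eigenfunction identity against the eigen-measure $\nu_i$ of the \emph{restricted} transfer operator $\mathcal{L}_i(\varphi)(x)=\sum_{y\in\sigma^{-1}(x)\cap\Sigma_i}\varphi(y)$ on $\Sigma_i$. Since $A\equiv0$ on $\Sigma_i$, the inner sum is $\mathcal{L}_i(H_{\beta A})(x)$, and the eigen-measure property $\int\mathcal{L}_i\varphi\,d\nu_i=e^{h_i}\int\varphi\,d\nu_i$ collapses the integrated inner contribution to $e^{h_i}\int H_{\beta A}\,d\nu_i$. One can then factor:
\[
(e^{P(\beta A)}-e^{h_i})\int H_{\beta A}\,d\nu_i=\int\sum_{y\in\sigma^{-1}(x)\setminus\Sigma_i}e^{\beta A(y)}H_{\beta A}(y)\,d\nu_i(x).
\]
Now $\frac1\beta\log(e^{P(\beta A)}-e^{h_i})\to\gamma$ when $h_i=h$ (and $\to0$ when $h_i<h$), and Lemma~\ref{lmms} applied on both sides (with $\mathrm{supp}(\nu_i)=\Sigma_i$) produces exactly \eqref{eq - gamma and V} and \eqref{eq - gamma and V - outra}. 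The rest of your outline --- using Lemma~\ref{mane} and Lemma~\ref{V(sigma)} to rewrite $V(y)=\max_j[V(\Sigma_j)+S_j(y)]$, and deducing $\gamma>-\infty$ from finiteness of the $a_{ij}$ --- is correct and matches the paper.
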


\begin{proof} 

As it is said above (see Lemma \ref{pressureselect}), $\be\mapsto P(\be A)-h$ is non-increasing  and goes to 0 as $\be\to+\8$. Hence, $\be\mapsto\disp \frac1\be\log(P(\be A)-h)$ is also non-increasing  and it is negative for sufficiently large $\be$. This shows that any accumulation point for $\disp\frac1\be\log(P(\be A)-h)$ is in $[-\8,0]$.  

Because $V$ is Lipschitz continuous on $\Om$, thus bounded, and beause all the $a_{ij}$ are real numbers, Equation \eqref{eq - gamma and V} shows that $\ga$ is a real number, hence $\ga>-\8$. 

\bigskip
Let us now consider the Ruelle operator for $\beta A$, given by $${L}_{\beta A}(\varphi)(x) = \sum_{y\in \sigma^{-1}(x)}e^{\beta A(y)}\varphi(y).$$

For each $i\in\{1,...,L\}$,  we can also consider the Ruelle operator for the zero potential acting over $\CC(\Sigma_i)$:
$$\mathcal{L}_{i}(\varphi)(x) = \sum_{y\in \sigma^{-1}(x),\,y\in \Sigma_i}\varphi(y)$$ where $x\in \Sigma_i$.
As we suppose that  $\Sigma_i$ is an irreducible sub-shift of finite type, there exists an eigen-measure $\nu_i$ with $\supp(\nu_i) = \Sigma_i$ such that
\[e^{h_i} \int \varphi(x) d\nu_i(x) = \int \mathcal{L}_{i}(\varphi)(x) \,d\nu_i(x),\]
where $h_i$ is the topological entropy of $\Sigma_i$, which corresponds to the pressure of the zero function on $\Sigma_i$.

We start by supposing $i\in\{1,...,k\}$.	Given $x\in \Sigma_i$ we have
	\[e^{P(\beta A)}H_{\beta A}(x) = \sum_{y\in \sigma^{-1}(x),\,y\in \Sigma_i}e^{\beta A(y)}H_{\be A}(y) +  \sum_{y\in \sigma^{-1}(x),\,y\notin \Sigma_i}e^{\beta A(y)}H_{\be A}(y)\]
	and using that $A=0 $ in $\Sigma_i$ we get
	\[e^{P(\beta A)}H_{\beta A}(x)=\mathcal{L}_{i}(H_{\be A})(x) +  \sum_{y\in \sigma^{-1}(x),\,y\notin \Sigma_i}e^{\beta A(y)}H_{\be A}(y).\]
	Integrating both sides with respect to the eigenmeasure $\nu_i$ of $\mathcal{L}_{i}$ we have
	\[e^{P(\beta A)}\int H_{\beta A}(x)\,d\nu_i(x) = \int \mathcal{L}_{i}(H_{\be A})(x)\,d\nu_i(x) +  \int\sum_{y\in \sigma^{-1}(x),\,y\notin \Sigma_i}e^{\beta A(y)}H_{\be A}(y)\,d\nu_i(x)\]
	{ i.e.}
	\[e^{P(\beta A)}\int H_{\beta A}(x)\,d\nu_i(x) = e^{h}\int H_{\be A}(x)\,d\nu_i(x) +  \int\sum_{y\in \sigma^{-1}(x),\,y\notin \Sigma_i}e^{\beta A(y)}H_{\be A}(y)\,d\nu_i(x).\]
	Then
	\begin{equation}\label{ja1}
		(e^{P(\beta A)-h}-1)e^{h}\int H_{\beta A}(x)\,d\nu_i(x) =  \int\sum_{y\in \sigma^{-1}(x),\,y\notin \Sigma_i}e^{\beta A(y)}H_{\be A}(y)\,d\nu_i(x).
	\end{equation}
	Taking $\lim_{\beta_{j}\to+\infty}1/\beta_{j}\log(\cdot)$ in both sides of \eqref{ja1} we get
	\begin{equation}\label{ja2}
		\gamma +0+V(\Sigma_i) = \sup_{x\in \Sigma_i}\sup_{y\in\sigma^{-1}(x),y\notin\Sigma_i} A(y) + V(y),
	\end{equation}
	where for both integrals we use Lemma \ref{lmms} and that $\nu_i$ has  full support in $\Sigma_i$.


\bigskip
Now, it follows from Lemmas \ref{mane} and \ref{V(sigma)} that  $$V(x) = \max_{j\in\{1,...,L\}} V(\Sigma_j) +S_j(x)$$ holds where $S_j$ is the Ma\~{n}\'{e} potential with respect to $\Sigma_j$.
	Therefore,  Equation \eqref{ja2} can be rewritten as
	\begin{align*}
		\gamma + V(\Sigma_i) &= \sup_{x\in \Sigma_i}\sup_{y\in\sigma^{-1}(x),y\notin\Sigma_i} A(y) + \left(\max_{j\in\{1,...,k\}} V(\Sigma_j) +S_j(y)\right)\\
		&= \sup_{x\in \Sigma_i}\sup_{y\in\sigma^{-1}(x),y\notin\Sigma_i} \max_{j\in\{1,...,L\}}( A(y) +  V(\Sigma_j) +S_j(y))\\
		&= \max_{j\in\{1,...,L\}} \sup_{x\in \Sigma_i}\sup_{y\in\sigma^{-1}(x),y\notin\Sigma_i} ( A(y) +  V(\Sigma_j) +S_j(y))\\
		&=  \max_{j\in\{1,...,L\}}V(\Sigma_j)+\left(\sup_{x\in \Sigma_i}\sup_{y\in\sigma^{-1}(x),y\notin\Sigma_i} A(y)+ S_j(y)\right).
	\end{align*} 
	Then we have
	\begin{equation*} \gamma + V(\Sigma_i) = \max_{j\in\{1,...,L\}}V(\Sigma_j)+a_{ij}.\end{equation*} 
	
\bigskip	
The proof for the case $i\in\{k+1,...,L\}$ is similar, except that $\ga$ disappears.  Indeed, if $\S_{i}$ has entropy $h'<h$, then \eqref{ja1} can be rewritten as:
	\begin{equation*}
		(e^{P(\beta A)-h'}-1)e^{h'}\int H_{\beta}(x)\,d\nu_i(x) =  \int\sum_{y\in \sigma^{-1}(x),\,y\notin \Sigma_i}e^{\beta A(y)}H_{\be A}(y)\,d\nu_i(x).
	\end{equation*}
	The rest of the computations holds, except that $\gamma$ has to be removed. 
	Hence, we get:
	\begin{equation*}
		V(\S_{i})=\max_{j\in\{1,...,L\}}V(\S_{j})+a_{ij}.
	\end{equation*}
	
	\end{proof} }

\subsection{Component with small entropy do not play a role}

We remember that $h_{top}(\Sigma_i) = h$ if $i\in\{1,...,k\}$ and $h_{top}(\Sigma_i)<h$ if $i\in\{k+1,...,L\}$.
\begin{proposition}
	\label{prop-maxplusSiSJ}
	If $i\in\{k+1,...,L\}$ then  for every $l\in\{1,...,L\}$, there exists $j\in\{1,...,k\}$ such that  
	$$V(\S_{i})+a_{li}\le V(\S_{j})+a_{lj}.$$
\end{proposition}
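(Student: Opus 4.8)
The goal is to show that components of small entropy are "dominated" by components of maximal entropy in the Max-Plus relation. The natural strategy is to iterate the defining relation \eqref{eq - gamma and V - outra} for small-entropy components, using the monotonicity and subadditivity of the cost matrix $(a_{ij})$ from Lemma \ref{lem-aijk}, together with the fact that $a_{ii}<0$, to "push forward" any maximizer until it lands on a maximal-entropy component.

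\medskip

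Fix $i\in\{k+1,\ldots,L\}$ and $l\in\{1,\ldots,L\}$. By \eqref{eq - gamma and V - outra}, there exists $j_{1}\in\{1,\ldots,L\}$ with $V(\Sigma_i)=V(\Sigma_{j_1})+a_{ij_1}$. First I would observe that $j_1\neq i$: indeed if $j_1=i$ then $V(\Sigma_i)=V(\Sigma_i)+a_{ii}$, forcing $a_{ii}=0$, which contradicts item 2 of Lemma \ref{lem-aijk}. Now combine this with item 3 (subadditivity) of Lemma \ref{lem-aijk}: since $V(\Sigma_i)=V(\Sigma_{j_1})+a_{ij_1}$, we get
\[
V(\Sigma_i)+a_{li}\le V(\Sigma_{j_1})+a_{ij_1}+a_{li}\le V(\Sigma_{j_1})+a_{lj_1},
\]
where the last inequality is exactly $a_{li}+a_{ij_1}\le a_{lj_1}$. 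So replacing $i$ by $j_1$ weakly increases the quantity $V(\Sigma_\cdot)+a_{l\cdot}$. If $j_1\in\{1,\ldots,k\}$ we are done, taking $j=j_1$. Otherwise $j_1\in\{k+1,\ldots,L\}$ and we repeat: apply \eqref{eq - gamma and V - outra} to $j_1$ to obtain $j_2\neq j_1$ with $V(\Sigma_{j_1})=V(\Sigma_{j_2})+a_{j_1j_2}$, and again $V(\Sigma_{j_1})+a_{lj_1}\le V(\Sigma_{j_2})+a_{lj_2}$.

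\medskip

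The remaining point is termination: this iteration produces a sequence $i=j_0,j_1,j_2,\ldots$ of indices in $\{k+1,\ldots,L\}$ (as long as it has not yet reached $\{1,\ldots,k\}$) with $V(\Sigma_{j_{t-1}})=V(\Sigma_{j_t})+a_{j_{t-1}j_t}$ and $j_t\neq j_{t-1}$. I would argue the sequence cannot stay forever in the finite set $\{k+1,\ldots,L\}$: if it did, some index would repeat, say $j_s=j_{s+p}$ with $p\ge 1$ minimal, and then telescoping the equalities around the cycle gives $0=\sum_{t=s}^{s+p-1}a_{j_t j_{t+1}}$; but each cost along a "genuine transition" is strictly negative — more precisely, concatenating the cycle and using subadditivity (item 3) yields $0=\sum a_{j_tj_{t+1}}\le a_{j_sj_s}<0$ by item 2, a contradiction. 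Hence after finitely many steps the sequence reaches some $j=j_T\in\{1,\ldots,k\}$, and chaining the inequalities $V(\Sigma_{j_{t-1}})+a_{lj_{t-1}}\le V(\Sigma_{j_t})+a_{lj_t}$ from $t=1$ to $T$ gives $V(\Sigma_i)+a_{li}\le V(\Sigma_j)+a_{lj}$, as desired.

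\medskip

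The main obstacle is the termination/no-cycle argument: one must be careful that the "costs" $a_{j_tj_{t+1}}$ appearing in the cycle are the ones along genuine transitions between distinct components, so that subadditivity closes the cycle down to $a_{j_sj_s}$ and item 2 of Lemma \ref{lem-aijk} applies. Everything else is a direct unwinding of the Max-Plus fixed-point equation \eqref{eq - gamma and V - outra} combined with the triangle-type inequality for the cost matrix. $\blacksquare$
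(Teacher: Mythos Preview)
Your proof is correct and follows essentially the same approach as the paper: iterate \eqref{eq - gamma and V - outra} using Lemma \ref{lem-aijk}(2) to force $j_t\neq j_{t-1}$, use Lemma \ref{lem-aijk}(3) to get the monotone chain $V(\Sigma_{j_{t-1}})+a_{lj_{t-1}}\le V(\Sigma_{j_t})+a_{lj_t}$, and rule out cycles by telescoping and collapsing via subadditivity to $a_{j_sj_s}<0$. The paper phrases the cycle contradiction as $V(\Sigma_{j_l})\le V(\Sigma_{j_l})+a_{j_lj_l}<V(\Sigma_{j_l})$ rather than $0=\sum a_{j_tj_{t+1}}\le a_{j_sj_s}<0$, but this is cosmetic.
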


	\begin{proof} 
			From equation \eqref{eq - gamma and V - outra} and item 2. of Lemma \ref{lem-aijk}, there exists $j_{1}\neq i$ such that 
		\begin{equation}\label{eq- 1}
			V(\S_{i})=V(\S_{j_{1}})+a_{ij_{1}}.\end{equation}
		Hence, item 3. of Lemma \ref{lem-aijk} yields 
		\begin{equation*}
			V(\S_{i})+a_{li} = V(\S_{j_{1}})+a_{ij_{1}}+a_{li} \le  V(\S_{j_{1}})+a_{lj_{1}}.
		\end{equation*}
		If $j_{1}\in\{1,...,k\}$ then the proposition is proved. 
		If not, then we apply the same reasoning with $j_{1}$ instead of $i$. This produces $j_{2}\neq j_{1}$ such that 
		\begin{equation}\label{eq- 2}
			V(\S_{j_{1}})=V(\S_{j_{2}})+a_{j_{1}j_{2}}.
		\end{equation}
		Equations \eqref{eq- 1} and \eqref{eq- 2} along with Lemma \ref{lem-aijk} yields 
		\begin{equation}\label{eq- 3}
			V(\S_{i})=V(\S_{j_{2}})+a_{ij_{1}}+a_{j_{1}j_{2}}\le V(\S_{j_{2}})+a_{ij_{2}}.
		\end{equation} 
		Again, if $j_{2}\in\{1,...,k\}$, then we are done. Otherwise we continue the reasoning.  Since there are finitely many components, either we eventually find some $j_{l}$ in $\{1,...,k\}$, or some $j_{r}$ will be equal to $j_l$ with $l<r$. This last case is impossible because we reach a contradiction. Indeed, by applying a similar equation to \eqref{eq- 2} recursively from $j_l$, we get  
		\[ V(\S_{j_l})=V(\S_{j_{l+1}})+a_{j_{l}j_{l+1}}=...=V(\S_{j_{r}})+a_{j_{l}j_{l+1}}+...+a_{j_{r-1}j_{r}} \]
		Using that $j_r=j_l$ and applying items 2. and 3. of Lemma \ref{lem-aijk}, we obtain
		\[  V(\S_{j_l}) =V(\S_{j_{l}})+a_{j_{l}j_{l+1}}+...+a_{j_{r-1}j_{l}}\leq V(\S_{j_{l}})+a_{j_{l}j_{l}} < V(\S_{j_{l}})\]
		which is a contradiction.

\end{proof} 
%
%
%
%

\subsection{End of the proof of Theorem \ref{theoA}}

\begin{proposition}\label{gamma equal entropies}
	The limit $\disp\gamma = \lim_{\beta\to\infty} \frac{1}{\beta}\log(P(\beta A) - h )$ exists.  
Furthermore it is the unique eigenvalue for the $k\times k$ matrix in the Max-Plus formalism whose entries are the $a_{ij}$'s. 
		
\end{proposition}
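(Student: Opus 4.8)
The plan is to reduce everything to the $k\times k$ sub-system indexed by the components of maximal entropy and then invoke the Max-Plus spectral theory (the analogue of Perron--Frobenius for the $(\max,+)$-semiring). First I would record that, by Lemma \ref{pressureselect}, $\beta\mapsto \frac1\beta\log(P(\beta A)-h)$ is non-increasing and bounded below (since every accumulation point lies in $]-\infty,0]$, the lower bound coming from \eqref{eq - gamma and V} as already noted), so it has a genuine limit $\gamma$ along the \emph{full} net $\beta\to\infty$, not merely along subsequences; likewise, by Lemma \ref{existsubact}, any accumulation point $V$ of $\frac1\beta\log H_{\beta A}$ is a calibrated subaction, hence (Lemma \ref{V(sigma)}) constant on each $\Sigma_i$. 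The content is then to pin down $\gamma$ via the vector $(V(\Sigma_1),\dots,V(\Sigma_k))$.

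The key step is to combine \eqref{eq - gamma and V}, \eqref{eq - gamma and V - outra} and Proposition \ref{prop-maxplusSiSJ}. Proposition \ref{prop-maxplusSiSJ} says exactly that in the maximum $\max_{j\in\{1,\dots,L\}}V(\Sigma_j)+a_{ij}$ appearing in \eqref{eq - gamma and V} for $i\in\{1,\dots,k\}$, the indices $j\in\{k+1,\dots,L\}$ are never strictly needed: they can always be replaced by some index in $\{1,\dots,k\}$ without decreasing the value. Hence for every $i\in\{1,\dots,k\}$,
\begin{equation*}
\gamma + V(\Sigma_i) = \max_{j\in\{1,\dots,k\}} \bigl(V(\Sigma_j)+a_{ij}\bigr).
\end{equation*}
Writing $\mathbf{w}=(V(\Sigma_1),\dots,V(\Sigma_k))\in\mathbb{R}^k$ and letting $M=(a_{ij})_{1\le i,j\le k}$ act on $\mathbb{R}^k$ by $(M\odot\mathbf{w})_i=\max_j(a_{ij}+w_j)$, this reads $\gamma\odot\mathbf{w}=M\odot\mathbf{w}$, i.e. $\mathbf{w}$ is a Max-Plus eigenvector of $M$ with eigenvalue $\gamma$. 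Since $\mathbf{w}$ has finite (real) entries, $\gamma$ is a finite Max-Plus eigenvalue of $M$.

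It remains to invoke uniqueness of the Max-Plus eigenvalue. By Lemma \ref{lem-aijk}, $M$ has finite entries, $a_{ii}<0$, and is sub-additive along compositions ($a_{li}+a_{ij}\le a_{lj}$); in particular, restricting to the indices $\{1,\dots,k\}$, the associated weighted digraph on $k$ vertices is strongly connected-enough for the standard theory to apply, and the unique eigenvalue equals the maximal cycle mean $\max_{\text{cycles }C}\frac1{|C|}\sum_{e\in C}a_e$, which by sub-additivity of the $a_{ij}$ is attained already on the loops and on $2$-cycles and is a well-defined real number. This is the step I would state carefully, citing the standard $(\max,+)$ Perron--Frobenius theorem (e.g. the cycle-mean formula); it guarantees that any two calibrated-subaction limits $V,V'$ yield the \emph{same} $\gamma$, so the limit along the full net exists and equals that eigenvalue. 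The main obstacle is purely expository: making sure the digraph irreducibility/normalization hypotheses of the Max-Plus eigenvalue theorem are legitimately met here (the components of maximal entropy need not a priori be ``communicating'' through $\Omega$), and handling that either by passing to a communicating class or by observing that sub-additivity plus $a_{ii}<0$ already forces uniqueness of the eigenvalue directly; I would do the latter, since it avoids any graph-theoretic case analysis. Finiteness of $\gamma$ was already secured above, and negativity is immediate from $a_{ii}<0$ together with the eigenvector equation evaluated where $w_i$ is maximal.
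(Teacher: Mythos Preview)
Your approach is essentially the paper's: use Proposition \ref{prop-maxplusSiSJ} to collapse \eqref{eq - gamma and V} to a $k\times k$ Max-Plus eigenvector equation $\gamma\otimes\mathbf{w}=M\otimes\mathbf{w}$, then invoke uniqueness of the Max-Plus eigenvalue (the paper simply cites \cite[Th.~3.23]{BCOQ}) to conclude that every accumulation point of $\frac1\beta\log(P(\beta A)-h)$ is forced to equal that eigenvalue, hence the limit exists.

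Two corrections are worth making. First, your opening claim that $\beta\mapsto\frac1\beta\log(P(\beta A)-h)$ is non-increasing is not justified: $P(\beta A)-h$ being non-increasing does not transfer to $\frac1\beta$ times its logarithm (for instance $g(\beta)=1/\beta$ is convex, positive, decreasing to $0$, yet $\frac1\beta\log g(\beta)=-\frac{\log\beta}{\beta}$ increases to $0$). So you cannot conclude existence of the limit from monotonicity alone. This does no damage to your argument, because the eigenvalue-uniqueness route you sketch at the end already delivers existence; just drop the monotonicity sentence and run the paper's scheme: pick any accumulation point $\gamma$ along a sequence $\beta_j$, pass to a further subsequence where $\frac1{\beta}\log H_{\beta A}\to V$, obtain the eigenvector equation, and conclude $\gamma$ is the unique eigenvalue of $M$.

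Second, your worry about whether the digraph is ``strongly connected enough'' is unnecessary. By Lemma \ref{lem-aijk} every entry $a_{ij}$ is a finite real number, so the weighted digraph on $\{1,\dots,k\}$ is complete and the hypotheses of the $(\max,+)$ spectral theorem are met outright; no case analysis or appeal to sub-additivity is needed for uniqueness.
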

		
	\begin{proof} Suppose that $\Omega = \S_1 \cup ...\cup \S_k \cup \S_{k+1}\cup...\cup \S_{L}$ where $h_{top}(\S_i) = h$ if $i\in\{1,...,k\}$ and $h_{top}(\S_i)<h$ if $i\in\{k+1,...,L\}$. 	{Let $\beta_j$ be a sequence such that $\lim_{\beta_j\to\infty} \frac{1}{\beta_j}\log(P(\beta_j A) - h )=\gamma$} and take a subsequence of this one such that $\frac{1}{\beta_{j_l}}\log(H_{\beta_{j_l}})$ converges for a function V. {From equation \eqref{eq - gamma and V}  $\gamma$ is a real number and we have  }
	\[ \gamma + V(\Sigma_i) = \max_{j\in\{1,...,L\}}V(\Sigma_j)+a_{ij},\,\, \forall \,i\in\{1,...,k\}.\]
	Applying Proposition \ref{prop-maxplusSiSJ} we get that such maximum is reached at some $j \in \{1,...,k\}$ and therefore 
	\[\gamma + V(\Sigma_i) = \max_{j\in\{1,...,k\}}V(\Sigma_j)+a_{ij},\,\,\forall\, i\in\{1,...,k\},\]
	which in Max-plus formalism can be rewritten as
\begin{equation}
	\label{eq-jajarere}
	\gamma \otimes \begin{bmatrix} V(\Sigma_1) \\ \vdots \\ V(\Sigma_k) \end{bmatrix}  = \begin{pmatrix} a_{11}&a_{12}&\dots&a_{1k}\\\vdots &\vdots &\vdots& \vdots \\ a_{k1}&a_{k2}&\dots&a_{kk}\end{pmatrix}\otimes \begin{bmatrix} V(\Sigma_1) \\ \vdots \\ V(\Sigma_k) \end{bmatrix} .
\end{equation}	
Observe that the matrix $M=(a_{ij})$ does not depend of the subsequence which defines $V$ and $\gamma$ and all its entries are real numbers.  Therefore, \cite[Th 3.23, p111]{BCOQ} yields that $M$ has a unique eigenvalue. Hence, the {function $\beta \to \disp\frac1\be\log(P(\be A)-h)$} admits a unique accumulation point, thus converges.
\end{proof}

%

\begin{remark} 
In the case $L=k$, \ie all the irreducible components have maximal entropy, setting $\vec{V}:=(V(\S_{1}),\ldots, V(\S_{k}))$, $\vec{V}$ is an eigenvector for the matrix $M$. Nevertheless, uniqueness does not necessarily holds. It is thus meaning full to ask for which eigenvectors can be selected. 

\noindent
The question still have meaning if $k<L$, and in that case the role of components with small entropy is even more unpredictable. 

\end{remark}

\section{Proof of Theorem \ref{theoB}}\label{sec: theoB}
From now on, we consider {$X=X_{2}$  the full 2-shift}. $A$ is a {non-positive potential} of the form 
$$A|_{[01]}=b,\,\,A|_{[10]}=d,\,\,A(0^{\infty})=A(1^{\infty})=0,\,\,  A|_{[0^n1]} =a_n,\,\,A|_{[1^n0]} =c_n.$$
It is Lipschitz continuous which yields that the series $\sum a_{n}$ and $\sum c_{n}$ converge. 
{Furthermore, we remind that all numbers $b$, $d$ $a_{n}$ and $c_{n}$ are negative, which  yields  that the Aubry set is $\{0^{\8},1^{\8}\}$}.

In that case Theorem \ref{theoA} holds, and we set 
$$\ga:=\lim_{\be\to+\8}\dfrac1\be\log P(\be A),$$
as periodic orbits are subshift of finite type with zero entropy. 

We consider a family $(B_{\be})$ of potentials satisfying $|B_\beta|_\infty< e^{\beta \delta},\,\delta<\gamma$. { In the study of selection of subaction we consider also the hypothesis $Lip(B_\beta) \leq \beta c$ for some constant $c$ and any $\beta$.} 

We also remind that $H_{\phi}$ stand for the eigenfunction for the potential $\phi$. In the case $\phi=\be A$ the normalization is 
\begin{equation}\label{eq-normeigen}
\boxed{H_{\be A}(0^{\8})=1.}
\end{equation}

The section runs as follows:  we first give some technical results. Then we prove the convergence for $\disp\frac1\be\log P(\be A+B_{\be})$ { and $\disp\frac1\be\log H_{\be A+B_{\be}}$}. Finally we prove the convergence for the measure. 

\subsection{Technical results}


\subsubsection{Accumulation points for $\frac{1}{\beta}log(H_{\beta A + B_{\beta}})$}

We remind the double-inequality \eqref{eq-pressbb}
$$P(\be A)-|B_{\be}|_{\8}\le P(\be A+B_{\be})\le P(\be A)+|B_{\be}|_{\8}. $$
Hence, our assumption $|B_\beta|_\infty< e^{\beta \delta},\,\delta<\gamma$ immediately yields 

\begin{equation}\label{eq-cvpresspertu}
\lim_{\beta \to\infty} \frac{1}{\beta}\log(P(\beta A + B_\beta)) = \gamma.
\end{equation}

Next Lemma is a re-writing of the computation we did to get Equalities \eqref{ja1} and \eqref{ja2}.

\begin{lemma}\label{lemma2} { Suppose $Lip(B_\beta) \leq \beta c$ for some constant $c$ and any $\beta$ and that $|B_\beta|_\infty< e^{\beta \delta},\,\delta<\gamma$. }Any accumulation point $\wt V$ for $\frac{1}{\beta}\log(H_{\beta A+B_{\beta}})$  is a calibrated subaction of $A$ satisfying
	\[\gamma = A(10^{\infty}) + \tilde{V}(10^{\infty}) - \tilde{V}(0^{\infty})\]
	and
	\[\gamma = A(01^{\infty}) + \tilde{V}(01^{\infty}) - \tilde{V}(1^{\infty}).\]
\end{lemma}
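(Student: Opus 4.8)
The strategy is to combine the general results already proved in Section~\ref{sec: preliminary} with the structure of the Aubry set $\Omega=\{0^\infty,1^\infty\}$ forced by \eqref{W}. First, Lemma~\ref{existsubact} applies verbatim since the hypotheses $Lip(B_\beta)\le\beta c$ and $|B_\beta|_\infty/\beta\to 0$ (the latter follows from $|B_\beta|_\infty<e^{\beta\delta}$) are exactly the ones required there; hence accumulation points $\wt V$ of $\frac1\beta\log H_{\beta A+B_\beta}$ exist in the $|\cdot|_\infty$-norm and each is a calibrated subaction for $A$. So the only real content is to pin down the two displayed identities relating $\gamma$ to the values of $\wt V$ at $0^\infty$, $10^\infty$, $1^\infty$, $01^\infty$.

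For this I would replay the computation that produced \eqref{ja1}--\eqref{ja2}, now with the perturbed potential $\beta A+B_\beta$ and with $\Sigma_i=\{0^\infty\}$. Evaluating the eigenfunction equation $e^{P(\beta A+B_\beta)}H_{\beta A+B_\beta}(x)=L_{\beta A+B_\beta}(H_{\beta A+B_\beta})(x)$ at $x=0^\infty$, and noting that the two preimages of $0^\infty$ are $0^\infty$ itself and $10^\infty$, one gets
\begin{equation}\label{eq-plan1}
e^{P(\beta A+B_\beta)}H_{\beta A+B_\beta}(0^\infty)=e^{\beta A(0^\infty)+B_\beta(0^\infty)}H_{\beta A+B_\beta}(0^\infty)+e^{\beta A(10^\infty)+B_\beta(10^\infty)}H_{\beta A+B_\beta}(10^\infty).
\end{equation}
Using $A(0^\infty)=0$ this becomes $(e^{P(\beta A+B_\beta)}-e^{B_\beta(0^\infty)})H_{\beta A+B_\beta}(0^\infty)=e^{\beta A(10^\infty)+B_\beta(10^\infty)}H_{\beta A+B_\beta}(10^\infty)$. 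Now take $\frac1\beta\log(\cdot)$ and let $\beta=\beta_j\to\infty$ along the chosen subsequence. On the right the term $\frac1\beta B_\beta(10^\infty)\to 0$ and $\frac1\beta\log H_{\beta A+B_\beta}(10^\infty)\to\wt V(10^\infty)$, $\frac1\beta\log H_{\beta A+B_\beta}(0^\infty)\to\wt V(0^\infty)$. On the left, \eqref{eq-cvpresspertu} gives $\frac1\beta\log P(\beta A+B_\beta)\to\gamma$, i.e. $P(\beta A+B_\beta)=e^{\gamma\beta(1+o(1))}\to 0$ (recall $\gamma<0$ by Theorem~\ref{theoA} in this setting, which I would invoke), while $e^{B_\beta(0^\infty)}\to 1$; a short argument via Lemma~\ref{lem-summaxplus} (or directly) shows $\frac1\beta\log|e^{P(\beta A+B_\beta)}-e^{B_\beta(0^\infty)}|$ need not simply be $\gamma$, so the cleaner route is to rearrange \eqref{eq-plan1} keeping the eigenfunction equation at $0^\infty$ against the eigenfunction equation that isolates the ``escape'' term — exactly as in \eqref{ja1}, where the coefficient $(e^{P(\beta A)-h}-1)$ appears. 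Writing $P(\beta A+B_\beta)=\log\lambda_{\beta A+B_\beta}$ and subtracting $e^{B_\beta(0^\infty)}H_{\beta A+B_\beta}(0^\infty)$ from both sides of \eqref{eq-plan1}, the left side is $(e^{P(\beta A+B_\beta)}-e^{B_\beta(0^\infty)})H_{\beta A+B_\beta}(0^\infty)$; since $e^{B_\beta(0^\infty)}=1+o(1)$ and $e^{P(\beta A+B_\beta)}=1+o(1)$ as well, the dominant behaviour of the left side in $\frac1\beta\log$ scale is governed by $\frac1\beta\log|e^{P(\beta A+B_\beta)}-e^{B_\beta(0^\infty)}|$; using $|e^u-e^v|\le e^{\max(u,v)}|u-v|$ and $e^u-e^v\ge e^{\min(u,v)}(u-v)$ for $u>v$, together with $P(\beta A+B_\beta)-B_\beta(0^\infty)=P(\beta A+B_\beta)(1+o(1))$ because $B_\beta(0^\infty)=o(P(\beta A+B_\beta))$ (this uses $|B_\beta|_\infty<e^{\beta\delta}$ with $\delta<\gamma$, so $B_\beta$ is exponentially smaller than $P(\beta A+B_\beta)\sim e^{\gamma\beta}$), one gets $\frac1\beta\log|e^{P(\beta A+B_\beta)}-e^{B_\beta(0^\infty)}|\to\gamma$. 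Taking $\frac1\beta\log$ of the rearranged identity then yields $\gamma+\wt V(0^\infty)=A(10^\infty)+\wt V(10^\infty)$, which is the first claimed identity.

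The second identity is obtained symmetrically, evaluating the eigenfunction equation at $x=1^\infty$, whose preimages are $1^\infty$ and $01^\infty$, and using $A(1^\infty)=0$; the same estimate on $e^{P(\beta A+B_\beta)}-e^{B_\beta(1^\infty)}$ applies. \textbf{The main obstacle} is precisely the step just described: controlling $\frac1\beta\log|e^{P(\beta A+B_\beta)}-e^{B_\beta(\cdot)}|$ and verifying that the perturbation $B_\beta$, although it may not satisfy \eqref{W}, is negligible at the relevant exponential scale because $\delta<\gamma$ — this is exactly where the hypothesis $|B_\beta|_\infty<e^{\beta\delta}$ is used in an essential way, and it is the comparison $\delta<\gamma$ (so that $B_\beta(0^\infty)=o(e^{\gamma\beta})$) that makes the argument go through. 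Everything else — existence of accumulation points, that they are calibrated subactions, and that $\gamma<0$ — is already available from Lemma~\ref{existsubact}, Lemma~\ref{mane}, and Theorem~\ref{theoA}.
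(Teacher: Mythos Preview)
Your proposal is correct and follows essentially the same route as the paper: invoke Lemma~\ref{existsubact} for the calibrated-subaction part, evaluate the eigenfunction identity at $0^\infty$ (resp.\ $1^\infty$), isolate the ``escape'' preimage $10^\infty$ (resp.\ $01^\infty$), and pass to the limit in $\frac1\beta\log(\cdot)$. The paper states in one line that $\frac1\beta\log\big(e^{P(\beta A+B_\beta)}-e^{B_\beta(0^\infty)}\big)\to\gamma$ from \eqref{eq-cvpresspertu} together with $|B_\beta|_\infty<e^{\beta\delta}$; your mean-value bounds and the observation $B_\beta(0^\infty)=o(P(\beta A+B_\beta))$ are exactly the justification of that line, so you have in fact supplied more detail than the original at what you correctly identified as the only nontrivial step.
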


\begin{proof} 
{ From Lemma \ref{existsubact} there exist convergent sub-sequences and any limit is a calibrated subaction for $A$. For any $x$ we have 
$$e^{P(\be A+B_{\be})}H_{\be A+B_{\be}}(x)=\sum_{y,\ \s(y)=x}e^{\be A(y)+B_{\be}(y)}H_{\be A+B_{\be}}(y).$$
In the special case $x=0^{\8}$, we get 
\[   (e^{P(\beta  A +B_\beta)} - e^{B_{\beta}(0^{\infty})})H_{\beta A+B_\beta}(0^{\infty}) =  e^{\beta A(10^{\infty})+B_\beta(10^\infty)}H_{\beta A +B_\beta}(10^{\infty}).\]
From \eqref{eq-cvpresspertu} and the hypothesis $|B_\beta|_\infty <e^{\beta \delta}$  we obtain 
$$\lim_{\beta\to+\8}\frac{1}{\beta}\log(e^{P(\beta  A +B_\beta)} - e^{B_{\beta}(0^{\infty})}) = \gamma.$$
Therefore
$$\ga+\wt V(0^{\8})=A(10^{\8})+\wt V(10^{\8}).$$
The proof of $\gamma = A(01^{\infty}) + \tilde{V}(01^{\infty}) - \tilde{V}(1^{\infty})$ is similar. }
\end{proof}  

\subsubsection{A property for one $2\times 2$ Max-Plus Matrix}

\begin{proposition}
\label{prop-autoespaco}
Let $a,b,c,d$ be real numbers. Let $M$ be the matrix 
$$M:=\left(\begin{array}{cc}a+b+d & c+d \\a+b & b+c+d\end{array}\right).$$
Then its unique eigenvalue (for max-plus laws) is given by 
\[\lambda = \max\left\{a+b+d, b+c+d,\frac{a+b+c+d}{2}\right\}.\]
Furthermore, its eigen-space has dimension 1. 
\end{proposition}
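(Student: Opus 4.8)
The plan is to compute the unique max-plus eigenvalue of $M$ directly from the structure of its ``cycles'', and then to verify the one-dimensionality of the eigenspace by a short algebraic argument. Recall that, in the max-plus semiring, a matrix associated to an irreducible graph has a unique eigenvalue equal to the maximal mean weight of a cycle in its weighted digraph; here the digraph has two vertices $1,2$ with weights $M_{11}=a+b+d$, $M_{22}=b+c+d$ (loops) and $M_{12}=c+d$, $M_{21}=a+b$ (the two arcs $1\to 2$ and $2\to 1$). I would first note that $M$ is irreducible (all four entries are real, hence the digraph is strongly connected), so \cite[Th 3.23, p111]{BCOQ} applies and $M$ has a unique eigenvalue $\lambda$, given by the max over elementary cycles of the arithmetic mean of the weights.

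The elementary cycles are the two loops, of mean weights $a+b+d$ and $b+c+d$ respectively, and the single $2$-cycle $1\to2\to1$ of total weight $M_{12}+M_{21}=(c+d)+(a+b)=a+b+c+d$, hence mean weight $\frac{a+b+c+d}{2}$. Therefore
\begin{equation*}
\lambda=\max\left\{a+b+d,\ b+c+d,\ \frac{a+b+c+d}{2}\right\},
\end{equation*}
which is the claimed formula. For readers who prefer not to invoke the cycle-mean theorem as a black box, I would alternatively exhibit an eigenvector explicitly and check that no other eigenvalue can occur: if $(v_1,v_2)^{T}$ is an eigenvector then $\lambda+v_1=\max\{(a+b+d)+v_1,\ (c+d)+v_2\}$ and $\lambda+v_2=\max\{(a+b)+v_1,\ (b+c+d)+v_2\}$, and a short case analysis on which term realizes each maximum pins down $\lambda$ to the three candidate values and forces $v_2-v_1$ to take one of at most two values, each leading to the same $\lambda$.

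For the dimension statement, I would argue that the eigenspace of an irreducible max-plus matrix is generated by the columns of $M^{+}_{\lambda}:=\bigoplus_{n\ge 1}(\lambda^{-1}M)^{\otimes n}$ indexed by the ``critical'' vertices (those lying on a cycle attaining the maximal mean), and that all critical vertices in the same strongly connected component of the critical graph give proportional columns; since our digraph has only two vertices, the critical graph is connected, so its columns span a one-dimensional cone. Concretely, normalizing $v_1=0$, the relation $\lambda+v_2=\max\{(a+b)+0,\ (b+c+d)+v_2\}$ together with $\lambda\ge b+c+d$ (immediate from the formula) forces $v_2=a+b-\lambda$ whenever $\lambda>b+c+d$, and a direct check handles the boundary case $\lambda=b+c+d$; either way $v_2$ is uniquely determined by $v_1$, so the eigenspace is a line.

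The main obstacle is purely bookkeeping: the max-plus eigenvalue/eigenvector theory allows genuinely higher-dimensional eigenspaces when the critical graph is disconnected, so the proof must use that $M$ is $2\times2$ and irreducible to rule this out, and the boundary cases where two or three of the candidate expressions for $\lambda$ coincide must be checked not to produce an extra independent eigenvector. I expect the cleanest writeup to avoid heavy citations and instead do the explicit two-equation case analysis above, since it simultaneously yields the formula for $\lambda$ and the uniqueness of the eigendirection.
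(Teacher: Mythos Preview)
Your computation of the eigenvalue via the maximal cycle mean is correct and matches the paper, which simply cites \cite[Th.~3.23]{BCOQ} and states the formula. For the one-dimensionality of the eigenspace, the paper carries out precisely the ``explicit two-equation case analysis'' you recommend in your last paragraph: it splits into the three cases $\lambda=a+b+d$, $\lambda=b+c+d$, $\lambda=\tfrac{a+b+c+d}{2}$, and in each case manipulates the two scalar equations to pin down $y-x$ (obtaining $y-x=-d$, $y-x=b$, and $y-x=\tfrac{a+b-c-d}{2}$ respectively). So your concrete route and the paper's are essentially the same, only organized slightly differently.

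Your critical-graph shortcut, however, has a genuine gap. The assertion ``since our digraph has only two vertices, the critical graph is connected'' is false in general: if both loops are critical while the $2$-cycle is not, the critical graph consists of two disjoint loops, and then the eigenspace is two-dimensional. Concretely, this happens exactly when $a+b+d=b+c+d>\tfrac{a+b+c+d}{2}$, i.e.\ when $a=c$ and $b+d>0$; for instance $a=c=0$, $b=1$, $d=0$ gives $\lambda=1$ and every $(0,v_2)$ with $0\le v_2\le 1$ is an eigenvector. Thus the proposition, read for arbitrary real $a,b,c,d$, is not true without a sign condition. The paper's case analysis tacitly uses $b+d<0$: in Case~1 the step ``$c+d+y\le a+b+d+x$, hence $c+d+y<a+x$'' needs $b+d<0$, and Case~2 is symmetric. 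In the paper's application $b$ and $d$ are strictly negative, so this is harmless there, but your write-up should make the hypothesis explicit and verify that under it the degenerate configuration above cannot occur (when $a=c$ and $b+d<0$ one has $\tfrac{a+b+c+d}{2}>a+b+d$, so one is forced into Case~3). Your boundary-case ``direct check'' for $\lambda=b+c+d$ likewise requires bringing in the first equation and this sign condition; eq.~(2) alone only gives $v_2\ge a+b-\lambda$ in that case.
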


\begin{proof}
Again,  \cite[Th 3.23, p111]{BCOQ} yields that $M$ has a unique eigen-value, say $\l$,  which is 
equal to $$\max\left\{a+b+d, b+c+d,\frac{a+b+c+d}{2}\right\}.$$
It thus remains to study the dimension of the eigenspace. 

	Equality
	\[\begin{pmatrix}a+b+d & c+d \\a+b & b+c+d\end{pmatrix} \otimes \begin{bmatrix} x\\y\end{bmatrix} = \lambda \otimes \begin{bmatrix}x\\y\end{bmatrix}\]
	can be rewritten as 
	
	\begin{subnumcases} {}
	\max\{a+b+d+x, \,\,c+d+y\} = \lambda+x \label{eq1toto}\\ 
	\max\{a+b+x,\,\,b+c+d+y\} = \lambda + y \label{eq2toto}
	\end{subnumcases}

To	prove that the eigen-space has dimension 1 means to prove that $y-x$ is constant. 
We consider three  cases. 

\medskip
$\bullet$ Case 1: If $\l=a+b+d$. Equation \eqref{eq1toto} yields $c+d+y\le a+b+d+x$. Hence 
$c+d+y< a+x$ and then $b+c+d+y < a+b+x$. 

Using this in \eqref{eq2toto} we get $ a+b+x=\lambda+y$ and then, as $\l=a+b+d$, we get $\boxed{y=x-d}$. 

\medskip
$\bullet$ Case 2: If $\l=b+c+d$.  The computation is the same than in the previous case, up to doing the exchanges
$$a\leftrightarrow c,\ b\leftrightarrow d,\ x\leftrightarrow y.$$
This yields $\boxed{x=y-b}$

\medskip
 $\bullet$ Case 3: If $\l=\disp\frac{a+b+c+d}{2}$.
 Equation \eqref{eq1toto}
 yields $c+d+y\le \dfrac{a+b+c+d}2+x$ which means 
$$y\le \dfrac{a+b-c-d}2+x.$$
Conversely, Equation \eqref{eq2toto} yields $a+b+x\le \dfrac{a+b+c+d}2+y$ which means 
$$y\ge \dfrac{a+b-c-d}2+x.$$
This finally yields $\boxed{y=x+\dfrac{a+b-c-d}2}$. 
%
%
%

\end{proof}

\subsection{Proof of the convergence for $\disp\frac{1}{\be}\cdot log (H_{\be A+B_{\be}})$}
 Here we prove that perturbation does not affect the selection of the calibrated subaction. 
Note that Theorem \ref{theoA} holds in our case and we shall re-write Equation \eqref{eq-jajarere}. For that purpose we have to compute the Ma\~{n}\'{e} potential. 
For simplicity { we set $S_{i}(\cdot):=S(i^{\8},\cdot)$} with $i=0,1$.

\bigskip
We let the reader check that the functions $S_{i}$ are constant on the 2-cylinders $[01]$ and $[10]$ with values:
$$S_{0}(10)=b+\sum_{n\ge 2}a_{n},\ S_{0}(01)=\sum_{n\ge 2}a_{n},$$
$$S_{{1}}(10)=\sum_{n\ge 2}c_{n},\ S_{1}(01)=d+\sum_{n\ge 2}c_{n}.$$

Equation \eqref{eq-jajarere} yields that any accumulation point $V$ for $\dfrac1\be\log H_{\be A}$ satisfies the equation
    $$\gamma\otimes \begin{bmatrix}V(0^\8) \\V(1^\8)\end{bmatrix}=
  \left(\begin{array}{cc}A(10)+S_{0}(10) & A(10)+S_{1}(10) \\A(01)+S_{0}(01) & A(01)+S_{1}(01)\end{array}\right)\otimes \begin{bmatrix}V(0^\8) \\V(1^\8)\end{bmatrix},$$
 which can be rewritten as
 \begin{equation}
\label{eq2-maxpluswalters}
\gamma\otimes \begin{bmatrix}V(0^\8) \\V(1^\8)\end{bmatrix}=
  \left(\begin{array}{cc}d+b+\sum_{n\ge 2}a_n & d+\sum_{n\ge 2}c_n \\b+\sum_{n\ge 2}a_n & b+d+\sum_{n\ge 2}c_n\end{array}\right)\otimes \begin{bmatrix}V(0^\8) \\V(1^\8)\end{bmatrix}.
\end{equation}


Equation \eqref{eq2-maxpluswalters} can be rewritten as 
$$\gamma\otimes \begin{bmatrix} V(0^\8) \\V(1^\8)\end{bmatrix}=M\otimes \begin{bmatrix} V(0^\8) \\V(1^\8)\end{bmatrix},$$
where $M$ satisfies  Proposition \ref{prop-autoespaco}, with $a:=\sum_{n\geq 2} a_{n}$ and $c:=\sum_{n\geq 2} c_{n}$. This yields 
\begin{equation}\label{gamma Walters}
	\gamma=\max\left\{\sum_{n\geq 2} a_{n}+b+d, \,\,\sum_{n\geq 2} c_{n}+b+d,\,\,\frac{\sum_{n\geq 2} a_{n}+\sum_{n\geq 2} c_{n}+b+d}{2}\right\}.
\end{equation} 	

 Our condition $H_{\be A}(0^{\8})=1$ yields $V(0^{\8})=0$ and, as the eigenspace has dimension 1 (see Proposition \ref{prop-autoespaco}), this fixes the value for $V(1^{\8})$. As $\Omega = \{0^\infty,1^\infty\}$, applying Lemma \ref{mane}, the calibrated subaction $V$ is determined for any point $x\in \{0,1\}^{\mathbb{N}}$. 

\medskip
 Now we consider the potential $\beta A +B_\beta$. Lemma \ref{lemma2} shows that any accumulation point $\wt V$ for $\dfrac1\be\log(H_{\be A+B_{\be}})$ is a calibrated subaction for $A$ which satisfies

$$\gamma + \tilde{V}(0^{\infty})= A(10^{\infty}) + \tilde{V}(10^{\infty})$$
and 
$$\gamma + \tilde{V}(1^{\infty})= A(01^{\infty}) + \tilde{V}(01^{\infty}).$$

By Lemma \ref{mane} this can be rewritten as 
$$\gamma + \tilde{V}(0^{\infty})= A(10^{\infty}) + \max_{i\in\{1,2\}} S_{i^\infty}(10)+\tilde{V}(i^{\infty})$$
and
$$\gamma + \tilde{V}(1^{\infty})= A(01^{\infty}) + \max_{i\in\{1,2\}} S_{i^\infty}(01)+\tilde{V}(i^{\infty}).$$
Then, as in \eqref{eq2-maxpluswalters}, we have
$$\gamma\otimes \left(\begin{array}{c}\wt V(0^\8) \\\wt V(1^\8)\end{array}\right)=  \left(\begin{array}{cc}d+b+\sum_{n\ge 2}a_n & d+\sum_{n\ge 2}c_n \\b+\sum_{n\ge 2}a_n & b+d+\sum_{n\ge 2}c_n\end{array}\right)\otimes \left(\begin{array}{c}\wt V(0^\8) \\\wt V(1^\8)\end{array}\right).$$
Using the condition $H_{\be A+B_{\be}}(0^\infty)=1$ we have $\wt V(0^{\8})=0=V(0^{\8})$, therefore Prop. \ref{prop-autoespaco} shows that $\wt V(1^{\infty})=V(1^{\infty})$. Applying Lemma \ref{mane} we conclude that $\tilde{V} = V$.

\subsection{Proof of the convergence for $\mu_{\be A+B_{\be}}$}

\subsubsection{The key condition to get convergence}

Now we present some auxiliary results for the study of selection of measure.
For any cylinder $[\om] \subset X$ let us denote by $\BBone_{[\om]}:X\to\{0,1\}$ the function satisfying
\[\BBone_{[\om]}(x) =\left\{\begin{array}{cc} 1& \text{if } x\in [\om] \\ 0 & \text{if } x\notin [\om]\end{array}\right. .\] 

{ \begin{lemma}\label{cilinder} { Let $A:X\to\mathbb{R}$ be any Lipschitz function such that $\mu_{\be A}$ converges to a probability $\mu_{\8}$.} 	
Let {$[\om]$} be a fixed cylinder and assume that for any $\wt\delta<\gamma$ the one parameter family of functions  $C_\beta:=e^{\be\wt\de}\BBone_{[\om]}$  satisfies  $$\limsup_{\beta \to +\infty}\mu_{\beta A + C_\beta}([\om]) \leq \mu_\infty([\om]) \text{ and } \liminf_{\beta \to +\infty}\mu_{\beta A - C_\beta}([\om]) \geq \mu_\infty([\om]).$$ 
	
	Then for any $\wt\delta'<\gamma$ and any Lipschitz continuous  functions $B_\beta$ satisfying $|B_\beta|_\infty < e^{\beta \wt\delta'}$,  $\lim_{\beta \to +\infty}\mu_{\beta A + B_\beta}([\om])=\mu_{\infty} ([\om])$ holds.
\end{lemma}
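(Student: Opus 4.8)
The plan is to reduce the statement about arbitrary Lipschitz perturbations $B_\beta$ with $|B_\beta|_\infty < e^{\beta\wt\delta'}$ to the hypotheses, which only concern the very specific ``indicator-type'' perturbations $\pm e^{\beta\wt\delta}\BBone_{[\om]}$. The key observation is a monotonicity principle for the map $B \mapsto \mu_{\beta A + B}([\om])$: if $B \le B'$ pointwise and $B' - B$ is supported (as far as being nonzero) on $[\om]$, i.e. $B' = B$ off $[\om]$, then one expects $\mu_{\beta A + B}([\om]) \le \mu_{\beta A + B'}([\om])$, because adding weight on $[\om]$ can only increase the equilibrium-state mass of $[\om]$. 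A clean way to see this is via the derivative formula already used in the proof of Lemma \ref{pressureselect}: for a one-parameter family $B + t(B'-B)$ one has $\frac{d}{dt} P(\beta A + B + t(B'-B)) = \int (B'-B)\, d\mu_{\beta A + B + t(B'-B)}$, and differentiating a second time shows convexity, from which the comparison of the masses $\mu([\om]) = \frac{1}{\beta}\frac{\partial}{\partial s} P(\beta A + B + s\BBone_{[\om]})\big|_{s=0}$ follows by monotonicity of the slope. Alternatively one argues directly: $\mu_{\beta A + B'}$ is the equilibrium state, so $h(\mu_{\beta A+B'}) + \int(\beta A + B')\,d\mu_{\beta A+B'} \ge h(\mu_{\beta A + B}) + \int(\beta A + B')\,d\mu_{\beta A+B}$, and symmetrically with the roles swapped; adding the two inequalities and using $B' - B = (B'-B)\BBone_{[\om]} \ge 0$ yields $\int (B'-B)(\BBone_{[\om]})\, d(\mu_{\beta A+B'} - \mu_{\beta A+B}) \ge 0$, and since $B'-B$ is a (nonnegative) multiple — possibly non-constant — of $\BBone_{[\om]}$ this forces the desired sign on $\mu_{\beta A+B'}([\om]) - \mu_{\beta A+B}([\om])$ when $B'-B$ is in fact constant on $[\om]$; so one should take the comparison potentials to be \emph{constant} multiples of $\BBone_{[\om]}$, which is exactly the shape of the $C_\beta$ in the hypothesis.

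Granting this comparison, the argument runs as follows. Fix $\wt\delta'<\gamma$ and pick $\wt\delta$ with $\wt\delta' < \wt\delta < \gamma$. For $\beta$ large enough, $|B_\beta|_\infty < e^{\beta\wt\delta'} < e^{\beta\wt\delta}$, so pointwise
\[
\beta A - e^{\beta\wt\delta}\BBone_{[\om]} \;\le\; \beta A + B_\beta \;\le\; \beta A + e^{\beta\wt\delta}\BBone_{[\om]}.
\]
Moreover $B_\beta$ and $\pm e^{\beta\wt\delta}\BBone_{[\om]}$ need not agree off $[\om]$; to use the monotonicity we instead compare $\beta A + B_\beta$ with $\beta A + B_\beta \pm e^{\beta\wt\delta}\BBone_{[\om]}$, whose difference \emph{is} a constant multiple of $\BBone_{[\om]}$. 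Then monotonicity gives
\[
\mu_{\beta A + B_\beta - e^{\beta\wt\delta}\BBone_{[\om]}}([\om]) \;\le\; \mu_{\beta A + B_\beta}([\om]) \;\le\; \mu_{\beta A + B_\beta + e^{\beta\wt\delta}\BBone_{[\om]}}([\om]).
\]
Now $B_\beta \pm e^{\beta\wt\delta}\BBone_{[\om]}$ is itself a perturbation with $\ell^\infty$ norm bounded by $2e^{\beta\wt\delta}$, and $B_\beta$ has norm $o(e^{\beta\wt\delta})$, so the perturbed families $\beta A + (B_\beta \pm e^{\beta\wt\delta}\BBone_{[\om]})$ and $\beta A \pm e^{\beta\wt\delta}\BBone_{[\om]}$ differ by $B_\beta$, whose norm divided by $e^{\beta\wt\delta}$ tends to $0$; hence their pressures and equilibrium states have the same zero-temperature behaviour. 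Applying the two displayed hypotheses (with $\wt\delta$ in place of the hypothesis's parameter) therefore yields
\[
\limsup_{\beta\to\infty}\mu_{\beta A + B_\beta}([\om]) \le \mu_\infty([\om]), \qquad
\liminf_{\beta\to\infty}\mu_{\beta A + B_\beta}([\om]) \ge \mu_\infty([\om]),
\]
which is the claim.

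The main obstacle I anticipate is making the comparison/monotonicity step fully rigorous and checking that adding $B_\beta$ to the ``test'' perturbation $\pm e^{\beta\wt\delta}\BBone_{[\om]}$ genuinely does not disturb the limit — i.e., that Lemma \ref{pressureselect}-type arguments apply with a moving reference potential rather than a fixed one. Concretely, one has to verify that if $\mu_{\beta A + e^{\beta\wt\delta}\BBone_{[\om]}}$ has $\limsup$ of $[\om]$-mass $\le \mu_\infty([\om])$, then so does $\mu_{\beta A + e^{\beta\wt\delta}\BBone_{[\om]} + B_\beta}$; this should follow because the extra term $B_\beta$ is exponentially negligible compared with $e^{\beta\wt\delta}$ and with the gap $\gamma$, so it cannot shift which pieces of $\Omega$ are selected — but spelling this out may require re-running the pressure estimate \eqref{eq-pressbb} and a Laplace/Lemma \ref{lmms} argument one more time. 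A secondary point requiring care is that $\BBone_{[\om]}$ is not continuous, so the derivative-of-pressure formula must be justified (e.g. by approximating $\BBone_{[\om]}$ by Lipschitz functions from above and below, using that $\mu_\infty(\partial[\om]) = 0$ since cylinders are clopen), or the variational inequality argument should be used instead, which avoids differentiability altogether.
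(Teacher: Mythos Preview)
Your variational-inequality observation --- that combining the two free-energy inequalities for potentials $\beta A+B$ and $\beta A+B'$ gives
\[
\int (B'-B)\,d\mu_{\beta A+B'} \;\ge\; \int (B'-B)\,d\mu_{\beta A+B}
\]
--- is precisely the engine of the paper's proof. The gap is in how you deploy it. You use it only in the special case where $B'-B$ is a constant multiple of $\BBone_{[\om]}$, extract the clean monotonicity $\mu_{\beta A+B}([\om])\le\mu_{\beta A+B'}([\om])$, and then sandwich $\mu_{\beta A+B_\beta}([\om])$ between $\mu_{\beta A+B_\beta\pm C_\beta}([\om])$. As you yourself flag, this leaves you needing to pass from the hypothesis on $\mu_{\beta A\pm C_\beta}$ to the same statement for $\mu_{\beta A+B_\beta\pm C_\beta}$, and your justification (``$B_\beta$ is exponentially negligible compared with $e^{\beta\wt\delta}$, so the zero-temperature behaviour is unchanged'') is exactly the assertion of the lemma with base family $\beta A\pm C_\beta$ in place of $\beta A$. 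The hypothesis gives you nothing about that shifted base family, so the step is circular.

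The repair is to skip the monotonicity detour and apply the variational inequality \emph{directly} with $B=B_\beta$ and $B'=C_\beta=e^{\beta\wt\delta}\BBone_{[\om]}$ for some $\wt\delta\in(\wt\delta',\gamma)$. Here $C_\beta-B_\beta$ is not a multiple of $\BBone_{[\om]}$, so you do not get exact monotonicity --- but you do not need it. Dividing through by $e^{\beta\wt\delta}$, the $\int C_\beta$ terms become $\mu_{\cdot}([\om])$ exactly, while each $\int B_\beta$ term is bounded in absolute value by $e^{-\beta\wt\delta}|B_\beta|_\infty\le e^{\beta(\wt\delta'-\wt\delta)}$. One obtains
\[
\mu_{\beta A+C_\beta}([\om]) \;\ge\; \mu_{\beta A+B_\beta}([\om]) - 2e^{\beta(\wt\delta'-\wt\delta)},
\]
and since $\wt\delta'<\wt\delta$ the error term vanishes, giving $\limsup_\beta\mu_{\beta A+B_\beta}([\om])\le\mu_\infty([\om])$ straight from the hypothesis. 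The $\liminf$ bound is symmetric, comparing $B_\beta$ with $-C_\beta$. This is the paper's argument; your proposal contains all its ingredients but routes them through an unnecessary intermediate that reintroduces the problem.

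(Your secondary worry is harmless: cylinders in a subshift are clopen, so $\BBone_{[\om]}$ is Lipschitz and no approximation is needed.)
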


\begin{proof}
	Let us assume  $|B_\beta|_\infty <e^{\beta \wt\delta'}$ for  some $\wt\delta'<\gamma$ holds. Let $\wt\delta$ be a real number in $(\wt\delta',\gamma)$, let us fixe some { cylinder} $[\om]$. We set $C_\beta:=e^{\be\wt\de}\BBone_{[\om]}$. By definition of being equilibrium state, 
the next inequalities hold:
$$\int \beta A + C_{\beta} \,d\mu_{\beta A + C_\beta} + h({\mu_{\beta A + C_\beta}}) \geq 
	\int \beta A + C_{\beta} \,d\mu_{\beta A + B_\beta} + h({\mu_{\beta A + B_\beta}})$$
	and
	$$\int \beta A + B_{\beta} \,d\mu_{\beta A + C_\beta} + h({\mu_{\beta A + C_\beta}}) \leq 
	\int \beta A + B_{\beta} \,d\mu_{\beta A + B_\beta} + h({\mu_{\beta A + B_\beta}}).$$
	This yields
	\begin{equation}\label{eq1}
\int C_\beta - B_\beta \, d\mu_{\beta A + C_\beta} \geq \int C_\beta - B_\beta \, d\mu_{\beta A + B_\beta}.
\end{equation}
Similarly, inequalities 
$$\int \beta A + B_{\beta} \,d\mu_{\beta A + B_\beta} + h({\mu_{\beta A + B_\beta}}) \geq \int \beta A + B_{\beta} \,d\mu_{\beta A - C_\beta} + h({\mu_{\beta A - C_\beta}})
$$
and
$$ \int \beta A - C_{\beta} \,d\mu_{\beta A + B_\beta} + h({\mu_{\beta A + B_\beta}})\leq \int \beta A - C_{\beta} \,d\mu_{\beta A - C_\beta} + h({\mu_{\beta A - C_\beta}})$$
yield
\begin{equation}\label{eq2}
\int B_\beta +C_\beta \, d\mu_{\beta A + B_\beta} \geq \int B_\beta +C_\beta \, d\mu_{\beta A - C_\beta}.
\end{equation} 
Then we have 
\begin{eqnarray*}
\mu_{\beta A + C_\beta}([\om])&=&  \int \BBone_{[\om]} \, d\mu_{\beta A + C_\beta}\\
&=& e^{-\beta \wt\delta}\cdot\int C_{\be} \, d\mu_{\beta A + C_\beta}\\
&= &e^{-\beta \wt\delta}\cdot \int  C_{\be}-B_{\be}+B_{\be} \, d\mu_{\beta A + C_\beta}\\
\text{ by\eqref{eq1}} &\ge &e^{-\beta \wt\delta}\cdot\int C_{\be}-B_{\be}\, d\mu_{\beta A + B_\beta}+e^{-\beta \wt\delta}\cdot\int B_{\be} \, d\mu_{\beta A + C_\beta}\\
&\ge & \int \BBone_{[\om]}\, d\mu_{\beta A + B_\beta}-e^{-\beta \wt\delta}\cdot\int B_{\be}\, d\mu_{\beta A + B_\beta}+e^{-\beta \wt\delta}\cdot\int B_{\be} \, d\mu_{\beta A + C_\beta}\\
&\ge& \mu_{\be A+B_{\be}}([\om])-2e^{\be(\wt\de'-\wt\de)},
\end{eqnarray*}
where we use in the last inequality that $\mu_{\be A+B_{\be}}$ and $\mu_{\be A+C_{\be}}$ are probability measures. 
Since $\wt\de'<\wt\de$, letting $\be\to+\8$ yields 
$$\mu_{\8}([\om])\ge \limsup_{\be\to+\8}\mu_{\be A+C_{\be}}([\om])\ge \limsup_{\be\to+\8}\mu_{\be A+B_{\be}}([\om]).$$
Similarly, using \eqref{eq2}, { we also get:
\[	\mu_{\beta A - C_\beta}([\om])\leq   \mu_{\be A+B_{\be}}([\om])+ 2e^{\be(\wt\de'-\wt\de)}\]
and so}
$$\mu_{\8}([\om])\le \liminf_{\be\to+\8}\mu_{\be A-C_{\be}}([\om])\le \liminf_{\be\to+\8}\mu_{{ \be A+B_{\be}}}([\om]).$$

	\end{proof}}

\begin{corollary}  { Let $A:X\to\mathbb{R}$ be any Lipschitz function.} Assume that  $\mu_{\be A}$ converges to $\mu_{\8}$ and that for any fixed $n\in \mathbb{N}$, $\wt\delta<\gamma$ and any family of functions $C_\beta$ depending on $n$ coordinates and satisfying $|C_\beta|_\infty \leq e^{\beta \wt\delta}$,  $\mu_{\beta A + C_\beta}\to \mu_{\infty}$ holds.  Then, for any $\wt\delta'<\gamma$ and any Lipschitz functions $B_\beta$ satisfying $|B_\beta|_\infty < e^{\beta \wt\delta'}$ we have $\mu_{\beta A + B_\beta}\to\mu_\infty$. 
\end{corollary}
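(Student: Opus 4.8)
The plan is to reduce the weak$^{*}$ convergence $\mu_{\be A+B_{\be}}\to\mu_{\8}$ to the single statement that $\mu_{\be A+B_{\be}}([\om])\to\mu_{\8}([\om])$ for every cylinder $[\om]$, and then to obtain this cylinder‑wise convergence by applying Lemma~\ref{cilinder} to each cylinder separately.

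First I would recall that on the subshift $X$ the indicators $\BBone_{[\om]}$ of cylinders have linear span equal to a dense subalgebra of $\CC(X)$: it is an algebra because a product of two cylinder indicators is again a cylinder indicator or $0$, it contains the constants and separates points, so density follows from Stone--Weierstrass (using that $X$ is compact and totally disconnected). Since the measures $\mu_{\be A+B_{\be}}$ are all probabilities, hence a uniformly bounded family, the convergence $\int f\,d\mu_{\be A+B_{\be}}\to\int f\,d\mu_{\8}$ for every $f\in\CC(X)$ is equivalent to $\mu_{\be A+B_{\be}}([\om])\to\mu_{\8}([\om])$ for every cylinder $[\om]$. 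So it suffices to prove the latter.

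Next, fix a cylinder $[\om]$, depending on $n$ coordinates say, and fix $\wt\delta<\gamma$. I would observe that each of the two one‑parameter families $\pm e^{\be\wt\delta}\BBone_{[\om]}$ depends on those same $n$ coordinates and has sup norm $e^{\be\wt\delta}\le e^{\be\wt\delta}$; hence the standing hypothesis of the corollary applies to both and yields $\mu_{\be A+e^{\be\wt\delta}\BBone_{[\om]}}\to\mu_{\8}$ and $\mu_{\be A-e^{\be\wt\delta}\BBone_{[\om]}}\to\mu_{\8}$. Evaluating on the clopen set $[\om]$ gives in particular
\[
\limsup_{\be\to+\8}\mu_{\be A+e^{\be\wt\delta}\BBone_{[\om]}}([\om])\le\mu_{\8}([\om])
\qquad\text{and}\qquad
\liminf_{\be\to+\8}\mu_{\be A-e^{\be\wt\delta}\BBone_{[\om]}}([\om])\ge\mu_{\8}([\om]),
\]
which are exactly the two inequalities required as hypotheses in Lemma~\ref{cilinder} for this cylinder (with the family $C_{\be}=e^{\be\wt\delta}\BBone_{[\om]}$). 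Since $\wt\delta<\gamma$ was arbitrary, Lemma~\ref{cilinder} applies and gives: for every $\wt\delta'<\gamma$ and every family $(B_{\be})$ of Lipschitz functions with $|B_{\be}|_{\8}<e^{\be\wt\delta'}$ one has $\mu_{\be A+B_{\be}}([\om])\to\mu_{\8}([\om])$.

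Finally, since the cylinder $[\om]$ was arbitrary, the reduction in the first step upgrades this to $\mu_{\be A+B_{\be}}\to\mu_{\8}$ in the weak$^{*}$ topology, which is the assertion. This argument is essentially bookkeeping, so I do not expect a genuine analytic obstacle; the only points that take a moment of care are matching the non‑strict bound $|C_{\be}|_{\8}\le e^{\be\wt\delta}$ in the corollary's hypothesis with the exact normalization $C_{\be}=e^{\be\wt\delta}\BBone_{[\om]}$ used in Lemma~\ref{cilinder}, and the density‑plus‑uniform‑boundedness step that converts convergence on all cylinders into weak$^{*}$ convergence.
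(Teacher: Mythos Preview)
Your proof is correct and follows essentially the same approach as the paper's own proof: reduce weak$^{*}$ convergence to convergence on cylinders, then for each cylinder apply the hypothesis to the particular families $\pm e^{\be\wt\delta}\BBone_{[\om]}$ and invoke Lemma~\ref{cilinder}. The only difference is that you spell out the Stone--Weierstrass reduction and the verification of Lemma~\ref{cilinder}'s hypotheses in more detail, whereas the paper states both steps tersely.
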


\begin{proof}
	The selection of measure is determined by the limits
	\[\lim_{\beta \to\infty} \mu_{\beta A +B_\beta}([\om]),\,\,\, [\om]\,\,\,\text{is\, cylinder}.\]
	For each fixed cylinder  $[\om]$, we can construct a function $C_\beta:=e^{\be\wt\de}\BBone_{[\om]},$ which depends of finite coordinates. By hipothesis, 
	\[\lim_{\beta \to \infty} \mu_{\beta A + C_\beta}([\om])=\lim_{\beta \to \infty} \mu_{\beta A - C_\beta}([\om])=\mu_\infty([\om]).\]  Then, from Lemma \ref{cilinder}  we have 
	\[\lim_{\beta \to \infty} \mu_{\beta A + B_\beta}([\om]) =\mu_\infty([\om]).\]
\end{proof}

\subsubsection{Selection of maximizing measure}

 Now we prove the convergence of the measures  $\mu_{\beta A +B_\beta}$ and $\mu_{\be A}$ as stated in Theorem \ref{theoB}.
We will present explicitly the limit measure.

By Lemma \ref{pressureselect}, any accumulation point for $\mu_{\be A+B_{\be}}$ is $A$-maximizing, hence of the form $s\delta_{0^\infty} + (1-s) \delta_{1^\infty}$, with  $s\in [0,1]$. Note that it is sufficient to get that $\mu_{\beta A+B_\beta}([0])$ and $\mu_{\beta A+B_\beta}([1])$ converge to determine $s$. 
As $\mu_{\beta A+B_\beta}([1]) = 1-\mu_{\beta A+B_\beta}([0])$, it thus sufficient to get that $\mu_{\beta A+B_\beta}([0])$ converges. 
Furthermore, Lemma \ref{cilinder} yields that the convergence for $\mu_{\beta A+B_\beta}([0])$  follows from the convergence of $\mu_{\beta A+C_{\be}\BBone_{[0]}}([0])$ with $|C_{\be}|$ going faster to 0 than $e^{\be\ga}$. 
Therefore we can suppose that $B_\beta$ only depends  on the first coordinate and satisfies 
\[B_\beta(0)=a_\beta \,\,\,\text{and}\,\,\,B_\beta(1) = 0,\] 
with $|a_\beta|\leq e^{\beta \delta}$,\,$\delta<\gamma$.

Let us set 
$$S_{0}(\beta) = \frac{ 1+\sum_{j=1}^{\infty}(j+1)e^{\beta(a_{2}+...+a_{1+j})+ja_\beta-jP(\beta A+B_\beta)}}{1+\sum_{j=1}^{\infty}e^{\beta(a_{2}+...+a_{1+j})+ja_\beta-jP(\beta A+B_\beta)}},$$
$$S_{1}(\beta) = \frac{1+\sum_{j=1}^{\infty}(j+1)e^{\beta(c_{2}+...+c_{1+j})-jP(\beta A+B_\beta)}}{ 1+\sum_{j=1}^{\infty}e^{\beta(c_{2}+...+c_{1+j})-jP(\beta A+B_\beta)}}. $$
{ The potential $\beta A +B_\beta$ does not satisfy $\eqref{W}$ because it can be positive in a neighborhood of $0^{\infty}$ while it is equal to zero in $1^{\infty}$, but the same proof of Proposition 6 in \cite{BLM} can be applied in order to conclude that }
\[\mu_{\beta A+B_\beta}([0]) = \frac{S_0(\beta)}{S_0(\beta) + S_1(\beta)} \hspace{1cm} \text{and} \hspace{1cm} \mu_{\beta A+B_\beta}([1]) = \frac{S_1(\beta)}{S_0(\beta) + S_1(\beta)}.\]

In a first step we  prove that the limit for $\frac{S_0(\beta)}{S_1(\beta)}=\frac{\mu_{\beta A+B_\beta}([0])}{\mu_{\beta A+B_\beta}([1])}$ does not depend on $a_\beta$. In a second moment we prove that such limit exists.  
For that purpose, we consider a series of claims. From now on in this section the notation $f(\beta)\sim g(\beta)$ means $\lim_{\beta \to +\infty} \frac{f(\beta)}{g(\beta)} = 1.$ Clearly if $S_0(\beta)\sim F(\beta)$ and $S_1(\beta) \sim G(\beta)$ then $\lim_{\beta \to +\infty} \frac{S_0(\beta)}{S_1(\beta)} = \lim_{\beta \to +\infty} \frac{F(\beta)}{G(\beta)}.$ Next lemma present basic properties of $\sim$ which will be used. 

\begin{lemma}\label{lema:basic sim} For any positive functions $f, g$ we have:\newline
	1. If $\lim_{\beta\to+\infty} f(\beta) =0$ then $(1+f(\beta)+g(\beta))\sim (1+g(\beta))$;\newline
	2. If $\lim_{\beta\to+\infty} f(\beta) =1$ then $(1+f(\beta)\cdot g(\beta)) \sim (1+g(\beta))$; \newline
	3. If $\lim_{\beta\to+\infty} f(\beta) =1$ and $h$ is a function satisfying $ (1+g(\beta)) \leq 1+h(\beta) \leq 1+f(\beta)\cdot g(\beta)$ then $(1+h(\beta))\sim (1+g(\beta))$;\newline
	4. If $f(\beta) \sim {f_0}(\beta) $ then $(1+f(\beta)g(\beta))\sim (1+{f_0}(\beta)g(\beta))$.
\end{lemma}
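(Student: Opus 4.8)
The plan is to reduce all four items to a single elementary observation: for any positive function $g$ one has $0 < \frac{g(\beta)}{1+g(\beta)} < 1$, so that $1+g(\beta)\geq 1$ and any ``correction term'' divided by $1+g(\beta)$ is controlled by that same correction term without the denominator. I would record this remark once at the start and then treat the four claims in turn, doing item 2 first since items 3 and 4 invoke it.

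For item 1 I would just write $\frac{1+f(\beta)+g(\beta)}{1+g(\beta)} = 1 + \frac{f(\beta)}{1+g(\beta)}$ and bound $0 < \frac{f(\beta)}{1+g(\beta)} \leq f(\beta) \to 0$, so the ratio tends to $1$. For item 2 the key algebraic identity is
\[
\frac{1+f(\beta)g(\beta)}{1+g(\beta)} = 1 + \frac{(f(\beta)-1)\,g(\beta)}{1+g(\beta)},
\]
together with the bound $\left| \frac{(f(\beta)-1)g(\beta)}{1+g(\beta)} \right| \leq |f(\beta)-1| \to 0$; this proves item 2 and is the (entirely routine) technical heart of the lemma.

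Items 3 and 4 then follow immediately. For item 3, dividing the hypothesis $1+g(\beta) \leq 1+h(\beta) \leq 1+f(\beta)g(\beta)$ by the positive quantity $1+g(\beta)$ gives $1 \leq \frac{1+h(\beta)}{1+g(\beta)} \leq \frac{1+f(\beta)g(\beta)}{1+g(\beta)}$, and since the right-hand side tends to $1$ by item 2, the squeeze theorem finishes it. For item 4, writing $u(\beta) := f(\beta)/f_0(\beta) \to 1$ and $G(\beta) := f_0(\beta)g(\beta)$, one has $\frac{1+f(\beta)g(\beta)}{1+f_0(\beta)g(\beta)} = \frac{1+u(\beta)G(\beta)}{1+G(\beta)}$, which tends to $1$ by item 2 applied to the pair $u, G$.

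There is no serious obstacle here; the only thing requiring (trivial) care is to keep track of positivity so that $1+g(\beta)\geq 1$ throughout and the bounds $\frac{g(\beta)}{1+g(\beta)} \leq 1$ are legitimate, and to prove item 2 before items 3 and 4. I would also make sure, when this lemma is later applied in the claims that follow, that the functions it is fed (tails of the series defining $S_0(\beta)$ and $S_1(\beta)$) are genuinely positive, so that the hypotheses are met.
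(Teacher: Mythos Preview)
Your proof is correct and matches the paper's argument almost line for line: the same algebraic identities for items 1 and 2, the same squeeze via item 2 for item 3, and the same substitution $u=f/f_0$, $G=f_0 g$ to reduce item 4 to item 2. The only cosmetic difference is that you explicitly record the bound $0<\frac{g}{1+g}<1$ up front, whereas the paper invokes it inline.
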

\begin{proof} 
	The proof of item 1. follows from
	\[\lim_{\beta \to+\infty} \frac{1+f(\beta)+g(\beta)}{1+g(\beta)} = 1+\lim_{\beta \to+\infty} \frac{f(\beta)}{1+g(\beta)}=1,\]
	because $f(\beta)\to 0$ and $g$ is positive.
	
	The proof of item 2. follows from
	\[\lim_{\beta \to+\infty} \frac{1+f(\beta)\cdot g(\beta)}{1+g(\beta)} = 1 + \lim_{\beta \to+\infty} \frac{(f(\beta)-1)\cdot g(\beta)}{1+g(\beta)} = 1,\]
	because $(f(\beta) -1) \to 0 $ and $0<\frac{g(\beta)}{1+g(\beta)}<1.$
	
	The proof of item 3. is a consequence of 2.. Indeed, we have 
	\[1 = \frac{1+g(\beta)}{1+g(\beta)} \leq \frac{1+h(\beta)}{1+g(\beta)} \leq \frac{1+f(\beta)g(\beta)}{1+g(\beta)}\]
	and the right hand side function converges to 1 (because item 2.). 
	
	The proof of item 4. is again a consequence of item 2..  We have
	\[1+f(\beta)g(\beta) = 1 + \frac{f(\beta)}{{f_0}(\beta)}{f_0}(\beta)g(\beta) \sim 1 + {f_0}(\beta)g(\beta)\]
because, by hypothesis, $\frac{f(\beta)}{{f_0}(\beta)}\to 1$.

\end{proof}

\begin{claim}\label{claim1}
 \begin{equation}\label{eq - 0}
\lim_{\beta \to +\infty} \sum_{j<\beta} (j+1)e^{\beta(a_{2}+...+a_{1+j})+ja_\beta-jP(\beta A+B_\beta)}=0.
\end{equation}
\end{claim}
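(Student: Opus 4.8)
The plan is to show that each term $(j+1)e^{\beta(a_{2}+\cdots+a_{1+j})+ja_\beta-jP(\beta A+B_\beta)}$ in the range $j<\beta$ is exponentially small in $\beta$, uniformly in $j$, and then sum a geometric-type bound. First I would control the exponent: by \eqref{gamma Walters} we have $\gamma\ge \frac{\sum_{n\ge2}a_n+\sum_{n\ge2}c_n+b+d}{2}$ and also $\gamma\ge \sum_{n\ge2}c_n+b+d$, but the quantity that enters here is really $\sum_{n\ge2}a_n$; since all $a_n<0$ the partial sums $a_{2}+\cdots+a_{1+j}$ are negative and decreasing in $j$, so $a_{2}+\cdots+a_{1+j}\le a_2<0$ for all $j\ge1$. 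Meanwhile $|a_\beta|\le e^{\beta\delta}$ with $\delta<\gamma<0$, so $ja_\beta\le j e^{\beta\delta}$, and $-jP(\beta A+B_\beta)$ is controlled using \eqref{eq-cvpresspertu}, which gives $\frac1\beta\log P(\beta A+B_\beta)\to\gamma<0$; in particular $P(\beta A+B_\beta)<0$ for $\beta$ large, so $-jP(\beta A+B_\beta)\le j|P(\beta A+B_\beta)|\le j e^{\beta(\gamma+\epsilon)}$ for any $\epsilon>0$ and $\beta$ large.

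Next, putting these together, for $1\le j<\beta$ and $\beta$ large the exponent is at most
\[
\beta a_2 + j e^{\beta\delta} + j e^{\beta(\gamma+\epsilon)} \le \beta a_2 + 2\beta e^{\beta(\gamma+\epsilon)},
\]
using $j<\beta$ and choosing $\epsilon$ so small that $\gamma+\epsilon<0$ and also $\gamma+\epsilon>\delta$ does not matter — both $e^{\beta\delta}$ and $e^{\beta(\gamma+\epsilon)}$ tend to $0$. Since $\beta e^{\beta(\gamma+\epsilon)}\to 0$, the exponent is bounded above by $\beta a_2 + 1$ for $\beta$ large, so each term is at most $(j+1)e^{\beta a_2+1}$. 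The $j=0$ term is $e^{0}=1$, which does \emph{not} go to zero — so I would instead sum from $j=1$; but note the statement as written includes $j=0$ implicitly only if the index set $\{j<\beta\}$ starts at $j=1$ (matching the definition of $S_0(\beta)$, where the sum starts at $j=1$). Assuming the sum runs over $1\le j<\beta$, we get
\[
0\le \sum_{1\le j<\beta}(j+1)e^{\beta(a_{2}+\cdots+a_{1+j})+ja_\beta-jP(\beta A+B_\beta)} \le e^{\beta a_2+1}\sum_{1\le j<\beta}(j+1) \le e^{\beta a_2+1}\,\beta^2,
\]
and since $a_2<0$ we have $\beta^2 e^{\beta a_2}\to 0$, which gives \eqref{eq - 0}.

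The main obstacle is making the bound on $-jP(\beta A+B_\beta)$ genuinely uniform in $j<\beta$ and compatible with the decay coming from $e^{\beta a_2}$: one must check that the "loss" term $j e^{\beta(\gamma+\epsilon)}$ (which is at most $\beta e^{\beta(\gamma+\epsilon)}$, hence $o(1)$) is indeed negligible compared with the fixed negative drift $\beta a_2$ — this is why the truncation $j<\beta$ is essential and why the full sum over all $j$ is treated separately later. A secondary point to be careful about is the sign and size of $a_\beta$: since $|a_\beta|\le e^{\beta\delta}$ with $\delta<\gamma$, the term $ja_\beta$ is bounded by $\beta e^{\beta\delta}=o(1)$ on the truncated range, so it is harmless, but one should record that $a_\beta$ could be positive, which is exactly the reason the estimate is phrased with absolute values throughout.
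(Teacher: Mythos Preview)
Your approach is essentially the same as the paper's: bound each summand by $(j+1)e^{\beta a_2 + o(\beta)}$ using $a_2+\cdots+a_{1+j}\le a_2<0$ and $j<\beta$, then sum to get something like $\beta^2 e^{\beta a_2}\to 0$. The argument is correct in the end.

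However, you have a sign error in your handling of the pressure term. You write that $\tfrac1\beta\log P(\beta A+B_\beta)\to\gamma<0$ implies $P(\beta A+B_\beta)<0$ for large $\beta$; this is false --- the logarithm is taken of a \emph{positive} quantity, and the statement actually says $P(\beta A+B_\beta)\approx e^{\beta\gamma}>0$, decaying to zero from above. Your subsequent bound $-jP(\beta A+B_\beta)\le j|P(\beta A+B_\beta)|\le j e^{\beta(\gamma+\epsilon)}$ happens to survive this confusion (since $-jP<0$ trivially when $P>0$), so the conclusion is unaffected. The paper exploits the correct sign more directly: since $P(\beta A+B_\beta)\ge P(\beta A)-e^{\beta\delta}>0$ for large $\beta$, one simply drops the term $-jP(\beta A+B_\beta)\le 0$ altogether and obtains the one-line estimate
\[
0\le \sum_{1\le j<\beta}(j+1)e^{\beta(a_{2}+\cdots+a_{1+j})+ja_\beta-jP(\beta A+B_\beta)}\le (\beta+1)^2\, e^{\beta a_2+\beta|a_\beta|}\to 0.
\]
This is cleaner than routing through $j e^{\beta(\gamma+\epsilon)}$ and then arguing that $\beta e^{\beta(\gamma+\epsilon)}=o(1)$.
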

\begin{proof}[Proof of the Claim]
 Note that $P(\beta A+B_\beta)\geq P(\beta A)-e^{\beta \delta} >0$  holds for $\beta$ large enough.
 Hence, 
 $$0\leq \sum_{j<\beta}(j+1)e^{\beta(a_{2}+...+a_{1+j})+ja_\beta-jP(\beta A+B_\beta)}\leq   (\beta+1)^2 e^{\beta\cdot a_{2} +\beta \cdot |a_\beta|}.$$
As $a_\beta \to 0$ and $a_2<0$ we conclude \eqref{eq - 0}.
\end{proof}

 \begin{remark}
\label{rem-a2}
We emphasize that this is the unique place where we assume $a_{2}<0$ and $c_{2}<0$. Except here, assumptions $\sum a_{n}\le 0$, $\sum c_{n}\le 0$, $b+d<0$ + the fact that the Aubry set is $\{0^{\8},1^{\8}\}$  are sufficient. 
$\blacksquare$\end{remark}

It follows from Claim \ref{claim1} and item 1. of Lemma \ref{lema:basic sim} that
\begin{equation}\label{eq:S_0 first}
S_0(\beta) \sim  \frac{ 1+\sum_{j\geq \beta}(j+1)e^{\beta(a_{2}+...+a_{1+j})+ja_\beta-jP(\beta A+B_\beta)}}{1+\sum_{j\geq \beta}e^{\beta(a_{2}+...+a_{1+j})+ja_\beta-jP(\beta A+B_\beta)}}.
\end{equation} 

In the following $a$ stands for $\sum a_{n}$ and $c$ stands for $\sum c_{n}$.
	
\begin{claim}\label{claim4} There exists $C>0$ such that for every $j\ge 1$,
	$$c\le c_2+...+c_{1+j} \le c+ C\theta^{j}
	\text{ and }
	a\le a_2+...+a_{1+j} \le a+C\theta^{j}$$
\end{claim}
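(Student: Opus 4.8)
The claim is a statement about the tails of the convergent series $\sum a_n$ and $\sum c_n$, together with the fact that $A$ is Lipschitz continuous.

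The plan is to use the Lipschitz hypothesis on $A$ to control the rate of convergence of these series. Since $A$ is Lipschitz with constant $Lip(A)$, and two points in the cylinder $[0^n1]$ lie within distance $\theta^n$ of the fixed point $0^\infty$, the value $a_n = A|_{[0^n1]}$ must satisfy $|a_n| = |a_n - A(0^\infty)| \le Lip(A)\cdot\theta^n$; more precisely, since the cylinder $[0^{n}1]$ is at distance at most $\theta^{n}$ from $0^{\infty}$ (the first $n$ coordinates agree with $0^{\infty}$), we get $|a_n| \le Lip(A)\,\theta^{n}$. The same bound, with the same constant, holds for $|c_n|$ using the fixed point $1^{\infty}$.

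First I would fix $C_0 := Lip(A)$, so that $|a_n|\le C_0\theta^n$ and $|c_n|\le C_0\theta^n$ for all $n\ge 2$. Then I would write, for the $a$-tail,
\[a - (a_2+\cdots+a_{1+j}) = \sum_{n\ge 2+j} a_n,\]
so that
\[\left| a - (a_2+\cdots+a_{1+j}) \right| \le \sum_{n\ge 2+j} |a_n| \le C_0 \sum_{n\ge 2+j}\theta^n = \frac{C_0\theta^{2+j}}{1-\theta} = \left(\frac{C_0\theta^{2}}{1-\theta}\right)\theta^{j}.\]
Setting $C := \frac{C_0\theta^{2}}{1-\theta}$ (which depends only on $Lip(A)$ and $\theta$, not on $j$), this gives $|a - (a_2+\cdots+a_{1+j})| \le C\theta^{j}$, and the same computation with $c_n$ in place of $a_n$ gives $|c - (c_2+\cdots+c_{1+j})| \le C\theta^{j}$. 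Since all the $a_n$ and $c_n$ are negative, each partial sum $a_2+\cdots+a_{1+j}$ is larger than the full sum $a$ (we are omitting negative terms), so $a \le a_2+\cdots+a_{1+j}$ automatically, and combining with the absolute-value bound yields $a \le a_2+\cdots+a_{1+j} \le a + C\theta^{j}$, and likewise for the $c$'s. Taking the same $C$ for both chains of inequalities (enlarging it if one wishes a single uniform constant) completes the proof.

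There is essentially no obstacle here: the only point requiring a moment's care is the correct exponent bookkeeping, namely that the cylinder $[0^{n}1]$ has diameter $\le\theta^{n}$ and the tail $\sum_{n\ge 2+j}\theta^{n}$ is a geometric series equal to $\theta^{2+j}/(1-\theta)$, so that the exponent of $\theta$ that survives is exactly $j$ (not $j+1$ or $j+2$), matching the statement; absorbing the constant $\theta^{2}/(1-\theta)$ into $C$ takes care of the rest.
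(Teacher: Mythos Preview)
Your proof is correct and follows essentially the same approach as the paper's own proof: both use the Lipschitz bound $|a_n|,|c_n|\le C_0\theta^n$ (from $A(0^\infty)=A(1^\infty)=0$) and then sum the geometric tail $\sum_{n\ge j+2}\theta^n$ to get the constant $C=\frac{C_0\theta^{2}}{1-\theta}$. You are slightly more explicit than the paper in noting that the lower bound $a\le a_2+\cdots+a_{1+j}$ follows from the negativity of the $a_n$, which the paper leaves implicit.
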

\begin{proof}[Proof of the Claim]
	Indeed, as $A$ is Lipschitz, there exists $C_0>0$ such that, for $n\geq 2$, 
	$$-c_n = -A(1^{n}0^\infty) \leq -A(1^{\infty}) + C_0\cdot d(1^n0^\infty,1^{\infty}) = C_0\theta^n.$$
	Then, for each $j \geq \beta$, we have 
	$$c_2+...+c_{1+j} = \sum_{i=2}^{\infty}c_{i}- \sum_{i=j+2}^{\infty}c_{i}\leq \sum_{i=2}^{\infty} c_{i}+\sum_{i=j+2}^{\infty}C_0\theta^{i}\leq \sum_{i=2}^{\infty}c_{i} + \frac{C_0\theta^{j+2}}{1-\theta}. $$
	A similar  computation can be used for $a$'s.
\end{proof}

It follows from Claim \ref{claim4} that   
\[1+ e^{\beta a}\sum_{j\geq \beta}e^{-jP(\beta A+B_\beta)}\leq 1+\sum_{j\geq \beta}e^{\beta(a_{2}+...+a_{1+j})-jP(\beta A+B_\beta)} \leq 1+ e^{\beta C\theta^{\beta}}\cdot e^{\beta a}\sum_{j\geq \beta}e^{-jP(\beta A+B_\beta)}.\]
As $\lim_{\beta\to+\infty}e^{\beta C\theta^{\beta}}= 1$, applying item 3. of Lemma \ref{lema:basic sim}, we get 
\[ 1+\sum_{j\geq \beta}e^{\beta(a_{2}+...+a_{1+j})-jP(\beta A+B_\beta)} \sim 1+ e^{\beta a}\sum_{j\geq \beta}e^{-jP(\beta A+B_\beta)}.\]
With a similar computation we also obtain
\[  1+\sum_{j\geq \beta}(j+1)e^{\beta(a_{2}+...+a_{1+j})+ja_\beta-jP(\beta A+B_\beta)} \sim 1+e^{\beta a} \sum_{j\geq \beta}(j+1)e^{ja_\beta-jP(\beta A+B_\beta)}.\]
Applying this results in equation \eqref{eq:S_0 first} we get
\begin{equation}\label{eq:S_0 second} 
	S_0(\beta) \sim \frac{ 1+e^{\beta a}\sum_{j\geq \beta}(j+1)e^{ja_\beta-jP(\beta A+B_\beta)}}{1+e^{\beta a}\sum_{j\geq \beta}e^{ja_\beta-jP(\beta A+B_\beta)}}.
\end{equation}

Let us consider now the next step. From simple computations we get, for any $z>0$ and $k \in\mathbb{N}$,
\[\sum_{j=k}^{\infty} e^{-jz} = \dfrac{e^{-kz}}{1- e^{-z}} \,\,\,\,\,\,and \,\,\,\,\,\,\sum_{j=k}^{\infty} (j+1) e^{-jz} = \frac{e^{-kz}}{1-e^{-z}}\left(k+\frac1{1-e^{-z}}\right) .\]

\begin{claim}\label{claim3}
	\[\sum_{j\geq \beta}e^{ja_\beta-jP(\beta A+B_\beta)} \sim \frac{1}{P(\beta A)}\,\,\, and \,\,\, \sum_{j\geq \beta}(j+1)e^{ja_\beta-jP(\beta A+B_\beta)} \sim \frac{1}{(P(\beta A))^2} .\]
\end{claim}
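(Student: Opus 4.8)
The plan is to reduce both series to the two elementary sums recalled just above the Claim. Set $k_\beta := \lceil\beta\rceil$, so that $\beta\le k_\beta<\beta+1$, and put $z(\beta) := P(\beta A + B_\beta) - a_\beta$; then $ja_\beta - jP(\beta A+B_\beta) = -j\,z(\beta)$, so the two sums in the Claim are exactly $\sum_{j\ge k_\beta}e^{-jz(\beta)}$ and $\sum_{j\ge k_\beta}(j+1)e^{-jz(\beta)}$, and the identities $\sum_{j\ge k}e^{-jz}=\frac{e^{-kz}}{1-e^{-z}}$ and $\sum_{j\ge k}(j+1)e^{-jz}=\frac{e^{-kz}}{1-e^{-z}}\big(k+\frac1{1-e^{-z}}\big)$ apply. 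Everything then reduces to controlling the asymptotics of $z(\beta)$.

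The first step is to show $z(\beta)\sim P(\beta A)$, hence $z(\beta)>0$ for large $\beta$ and $z(\beta)\to0$. Indeed, \eqref{eq-pressbb} gives $|P(\beta A+B_\beta)-P(\beta A)|\le|B_\beta|_\infty<e^{\beta\delta}$, and in the present reduction $|a_\beta|\le e^{\beta\delta}$ with $\delta<\gamma$; since $\frac1\beta\log P(\beta A)\to\gamma$ (Theorem \ref{theoA}, see also \eqref{eq-cvpresspertu}) and $\gamma<0$, one gets $e^{\beta\delta}=o\big(P(\beta A)\big)$. Dividing $z(\beta)=P(\beta A+B_\beta)-a_\beta$ by $P(\beta A)$ and letting $\beta\to\infty$ then gives the ratio $\to1$. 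One should also record that $\gamma<0$ forces $\beta\,P(\beta A)\to0$, so that $k_\beta z(\beta)\le(\beta+1)z(\beta)\to0$; thus the boundary index $k_\beta$ is negligible against $1/z(\beta)$.

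The second step is to substitute into the closed forms. For the first sum, $k_\beta z(\beta)\to0$ gives $e^{-k_\beta z(\beta)}\to1$, while $z(\beta)\to0^+$ gives $1-e^{-z(\beta)}\sim z(\beta)$, whence $\sum_{j\ge\beta}e^{-jz(\beta)}\sim 1/z(\beta)\sim 1/P(\beta A)$. For the weighted sum, dividing the two closed forms writes it as $\big(\sum_{j\ge\beta}e^{-jz(\beta)}\big)\big(k_\beta+\frac1{1-e^{-z(\beta)}}\big)$; the first factor is $\sim1/P(\beta A)$ by the previous line, and the second is $\sim1/z(\beta)\sim1/P(\beta A)$ because $\frac1{1-e^{-z(\beta)}}\sim1/z(\beta)$ dominates $k_\beta=o\big(1/z(\beta)\big)$. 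Multiplying these asymptotic equivalences yields $\sim 1/(P(\beta A))^2$.

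No genuine difficulty is expected; the only point requiring care is the competition between the exponentially small pressure $P(\beta A)=e^{\beta\gamma+o(\beta)}$ (with $\gamma<0$) and the polynomially growing quantities $\beta$, $k_\beta$ and the perturbative scale $e^{\beta\delta}$. Both facts that make the argument run — namely $e^{\beta\delta}=o(P(\beta A))$ and $\beta\,P(\beta A)=o(1)$ — follow at once from $\delta<\gamma<0$.
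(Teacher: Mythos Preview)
Your proof is correct and follows essentially the same approach as the paper: define $z(\beta)=P(\beta A+B_\beta)-a_\beta$, check $z(\beta)\sim P(\beta A)$ and $\beta z(\beta)\to0$, then plug into the closed-form geometric sums. Your treatment is slightly more careful (making the lower index an integer via $k_\beta=\lceil\beta\rceil$, and factoring the weighted sum explicitly), but the argument is the same.
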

\begin{proof}[Proof of the Claim]
As $\lim_{x\to+\infty}\frac{x}{1-e^{-x}}=1$, if $z$ is a function such that $z(\beta)\downarrow 0$ and $\beta\cdot z(\beta) \to 0$  we get
\[	\sum_{j\geq \beta} e^{-jz(\beta)} = \dfrac{e^{-\beta z(\beta)}}{1- e^{-z(\beta)}} \sim \frac{1}{z(\beta)}.\]
Furthermore,
\[\sum_{j\geq \beta} (j+1) e^{-jz(\beta)} = \frac{e^{-\beta z(\beta)}}{1-e^{-z(\beta)}}\left(\beta+\frac1{1-e^{-z(\beta)}}\right) \sim \frac{\beta}{z(\beta)}+\frac{1}{(z(\beta))^2} \sim \frac{\beta z(\beta)+1}{(z(\beta))^2}\sim \frac{1}{(z(\beta))^2}.\]
Finally we take $z(\beta) = -a_\beta +P(\beta A+B_\beta)$. Observe that $z(\beta) \sim P(\beta A)$ and $\beta\cdot P(\beta A) \to 0$ because $P(\beta A)$ { and $a_{\be}$ converge exponentially fast  to zero, respectively at rate $e^{\be.\ga}$ and $e^{\be.\de}$ with $\gamma$ as given in \eqref{gamma Walters} and $\de<\ga$}. 
\end{proof}

Claim \ref{claim3}, item 4. of Lemma \ref{lema:basic sim} and equation \eqref{eq:S_0 second} together yield
\[S_0(\beta) \sim  \frac{ 1+e^{\beta a}\frac{1}{(P(\beta A)^2}}{1+e^{\beta a}\frac{1}{P(\beta A)}}.\]
Similarly we get 
\[S_1(\beta) \sim  	\frac{ 1+e^{\beta c}\frac{1}{(P(\beta A))^2}}{1+e^{\beta c}\frac{1}{P(\beta A)}}\]
and therefore
	\begin{equation}
		\label{eq-fina3}
		\frac{S_{0}(\be)}{S_{1}(\be)}\sim \dfrac{(P(\be A))^2+e^{\be a}}{P(\be A)+e^{\be a}}\dfrac{P(\be A)+e^{\be c}}{(P(\be A))^2+e^{\be c}}.
	\end{equation}

 Observe that the right hand side does not depend of $B_\beta$. Then the limit of the ratio $\disp\dfrac{\mu_{\be A +B_\be}([0])}{\mu_{\be A+B_\be}([1])}=\frac{S_{0}(\be)}{S_{1}(\be)}$  does not depend of $B_\beta$. This finally shows that $\mu_{\be A+B_{\be}}$ converges to the same limit than  $\mu_{\be A}$, if it converges.  
 
 \bigskip
 Next proposition shows that $\mu_{\beta A+B_\beta}$ actually converges and  explicitly gives the limit.

\begin{proposition}\label{prop:limits measure} Under above hypothesis concerning $B_\beta$ and $A$: \newline
	1. If $a=c$ then $\mu_{\beta A +B_\beta} \to \frac{\delta_{0^{\infty}}+\delta_{1^{\infty}}}{2}$. \newline
	2. If $a>c$ and $a+b+d<c$ then $\mu_{\beta A +B_\beta} \to \frac{\delta_{0^{\infty}}+\delta_{1^{\infty}}}{2}$.\newline
	3. If $a>c$ and $a+b+d>c$ then $\mu_{\beta A +B_\beta} \to \delta_{0^{\infty}}$\newline
	4. If $a>c$ and $a+b+d=c$ then $\mu_{\beta A +B_\beta} \to \frac{10+2\sqrt{5}}{20}\delta_{0^{\infty}}+\frac{10-2\sqrt{5}}{20}\delta_{1^{\infty}}.$ \newline 
We get symmetric results in the case $c>a$.
\end{proposition}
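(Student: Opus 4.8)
The plan is to combine the asymptotic relation \eqref{eq-fina3} for the ratio $S_0(\be)/S_1(\be)$ with the explicit value of $\ga$ given in \eqref{gamma Walters}, and then analyze a finite number of cases according to the sign of $a-c$ and of $a+b+d-c$. Since $\mu_{\be A+B_\be}([0])=S_0(\be)/(S_0(\be)+S_1(\be))$ and $\mu_{\be A+B_\be}([1])=S_1(\be)/(S_0(\be)+S_1(\be))$, and the limit measure is of the form $s\delta_{0^\infty}+(1-s)\delta_{1^\infty}$ (Lemma \ref{pressureselect}), it suffices to compute $\lim_{\be\to\infty}S_0(\be)/S_1(\be)=:r\in[0,\infty]$, which gives $s=r/(1+r)$. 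First I would record that $P(\be A)\sim e^{\be\ga}$ as $\be\to\infty$ (this is Theorem \ref{theoA} applied here, using \eqref{gamma Walters} to identify $\ga$); in particular $P(\be A)\to 0$, $(P(\be A))^2=o(P(\be A))$, and $e^{\be a},e^{\be c}\to 0$ exponentially.

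The heart of the argument is then the elementary asymptotic analysis of
\[\frac{S_0(\be)}{S_1(\be)}\sim\frac{(P(\be A))^2+e^{\be a}}{P(\be A)+e^{\be a}}\cdot\frac{P(\be A)+e^{\be c}}{(P(\be A))^2+e^{\be c}},\]
which is governed entirely by the comparison of the three exponential rates $a$, $c$, $\ga$, $2\ga$. By symmetry (exchanging the roles of $0$ and $1$, hence of $a$ and $c$), I may assume $a\ge c$. I would treat the following subcases, in each of which one simply picks out the dominant term in numerator and denominator of each factor.

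First, if $a=c$, both factors are reciprocals of each other after the substitution $a\leftrightarrow c$, so the product tends to $1$ and $s=1/2$, giving statement 1. Second, suppose $a>c$. Here one uses \eqref{gamma Walters}: when $a>c$ one checks $\ga=\max\{a+b+d,(a+c+b+d)/2\}$ (the term $c+b+d$ is never the maximum since $c<a$), so $2\ga\ge a+b+d$ and $2\ga\ge a+c+b+d$; combined with $b+d<0$ this forces $\ga<a$ and, more precisely, $\ga\le a+b+d$ iff $a+b+d\ge c$ (case 3), while $\ga=(a+c+b+d)/2<a$ always. The key comparisons are then: in the first factor, $e^{\be a}$ dominates $(P(\be A))^2\sim e^{2\be\ga}$ (since $\ga<a$ when $a>c$) and also $P(\be A)\sim e^{\be\ga}$ iff $\ga<a$, which holds; so the first factor $\to 1$. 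In the second factor one compares $e^{\be c}$ with $e^{\be\ga}$ and $e^{2\be\ga}$: if $a+b+d<c$ then $\ga=(a+c+b+d)/2<c$ (using $a+b+d<c$) so $e^{\be c}$ dominates both $P(\be A)$ and $(P(\be A))^2$, the second factor $\to 1$, and $r=1$, giving statement 2; if $a+b+d>c$ then $\ga=a+b+d>c$, so $P(\be A)$ dominates $e^{\be c}$ while $(P(\be A))^2=o(P(\be A))$ may or may not dominate $e^{\be c}$—one checks $2\ga$ versus $c$: $2\ga=2(a+b+d)>c$ iff $a+b+d>c/2$, which follows from $a+b+d>c$ and $c<0$—so the second factor $\sim P(\be A)/(P(\be A))^2=1/P(\be A)\to\infty$, whence $r=\infty$, $s=1$, giving statement 3. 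Finally, the borderline case $a+b+d=c$ (with $a>c$): then $\ga=(a+c+b+d)/2=c$, so $e^{\be c}\sim e^{\be\ga}\sim P(\be A)$ and $(P(\be A))^2\sim e^{2\be c}=o(e^{\be c})$; the first factor still tends to $1$ (as $a>c=\ga$), and the second factor becomes, after dividing by $P(\be A)$, asymptotic to $(1+1)/(P(\be A)+1)\to 2$ once one is careful that $e^{\be c}/P(\be A)\to L$ for some finite $L>0$. Writing $t:=\lim e^{\be c}/P(\be A)$ (which exists and is positive, since both sides are $\sim$ a common quantity), the second factor tends to $(1+t)/(0+t)=(1+t)/t$ — but here one must be more careful: $(P(\be A))^2/e^{\be c}\to 0$ and $P(\be A)/e^{\be c}\to 1/t$, so the factor is $(1/t+1)/(0+1)$ times $e^{\be c}/e^{\be c}$... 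I would instead just substitute $P(\be A)\sim e^{\be\ga}=e^{\be c}$ directly (they are asymptotically equal up to a constant $\kappa>0$: $P(\be A)\sim\kappa e^{\be c}$), getting $S_0/S_1\to(\kappa^2 e^{2\be c}+e^{\be c})/(\kappa e^{\be c}+e^{\be c})\cdot(\kappa e^{\be c}+e^{\be c})/(\kappa^2 e^{2\be c}+e^{\be c})\to 1/(1)\cdot(\kappa+1)/1$ after dividing through, i.e. $r=\kappa+1$; pinning down $\kappa$ requires the sharp constant in $P(\be A)\sim\kappa e^{\be\ga}$, and the stated answer $s=(10+2\sqrt5)/20=(5+\sqrt5)/10$ corresponds to $r=s/(1-s)=(5+\sqrt5)/(5-\sqrt5)=(5+\sqrt5)^2/20=(30+10\sqrt5)/20=(3+\sqrt5)/2$, so $\kappa=(1+\sqrt5)/2$ is the golden ratio. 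I expect this value of $\kappa$ to come out of the precise asymptotics of $P(\be A)$ in the degenerate case — this is the one spot where the "$\sim$" bookkeeping has to be pushed to an exact constant rather than just a rate.

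The main obstacle, therefore, is the borderline case 4: everywhere else it is enough to know $P(\be A)\sim e^{\be\ga}$ up to a bounded, bounded-away-from-zero factor, and the limit $r$ is $0$, $1$, or $\infty$ regardless; but in case 4 the limit depends on the exact constant $\kappa=\lim_{\be\to\infty}P(\be A)e^{-\be\ga}$, so I would need to go back to the defining relation for $P(\be A)$ (e.g. the transfer operator identity at $0^\infty$ and $1^\infty$, or the fixed-point equation for $H_{\be A}$ restricted to the relevant cylinders) and extract the quadratic equation whose positive root is $\kappa$. I expect that equation to be $\kappa^2=\kappa+1$, i.e. $\kappa=(1+\sqrt5)/2$, forced by the symmetric structure $a+b+d=c$ which makes the two pieces $0^\infty$ and $1^\infty$ exchange mass through a single self-consistent renormalization; substituting back then yields $s=(5+\sqrt5)/10=(10+2\sqrt5)/20$ as claimed, and the symmetric statement for $c>a$ follows by the $0\leftrightarrow 1$ symmetry.
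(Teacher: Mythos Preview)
Your approach is the same as the paper's, and cases 1--3 are essentially right, but two points need repair. First, Theorem \ref{theoA} only gives $\frac1\be\log P(\be A)\to\ga$, not $P(\be A)\sim e^{\be\ga}$ in the sense of a ratio tending to $1$ (or even to a constant); the paper handles this by writing $P(\be A)=l(\be)e^{\be\ga}$ with $l(\be)$ merely subexponential ($e^{-\be\eps}<l(\be)<e^{\be\eps}$ for every $\eps>0$), and all the comparisons in cases 1--3 go through at that level of precision. Second, in case 3 your claim that $2\ga>c$ is false in general (take $a+b+d=-3$, $c=-4$: then $\ga=-3>c$ but $2\ga=-6<c$); fortunately you do not need it, since the numerator of the second factor is $\sim P(\be A)$ while \emph{both} terms of the denominator are $o(P(\be A))$, so the ratio diverges regardless of which denominator term dominates.

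For case 4 you have correctly located the real obstacle: one needs the existence and value of $\kappa=\lim_{\be\to\infty}P(\be A)e^{-\be\ga}$, and nothing proved so far gives even the existence of this limit. The paper obtains it from Walters's explicit identity for the pressure (corollary~3.5 in \cite{Walters}),
\[
e^{2P(\be A)}=e^{\be(b+d)}\Bigl(1+\sum_{j\ge1}e^{\be(a_2+\cdots+a_{1+j})-jP(\be A)}\Bigr)\Bigl(1+\sum_{j\ge1}e^{\be(c_2+\cdots+c_{1+j})-jP(\be A)}\Bigr),
\]
which after the same simplifications that led to \eqref{eq-fina3} becomes $e^{2P(\be A)}\sim e^{\be(b+d)}\bigl(1+\tfrac{e^{\be a}}{P(\be A)}\bigr)\bigl(1+\tfrac{e^{\be c}}{P(\be A)}\bigr)$. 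Under the hypothesis $a+b+d=c$ (so $\ga=c$) this collapses to $1\sim \tfrac1{l(\be)}+\tfrac1{l(\be)^2}$, forcing $l(\be)\to\tfrac{1+\sqrt5}{2}$ and hence $S_0/S_1\to l+1=\tfrac{3+\sqrt5}{2}$, which gives the stated weights. Without invoking some such explicit relation for the pressure you cannot close case 4.
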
 
\begin{proof} 
We will consider \eqref{eq-fina3}. Furthermore, from 	\eqref{gamma Walters} we have
\[\gamma=\max\left\{a+b+d, \,\,c+b+d,\,\,\frac{a+c+b+d}{2}\right\}.\]	

The proof of item 1. is consequence of \eqref{eq-fina3}. {In the case $a=c$, the right-hand side term in \eqref{eq-fina3} equals 1, which yields that $\frac{S_{0}(\be)}{S_{1}(\be)}$ goes to 1 as $\be\to+\8$.}

\bigskip
For the proof of other cases we consider $l(\beta) := P(\beta A)/e^{\beta \gamma}$. Then $P(\beta A) = l(\beta) e^{\beta \gamma}$ and $\lim_{\be\to+\8}\frac{1}{\beta}\log(l(\beta)) =0$. It follows that for any constant $\epsilon>0$ we have $e^{-\beta \epsilon} <l(\beta) <e^{\beta \epsilon}$ for sufficiently large $\beta$. With such notation, from \eqref{eq-fina3} we have
\[S_0(\beta)/S_1(\beta)\sim \dfrac{l^2(\beta)e^{2\beta \gamma}+e^{\be a}}{l(\beta)e^{\beta\gamma}+e^{\be a}}\dfrac{l(\beta)e^{\beta\gamma}+e^{\be c}}{l^2(\beta)e^{2\beta \gamma}+e^{\be c}}.\]

Proof of item 2: If $a>c$ and $a+b+d<c$ then
$\frac{a+b+d+c}{2} > a+b+d$ and so $\gamma = \frac{a+b+d+c}{2}$. We have also
\[a>c\Rightarrow a>c+b+d \Rightarrow \frac{a}{2} > \frac{c+b+d}{2} \Rightarrow a > \frac{a}{2}+\frac{c+b+d}{2} \Rightarrow a>\gamma\]
and similarly
\[c>a+ b+d \Rightarrow c>\gamma.\]
As $a>\gamma$ and $c>\gamma$ it follows  that 
$$S_0(\beta)/S_1(\beta)\sim \dfrac{l^2(\beta)e^{2\beta \gamma}+e^{\be a}}{l(\beta)e^{\beta\gamma}+e^{\be a}}\dfrac{l(\beta)e^{\beta\gamma}+e^{\be c}}{l^2(\beta)e^{2\beta \gamma}+e^{\be c}} \to 1$$ and then $\mu_{\beta A +B_\beta} \to \frac{\delta_{0^{\infty}}+\delta_{1^{\infty}}}{2}$.

Proof of item 3: If $a+b+d>c$ then $\frac{a+b+d+c}{2} < a+b+d$ and then $\gamma = a+b+d$. It follows that $c<\gamma<a$. Then, using any auxiliary number $\delta$ such that $c<\delta<\gamma$ and $2\gamma <\delta$ we have
\[S_0(\beta)/S_1(\beta)\sim \dfrac{l^2(\beta)e^{2\beta \gamma}+e^{\be a}}{l(\beta)e^{\beta\gamma}+e^{\be a}}\dfrac{l(\beta)e^{\beta\gamma}+e^{\be c}}{l^2(\beta)e^{2\beta \gamma}+e^{\be c}} \sim 1\cdot \frac{l(\beta)e^{\beta(\gamma-\delta)}+e^{\beta(c-\delta)}}{l^2(\beta)e^{\beta (2\gamma-\delta)}+e^{\beta (c-\delta)}} \to +\infty.\]

Proof of item 4:  As $a+b+d=c$ we have $a+b+d = \frac{a+b+d+c}{2}$ and then $\gamma  =a+b+d = \frac{a+b+d+c}{2}$. In this case $\gamma <a$ and $\gamma =c$. Then we have
$$S_0(\beta)/S_1(\beta)\sim 1\cdot \dfrac{l(\beta)e^{\beta c}+e^{\be c}}{l^2(\beta)e^{2\beta c}+e^{\be c}} \sim \dfrac{l(\beta)+1}{l^2(\beta)e^{\beta c}+1} \sim l(\beta)+1. $$ 
It remains to study the limit for $l(\beta)$.
From \cite{Walters} corollary 3.5 we have (here in the present paper $b_j =b$ and $d_j=d$ for any $j$) 
\[e^{2P(\beta A)}=e^{\beta (b+d)}\left(1+\sum_{j\geq 1}e^{\beta(a_2+...+a_{1+j})-jP(\beta A)}\right)\left(1+\sum_{j\geq 1}e^{\beta(c_2+...+c_{1+j})-jP(\beta A)}\right) .\]
Applying similar computations as above (in order to get \eqref{eq-fina3}) we obtain
\[ e^{2P(\beta A)}\sim e^{\beta (b+d)}\left(1+\frac{e^{\beta a}}{P(\beta A)}\right)\left(1+\frac{e^{\beta c}}{P(\beta A)}\right).\]
Then
\[e^{2P(\beta A)}\sim e^{\beta (b+d)} + \frac{e^{\beta(b+d+c)}}{P(\beta A)} + \frac{e^{\beta(b+d+a)}}{P(\beta A)} + \frac{e^{\beta(b+d+a+c)}}{(P(\beta A))^2}\]
and
\[e^{2P(\beta A)}\sim e^{\beta (b+d)} + \frac{e^{\beta(b+d+c)}}{l(\beta)e^{\beta \gamma}} + \frac{e^{\beta(b+d+a)}}{l(\beta)e^{\beta \gamma}} + \frac{e^{\beta(b+d+a+c)}}{l^2(\beta)e^{2\beta\gamma}}.\]
As $e^{2P(\beta A)}\to 1$ and from the hyphothesis of item 4. we have $\gamma = c = a+b+d = \frac{a+b+d+c}{2}$, we get
\[1\sim  \frac{1}{l(\beta)} + \frac{1}{l^2(\beta)}.\]
As $l(\beta)$ is positive we obtain $l(\beta) {\RL\to} \frac{1+\sqrt{5}}{2}.$ Finally we conclude that $S_0(\beta)/S_1(\beta)\to  \frac{3+\sqrt{5}}{2}$. Consequently $\mu_{\beta A}([0]) \to \frac{3+\sqrt{5}}{5+\sqrt{5}} = \frac{10+2\sqrt{5}}{20}$ and therefore  	$\mu_{\beta A}([1]) \to \frac{2}{5+\sqrt{5}} = \frac{10-2\sqrt{5}}{20}$.
	\end{proof} 	

\begin{remark}
 It is noteworthy that knowing $A$ close to $0^{\8}$ and $1^{\8}$ is not sufficient to determine the selection. This has been already pointed out in \cite{BLM}. Indeed, $b+d$ plays a role to determine the limit. 

However, we point ou that in cases 1 and 2 in Proposition \label{prop:limits measure}, $\ga$ is given by the length-2 cycle $0^{\8}\to 1^{\8}\to 0^{\8}$ in the matrix $M$, and then $\mu_{\8}$ is $\frac12(\de_{0^{\8}}+\de_{1^{\8}})$. In case 3, $\ga$ is given by the length-1 cycle $0^{\8}\to 0^{\8}$ and the limit measure is $\de_{0^{\8}}$. Finally in case 4, $\ga$ is given both by length-1 cycle $0^{\8}\to 0^{\8}$ and length-2 cycle $0^{\8}\to 1^{\8}\to 0^{\8}$, and the limit measure is an asymmetric combination of  $\de_{0^{\8}}$ and $\de_{1^{\8}}$.

In all these cases, $a\ge c$ and $\mu_{\8}$ contains some positive part of $\de_{0^{\8}}$. This confirm that flatness may also be a criteria for selection (see \cite{Leplaideur2}).

\end{remark}

\appendix
{

\section{ About the hypothesis  in Theorem \ref{theoB}} \label{initial}

Consider a Lipschitz function $A:X \to \mathbb{R}$ satisfying $m(A) = 0$ and suppose there exists the limit $\gamma = \lim_{\beta\to+\infty}\frac{1}{\beta}\log(P(\beta A) - h)$, where $h=\sup_{\mu\in \mathcal{M}_{max}(A)}h_{\mu}(\sigma)$. Considering the hypothesis in Theorem \ref{theoB} concerning the perturbation, it is natural to ask what happens if $B_\beta$ converges to zero slower than $e^{\beta \gamma}$. Next example present a discussion in this way and we will show a change in the selection of subaction and measure. We assume, in this example, that the eigenfuntions $H_\beta$ are normalized by $H_\beta(0^{\infty})=1$.

\begin{example}\label{example main}
	
	In this example we suppose $X=\{0,1\}^{\mathbb{N}}$ and consider two numbers $\gamma$ and $\eta$ satisfying $\gamma<\eta<0$. Let $A,B_\beta:\{0,1\}^\mathbb{N} \to \mathbb{R}$ be functions which are constant in each 2-cylinder and defined by  
\[		A(0,1)=A(1,0)=\gamma,\,\ A(0,0) =A(1,1)=0 , \]
\[ B_\beta(0,0) = B_\beta(1,0) = B_\beta(0,1)=0 \,\,\,and \,\,\, B_\beta(1,1) = \log (1+e^{\beta \eta}).\]

\noindent
 We claim that, as $\beta$ goes to $+\infty$:\newline
 1.   $\frac{1}{\beta}\log(P(\beta A)) \to \gamma$;\newline
 2. $\frac{1}{\beta}\log |B_{\beta}|_\infty = \eta $ (then $|B_\beta|_\infty$ seems $e^{\beta \eta}$ with $\eta>\gamma$ );\newline
 3.   $\frac{1}{\beta}\log(H_{\beta A})(1^\infty) \to 0$; \newline 
 4. $\frac{1}{\beta}\log(H_{\beta A+B_\beta})(1^\infty) \to \eta -\gamma $; \newline 
 5. $\mu_{\beta A} \to \frac{1}{2}(\delta_{0^\infty}+\delta_{1^\infty})$; \newline
 6. $\mu_{\beta A+B_{\beta}} \to \delta_{1^\infty}$. \newline
 From items 1-6 clearly the perturbation $B_\beta$ converges to zero exponentially fast, but slower than $e^{\beta \gamma}$, and it changes the results concerning selections of subaction and measure. 

Proof of 1.: {In this case note that Theorem \ref{theoA} holds. 
The matrix $M$ in the Prop. \ref{prop-autoespaco} is then equal to 
$$M=\left(\begin{array}{cc}2\ga & \ga \\\ga & 2\ga\end{array}\right),$$
and its unique eigenvalue for the Max-Plus formalism is $\ga$. 
As entropy for $0^{\8}$ and $1^{\8}$ is zero we get 1. }


\medskip	
Proof of 3.:	the eigenfunction associated to $\lambda_{\beta A}$ depends just of one coordinate and satisfies
	\[\lambda_\beta  H_{\beta A} (0)= L_{\beta A}(H_{\beta A})(0) =1\cdot H_{\beta A} (0) + e^{\beta \gamma}H_{\beta A} (1)  .\]
	Hence, as by convention $H_{\beta A}(0) =1$, we get
	\begin{align*}
		H_{\beta A}(1) &= \frac{\lambda_\beta - 1}{e^{\beta \gamma}},
	\end{align*}
	 with $\l_{\be}=e^{P(\be)}$. Doing $\frac1\be\log$ in this last equality, and $\be\to+\8$ we get 
	 $V(1^{\infty})=\lim_{\beta\to\infty}\frac{1}{\beta}\log(H_{\beta A}(1)) = 0$.
	
\medskip
Proof of 5.: {as $A$ is locally constant we know that $\mu_{\be A}$ converges (see \cite{Bremont, Leplaideur}). Furthermore, symmetry yields $\mu_{\beta A}  \to \frac{\delta_{0^{\infty}}+\delta_{1^{\infty}}}{2}$.}	
	
\medskip
Proof of 2.: as $\log(1+x) = x[1+o(x)]$, for $x$ near to $0$, we get \[\lim_{\beta\to\infty}\frac{1}{\beta}\log(|B_\beta|_\infty) = \eta.\]
	
\medskip
Proof of 4. the associated matrix for the Ruelle operator $L_{\beta A + B_\beta} $ is given by 
	$$\left(\begin{array}{cc} 1 & e^{\beta \gamma}\\ e^{\beta \gamma} & 1+e^{\beta \eta}\end{array}\right) $$
	{with dominating eigenvalue} 
	\begin{equation}\label{eq - 0ex}  \tilde{\lambda}_\beta = 1+\frac{e^{\beta \eta} +\sqrt{ e^{2\beta \eta} + 4e^{2\beta \gamma}}}{2}.
	\end{equation} 
	The associated main eigenfunction  depends just of one coordinate and satisfies
	\[\tilde{\lambda}_\beta  H_{\beta A + B_\beta} (0)=H_{\beta A + B_\beta} (0) + e^{\beta \gamma }H_{\beta A + B_\beta} (1).\]
	Hence, using again that  $H_{\beta A + B_\beta} (0)=1$ by convention, we get
	\begin{equation}\label{eq - 1ex}
		H_{\beta A+B_\beta}(1) = \frac{\tilde{\lambda}_\beta - 1}{e^{\beta \gamma}}=  \displaystyle{\frac{e^{\beta \eta} +\sqrt{ e^{2\beta \eta} + 4e^{2\beta \gamma}}}{2e^{\beta \gamma}}}.
	\end{equation}
	It follows that
	\[\lim_{\beta\to\infty} \frac{1}{\beta}\log( H_{\beta A + B_\beta}(1) )= \lim_{\beta \to\infty}\frac{1}{\beta}\log \left(\frac{e^{\beta \eta} +\sqrt{ e^{2\beta \eta} + 4e^{2\beta \gamma}}}{2e^{\beta \gamma}}\right)=\eta-\gamma>0.\]

Proof of 6.: consider the normalized function $$\overline{\beta A + B_\beta}:= \beta A + B_\beta + \log(H_{\beta A+B_\beta}) - \log(H_{\beta A+B_{\beta}} \circ \sigma) - \log(\tilde{\lambda}_\beta).$$ The associated Ruelle operator is identified with the matrix 
	\[\left( \begin{array}{ccc} 
		\frac{1}{\tilde{\lambda}_\beta} & &\frac{e^{\beta \gamma}}{H_{\beta A+B_\beta}(1)\tilde{\lambda}_\beta} \\ & & \\
		\frac{e^{\beta \gamma}H_{\beta A+B_\beta}(1)}{\tilde{\lambda}_\beta} & &\frac{1+e^{\beta \eta}}{\tilde{\lambda}_\beta} \end{array}\right).\]
	
	In order to study the selection of $\mu_\beta = \mu_{\beta A + B_\beta}$ it is sufficiently to study $p_0^\beta=\mu_\beta ([0])$ and $p_1^\beta = \mu_\beta([1])$. As $p_0^\beta$ and $p_1^\beta$ satisfy
	\[ \left( \begin{array}{ccc} 
		\frac{1}{\tilde{\lambda}_\beta} & &\frac{e^{\beta \gamma}}{H_{\beta A+B_\beta}(1)\tilde{\lambda}_\beta} \\ & & \\
		\frac{e^{\beta \gamma}H_{\beta A+B_\beta}(1)}{\tilde{\lambda}_\beta} & &\frac{1+e^{\beta \eta}}{\tilde{\lambda}_\beta} \end{array}\right) \begin{bmatrix} p_0^\beta \\ \\ p_1^\beta \end{bmatrix} =  \begin{bmatrix} p_0^\beta \\ \\ p_1^\beta \end{bmatrix}\]
	we have
	\[p_0^\beta e^{\beta \gamma}H_{\beta A+B_\beta}(1) + p_1^\beta (1+e^{\beta \eta}) = \tilde{\lambda}_\beta p_1^\beta\]
	and so, using that $p_1^\beta = 1-p_0^\beta$, we get
	\[p_0^\beta = \frac{\tilde{\lambda}_\beta - 1-e^{\beta \eta}}{e^{\beta\gamma}H_{\beta A+B_\beta}(1)+\tilde{\lambda}_\beta - 1-e^{\beta\eta}}\stackrel{\eqref{eq - 1ex}}{=}\frac{\tilde{\lambda}_\beta - 1-e^{\beta \eta}}{2(\tilde{\lambda}_\beta - 1)-e^{\beta\eta}}\]
	\[\stackrel{\eqref{eq - 0ex}}{=} \frac{\frac{-e^{\beta \eta} +\sqrt{ e^{2\beta \eta} + 4e^{2\beta \gamma}}}{2}}{\sqrt{ e^{2\beta \eta} + 4e^{2\beta \gamma}}}=\frac{1}{2}-\frac{e^{\beta \eta}}{2\sqrt{ e^{2\beta \eta} + 4e^{2\beta \gamma}}}.\]
	As $\eta>\gamma$ we conclude that  $\lim_{\beta\to+\infty}p_0^\beta=\frac{1}{2}-\frac{1}{2}=0$ and so
	\[\lim_{\beta \to \infty}\mu_{\beta A + B_\beta} = \delta_{1^{\infty}}. \]

\end{example}

\begin{remark}	It is possible to get similar conclusions as in above example in the case $B_\beta=B$ does not depend of $\beta$ and satisfies
\[ B(0,0) = B(1,0) = B(0,1)=0 \,\,\,and \,\,\, B(1,1)>0. \]
Indeed, just write $B(1,1)= \log (1+e^{\eta})$ where $\eta \in\mathbb{R}$ and replace $e^{\beta \eta}$ by $e^{\eta}$ in above computations in order to get $\lim_{\beta\to\infty} \frac{1}{\beta}\log( H_{\beta A + B_\beta}(1) )=-\gamma>0$ and $\lim_{\beta \to \infty}\mu_{\beta A + B} = \delta_{1^{\infty}}.$ 
\end{remark}

}

\end{document}